\documentclass[11 pt]{amsart}
\usepackage{amsmath, amssymb, amsthm, colonequals, mathtools}

\usepackage{parskip}
\usepackage[T1]{fontenc}
\usepackage{newtxtext, newtxmath}
\usepackage[margin=0.9in]{geometry}
\usepackage{enumitem}
\usepackage[colorlinks=true,linkcolor=red,citecolor=blue,urlcolor=black]{hyperref}

\allowdisplaybreaks

\newtheorem{theorem}{Theorem}[section]
\newtheorem{lemma}[theorem]{Lemma}
\newtheorem{corollary}[theorem]{Corollary}
\newtheorem{proposition}[theorem]{Proposition}
\newtheorem{maintheorem}{Theorem}

\theoremstyle{definition}
\newtheorem{defn}[theorem]{Definition}

\newtheorem{example}[theorem]{Example}
\newtheorem{remark}[theorem]{Remark}
\newtheorem{question}[theorem]{Question}
\newtheorem{construction}[theorem]{Construction}
\newtheorem{conjecturex}[theorem]{Conjecture}
\newenvironment{conjecture}
  {%
   \pushQED{\qed}\begin{conjecturex}}
  {\popQED\end{conjecturex}}
\numberwithin{equation}{theorem}

\def\im{\operatorname{im}}
\def\ker{\operatorname{ker}}

\def\rank{\operatorname{rank}}

\DeclareMathOperator{\width}{width}
\DeclareMathOperator{\gdi}{gdi}

\def\tor{\operatorname{Tor}}

\def\rank{\mathrm{rank}}

\def\FF{\mathbb{F}}
\def\GG{\mathbb{G}}
\def\KK{\mathbb{K}}

\def\NN{\mathbb{N}}

\def\RR{\mathbb{R}}

\def\WW{\mathbb{W}}
\def\ZZ{\mathbb{Z}}
\def\bv{\mathbf{v}}
\def\bw{\mathbf{w}}

\def\ge{\geqslant}
\def\le{\leqslant}
\def\phi{\varphi}
\def\bar{\overline}

\def\to{\longrightarrow}
\def\mapsto{\longmapsto}

\begin{document}
	
	\pagenumbering{arabic}
	
	\title[The Betti numbers and Golodness of numerical semigroup rings of Sally type]{The Betti numbers and Golodness of \\ numerical semigroup rings of Sally type} 
    \author[Manav Batavia]{Manav Batavia}
	\address{Department of Mathematics, Purdue University, West Lafayette, Indiana, USA - 47906.}
	\email{mbatavia@purdue.edu}

	\author[Omkar Javadekar]{Omkar Javadekar}
	\address{Department of Mathematics, Chennai Mathematical Institute, Chennai, India - 603103.}
	\email{omkarjavadekar@gmail.com, omkarj@cmi.ac.in}

    \author[Manohar Kumar]{Manohar Kumar}
	\address{Department of Mathematics, Indian Institute of Technology Madras, Chennai, India - 600036.}
	\email{manhar349@gmail.com}

\subjclass[2020]{Primary 13D02, 20M14; Secondary 13F65, 05E40}
\keywords{Betti numbers, Golod, Poincar\'e series, numerical semigroups}

\begin{abstract}
We determine explicit formulae for the Betti numbers of the defining toric ideals of two families of numerical semigroups of Sally type, resolving and extending beyond several conjectures of Goel--\c{S}ahin--Singh--Srinivasan. Our approach uses Ap\'ery resolutions and their specialisations. For the family
\[
\Gamma_m(n)=\langle m,m+1,\ldots,\widehat{m+n},\ldots,2m-1\rangle,
\qquad 2\leq n<m,
\]
where the hat denotes omission, we also compute the Poincar\'e series of the residue field and characterise Golodness. For fixed multiplicity $m$ and $3\leq n<m$, we show that these rings share the same rational residue field Poincar\'e series, but exhibit arbitrarily late departures from Serre's upper bound.
\end{abstract}

\maketitle

\section{Introduction}

A \emph{numerical semigroup} $S$ is a cofinite additive submonoid of the non-negative integers. Let $\{s_1,\ldots,s_e\}$ be the unique minimal generating set of $S$, where $s_1<\cdots<s_e$. One defines the \emph{multiplicity} of $S$ to be $s_1$ and the \emph{width} of $S$ to be $s_e-s_1$.  

Given a field $\KK$, the numerical semigroup ring $\KK[S]$ is the $\KK$-subalgebra of the polynomial ring $\KK[t]$ generated by the monomials $t^s$ for $s\in S$. Consider the polynomial ring $Q=\KK[x_1,\ldots,x_e]$ endowed with the grading $\deg(x_i)=s_i$. Define the ring homomorphism $\phi:Q\to \KK[S]$ induced by $x_i\mapsto t^{s_i}$. The kernel of $\phi$, denoted by $I_S$, is called the \emph{defining toric ideal} of $S$. 

Despite their explicit monomial descriptions, numerical semigroup rings can exhibit complicated homological behaviour. Their Betti numbers over the presenting polynomial ring $Q$ can be computed using the homology of simplicial complexes associated to $S$ \cite{CampilloMarijuan, BrunsHerzog}, and may depend on the characteristic of $\KK$ \cite[Theorem 2.1]{BrunsHerzog}. The number of generators of the defining ideal is unbounded even among numerical semigroups of embedding dimension four \cite{Bresinsky}. See \cite{Stamate} for a detailed survey on $\beta_i^Q(\KK[S])$, and \cite{CMS,EHOOPK,DGSSS,gsss25} for recent progress. The resolutions of the residue fields of numerical semigroup rings exhibit further complexity. For instance, the Poincar\'e series $\mathcal{P}_\KK^{\KK[S]}(z)=\sum_{i\geq0}\beta_i^{\KK[S]}(\KK)z^i$ need not be rational; explicit examples with transcendental Poincar\'e series are constructed in \cite{FrobergRoos, LLR}.

A related problem is to determine when the Betti numbers of the residue field attain Serre's upper bound. Writing $R=\KK[S]$, this happens precisely when
\[
\mathcal{P}_\KK^R(z)=\frac{(1+z)^e}{1-z(\mathcal{P}_R^Q(z)-1)},
\]
in which case $R$ is called \emph{Golod}. Numerical semigroup rings of maximal embedding dimension are Golod \cite[Corollary 3.7]{gosd24}, but this property does not hold for numerical semigroup rings in general, as evidenced by the examples with transcendental Poincar\'e series cited above. 

A numerical semigroup is of \emph{Sally type} if $\width(S)=s_1-1$, where $s_1$ is the multiplicity of $S$. The term was introduced in \cite{DGSSS}, although these semigroups can be traced back to the work of Sally on one-dimensional local rings \cite{s80} and have since been studied from several algebraic and combinatorial perspectives. In recent years, there has been renewed interest in these semigroups, including the works \cite{bcj25, DGSSS, gsss25, ss26}. We consider the families
\[
\Gamma_m(n)=\langle\{m,m+1,\ldots,2m-1\}\setminus\{m+n\}\rangle
\]
and
\[
\Gamma_m(2,n)=\langle\{m,m+1,\ldots,2m-1\}\setminus\{m+2,m+n\}\rangle,
\]
with $2\leq n<m$ and $4\leq n<m$, respectively (cf. Definition \ref{defn:Sallytype}). These are of Sally type when $n<m-1$; our results also include the endpoint $n=m-1$, where the width is smaller. In \cite{gsss25}, the authors construct minimal free resolutions of $\KK[S]$ for $S\in\{\Gamma_m(1),\:\Gamma_m(2),\:\Gamma_m(2,3),\:\Gamma_m(3,4)\}$ and make numerous conjectures describing the Betti numbers of other $\Gamma_m(n)$ and $\Gamma_m(n_1,n_2)$.

For the families considered here, we obtain explicit formulae for the Betti numbers of $\KK[S]$. For the one-gap family $\Gamma_m(n)$, we also determine the Poincar\'e series of the residue field and characterise Golodness. All our results are independent of the characteristic of the ground field $\KK$. Our first theorem resolves several of the conjectures in \cite{gsss25} (cf. Conjecture \ref{conj:GSSSconj}).

\begin{maintheorem}(cf. Theorems \ref{thm:defining-ideal-for-n}, \ref{thm:defining-ideal-for-24-25} and \ref{thm:defining-ideal-for-2n})\label{thm:mainA}
    Let $2\leq n<m.$
    \begin{enumerate}[leftmargin=*, label=(\alph*)]
        \item Let $S=\Gamma_m(n)$. Then $\beta_0(Q/I_S)=1$ and for $d\geq 1$, 
        \[
        \beta_{d}(Q/I_S) = d\binom{m}{d+1} - m\binom{m-2}{d-1} + \binom{m-n}{d-n+1}.
        \]
        \item Let $S=\Gamma_m(2,n)$ for $n=4,5$. Then $\beta_0(Q/I_S)=1$ and for $d\geq 1$, 
        \[
        \beta_d(Q/I_S)=d\binom{m-2}{d+1}.
        \]
        \item Let $S=\Gamma_m(2,n)$ for $n\geq6$. Then $\beta_0(Q/I_S)=1$ and for $d\geq1$,
        \[
        \beta_d(Q/I_S)=
        d\binom{m-2}{d+1}
        -\binom{m-3}{d-1}
        +\binom{m-n+2}{d-n+4}.
        \]
    \end{enumerate}
\end{maintheorem}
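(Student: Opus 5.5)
The plan is to compute the Betti numbers through the Apéry set of $S$ with respect to the multiplicity $m$, as the abstract suggests. Put $x_0=x_m$ for the variable of degree $m$ and $\bar R=\KK[S]/(t^m)$. Since $t^m$ is a nonzerodivisor on $\KK[S]$, we have $\beta_d^Q(\KK[S])=\beta_d^{\bar Q}(\bar R)$, where $\bar Q=Q/(x_0)$. The ring $\bar R$ is Artinian and has a monomial $\KK$-basis indexed by $\mathrm{Ap}(S,m)$.

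For $S=\Gamma_m(n)$ with $n<m-1$, the Apéry set is $\{0\}\cup\{m+i: 1\le i\le m-1,\ i\ne n\}\cup\{2m+n\}$. So $\bar R$ has socle degree $2$ and Hilbert function $(1,m-2,1)$. Concretely, $\bar R=\bar Q/J$, where $\bar Q$ has $m-2$ variables $y_i$ ($i\in[1,m-1]\setminus\{n\}$), and $J$ contains all quadrics $y_iy_j$ except those combining to $y_ay_{n-a}$ with $a+b=n$, all identified to the socle generator. Hence $\bar R$ is a graded Artinian ring of socle degree 2 with $h=(1,m-2,1)$, but \emph{not} Gorenstein. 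The socle is bigger: every $y_i$ with $i>n$ or $n-i\notin[1,m-1]\setminus\{n\}$ annihilates $\mathfrak m$. The count of such linear socle elements involves $m-n$, which is where the correction term $\binom{m-n}{d-n+1}$ should come from.

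I would therefore decompose. Let $U$ be the span of the variables $y_i$ with $1\le i<n$; these pair up as $y_iy_{n-i}$. Let $W$ be the span of the remaining $m-n$ variables, which are annihilated modulo $J$ (their pairwise products land outside the Apéry set). Then $\bar R\cong A\times_\KK \KK[W]/(W)^2$, a fibre product over $\KK$. Here $A=\KK[U]/J_U$ is Gorenstein of socle degree $2$ with $\dim U=n-1$ (a rank-$(n-1)$ quadratic form, up to the middle variable when $n$ is even). The Betti numbers over $\bar Q$ of such a fibre product can be computed via Dress–Krämer style formulas, or directly by Apéry-resolution specialisation as in the paper. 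The resolutions of Gorenstein algebras with $h=(1,g,1)$ are known: $\beta_d=d\binom{g+1}{d+1}\cdot\frac{g+2-d}{\ldots}$ type formulas, namely $\beta_d=\frac{d(g+2-d)}{g+1}\binom{g+2}{d+1}$ for $1\le d\le g-1$. The ring $\KK[W]/(W)^2$ has the Eagon–Northcott Betti numbers $d\binom{m-n+d}{d+1}$. One combines these via the fibre product formula $\beta^{\bar Q}(A\times_\KK B)$ in terms of $\beta(A),\beta(B)$ and the Koszul complexes on the complementary variables. Then one verifies the resulting binomial sum equals $d\binom{m}{d+1}-m\binom{m-2}{d-1}+\binom{m-n}{d-n+1}$ by Vandermonde identities.

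The endpoint $n=m-1$ gives Apéry set degrees $\le 2m-2$ and $2m+n=3m-1$. This fits the same scheme with $W=0$, and must be checked separately. For $\Gamma_m(2,n)$ the analogous reduction applies. The generators are $y_i$ for $i\in[1,m-1]\setminus\{2,n\}$. The pairing is now on $a+b\in\{2,n\}$. The sum $2$ is only $1+1$, giving a square $y_1^2$ equal to the element of degree $2m+2$. The sum $n$ gives the pairs as before, but missing $a=2$ and $a=n-2$. So the socle in degree $2$ may now be two-dimensional, and $\bar R$ is a fibre/connected-sum type algebra of $\KK[y_1]/(y_1^3)$-like pieces. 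The cases $n=4,5$ are exactly those where the $n$-pairing is degenerate (only $1+3$, or $1+4$ and $2+3$ with $2$ missing), which explains the clean formula $d\binom{m-2}{d+1}$.

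The main obstacle is handling the Apéry-set elements of order $\ge 3$. For instance, $2m+2$ in $\Gamma_m(2,n)$, or whether $2m+n$ is reached only as a sum of two generators, must be checked so that $\bar R$ truly has socle degree $2$. The rest is careful bookkeeping of the fibre-product Betti formula and the binomial simplification, which I expect to be tedious but routine.
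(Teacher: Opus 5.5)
\textbf{Verdict.} Your reduction modulo $t^m$ is sound, and it is a genuinely different route from the paper's. The paper works with the Ap\'ery resolution of $T/J_S$, computes the ranks of its reduced differentials (Lemma \ref{lem:alpha-gives-betti-nos}), and passes to $Q/I_S$ via Proposition \ref{prop:tensorwithKoszul}. Your route does prove part (a), once its inputs are corrected. Parts (b) and (c) are not actually addressed.

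\textbf{Part (a).} The identification $\bar R\cong A\times_\KK B$ is correct, with $A$ Gorenstein of Hilbert function $(1,n-1,1)$ and $B=\KK[W]/(W)^2$. However, the inputs you quote are wrong, so the final binomial check would fail as written.
\begin{itemize}
\item $W$ is spanned by $y_{n+1},\dots,y_{m-1}$, so $w=\dim W=m-n-1$, not $m-n$.
\item $\beta_d(B)=d\binom{m-n}{d+1}$.
\item A Gorenstein algebra with $h=(1,g,1)$ has $\beta_d=\frac{d(g-d)}{g+1}\binom{g+2}{d+1}$ for $1\le d\le g-1$. Your formula gives $\beta_1=6$ when $g=2$.
\item Dress--Kr\"amer concerns $\mathcal P_\KK$ of a fibre product, not its Betti numbers over $\bar Q$.
\end{itemize}
The right tool is the Tor sequence of $0\to A\times_\KK B\to A\oplus B\to\KK\to 0$ over $\bar Q$. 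Both defining ideals lie in the squares of the maximal ideals, so the map $\operatorname{Tor}_i(A)\oplus\operatorname{Tor}_i(B)\to\operatorname{Tor}_i(\KK)$ has image $\bigwedge^iU\oplus\bigwedge^iW$ for $i\ge1$. Writing $u=\dim U$ and letting $\mathcal P_A,\mathcal P_B$ be the Betti series over $\KK[U]$ and $\KK[W]$, this gives
\[
\mathcal P^{\bar Q}_{\bar R}(z)=1+\bigl(\mathcal P_A(z)-1\bigr)(1+z)^{w}+\bigl(\mathcal P_B(z)-1\bigr)(1+z)^{u}+\frac{\bigl((1+z)^{u}-1\bigr)\bigl((1+z)^{w}-1\bigr)}{z}.
\]
With the corrected inputs this simplifies exactly to the series in Theorem \ref{thm:defining-ideal-for-n}, for all $2\le n<m$. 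Take $A=\KK[y_1]/(y_1^3)$ when $n=2$, and $w=0$ when $n=m-1$.

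\textbf{Parts (b) and (c): the gap.} You give no computation here, and the obstacle you single out is not the real one. $\bar R$ has socle degree $2$ trivially: any product of three generators other than $x_0$ has degree at least $3m+3>2m+n=\max\operatorname{Ap}(S)$. What is actually needed is to identify and resolve the nontrivial factor.

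One has $\bar R\cong A'\times_\KK\KK[W]/(W)^2$. Here $A'$ lives on the $y_i$ with $i\in\{1,\dots,n-1\}\setminus\{2,n-2\}$, and $W$ is spanned by $y_{n-2}$ (when $n\ge5$) together with $y_{n+1},\dots,y_{m-1}$.

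For $n=4,5$ one gets $A'\cong\KK[a,b]/(b^2,a^2b,a^3)$ and $\dim W=m-5$ in both cases, so the two Artinian reductions are isomorphic. Since $A'$ has the same Betti numbers $(1,3,2)$ as $\KK[a,b]/(a,b)^2$, the displayed formula gives those of $\KK[x_1,\dots,x_{m-3}]/(x_1,\dots,x_{m-3})^2$, namely $d\binom{m-2}{d+1}$.

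For $n\ge6$, however, $A'$ has Hilbert function $(1,n-3,2)$ and is neither Gorenstein nor a nontrivial fibre product. To finish you would have to prove
\[
\beta_d(A')=d\binom{n-2}{d+1}-\binom{n-3}{d-1}+\binom{2}{d-n+4},
\]
for instance $(1,5,6,2)$ when $n=6$. This is precisely where the terms $-\binom{m-3}{d-1}$ and $\binom{m-n+2}{d-n+4}$ in (c) come from. Proving it requires a real argument, for example controlling the connecting maps in the Tor sequence of $0\to\KK(-2)\to A'\to A'/(y_1^2)\to 0$, where $A'/(y_1^2)$ is Gorenstein with $h=(1,n-3,1)$. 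Nothing in your sketch supplies this step.
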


These formulae also relate to bounds in terms of the width of a numerical semigroup \cite{HerzogStamate, CMS}. Caviglia, Moscariello, and Sammartano \cite[Conjecture 1.3]{CMS} conjectured that
\begin{equation}\label{eq:bettibound}
\beta_d^Q(\KK[S])\leq d\binom{\width(S)+1}{d+1} \quad \forall d\geq1,
\end{equation}
and proved the conjecture for $e=4$ and $\width(S)\geq40$. The formulae in Theorem \ref{thm:mainA} verify the conjectured bound \eqref{eq:bettibound}.

We next turn to the minimal free resolution of the residue field. Although this resolution is infinite, we show that its ranks are governed by a rational Poincar\'e series with quadratic denominator. Moreover, for fixed $m$, this series is independent of $n$ throughout the range $3\leq n<m$.

\begin{maintheorem}(cf. Theorem \ref{thm:residue-betti-gamma-n})\label{thm:mainB}
    Let $3 \leq n <m$ and $S=\Gamma_m(n)$, and let $R=\KK[S]$. Then the Poincar\'e series $\mathcal P_{\KK}^R(z) = \sum_{i\geq 0}\beta_i^R(\KK)z^i$ of the residue field $\KK$ is given by 
    \[
    \mathcal P_{\KK}^R(z) 
    =\frac{1+z}{1-(m-2)z+z^2}.
    \]
\end{maintheorem}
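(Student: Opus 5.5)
The plan is to reduce to an Artinian ring by going modulo the multiplicity element, and then to recognise that Artinian ring as a fibre product of two rings whose Poincaré series are classical. Write $R=\KK[S]$ with maximal homogeneous ideal $\mathfrak m$ and $x=t^m$. Since $R$ is a one-dimensional domain, $x$ is a nonzerodivisor, and $x$ is a minimal generator, so $x\in\mathfrak m\setminus\mathfrak m^2$. The standard change-of-rings result for a regular element outside $\mathfrak m^2$ gives $\mathcal P_\KK^{R/xR}(z)=(1+z)\,\mathcal P_\KK^R(z)$. So it suffices to show
\[
\mathcal P_\KK^{A}(z)=\frac{(1+z)^2}{1-(m-2)z+z^2},\qquad A=R/xR .
\]

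\textbf{Step 1: describe $A$.} The ring $A$ has $\KK$-basis the monomials $t^w$ with $w$ in the Apéry set of $S$ with respect to $m$. For $S=\Gamma_m(n)$ this Apéry set is $\{0\}\cup\{m+a: 1\le a\le m-1,\ a\ne n\}\cup\{2m+n\}$. The element $2m+n$ lies in $S$ because $n\ge2$ allows $2m+n=(m+a)+(m+n-a)$ with $1\le a\le n-1$, and $m+n\notin S$. Hence $A$ is standard graded with Hilbert function $(1,m-2,1)$ and $\mathfrak m_A^3=0$.

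\textbf{Step 2: compute the multiplication on $A_1$.} The product $t^{m+a}t^{m+b}$ is nonzero in $A$ exactly when $2m+a+b$ is an Apéry element, that is, when $a+b=n$. If instead $a+b=m+n$, the product equals $t^{3m+n}=x\cdot t^{2m+n}$, which is $0$ in $A$. Therefore:
\begin{itemize}
\item the variables $y_a=t^{m+a}$ with $a>n$ lie in the socle;
\item the variables $y_a$ with $1\le a\le n-1$ pair perfectly via $y_ay_{n-a}=\mathrm{soc}$.
\end{itemize}
This yields a fibre product decomposition $A\cong B\times_\KK C$. Here $B$ is generated by $y_1,\dots,y_{n-1}$; it is a Gorenstein (compressed) ring with Hilbert function $(1,n-1,1)$, and its multiplication is given by a nondegenerate symmetric pairing. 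The other factor is $C=\KK[y_{n+1},\dots,y_{m-1}]/(y)^2$, on $m-1-n$ variables. When $n=m-1$, $C=\KK$ and nothing changes.

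\textbf{Step 3: combine the known series.} For $n\ge3$ we have $\operatorname{edim}B=n-1\ge2$. A Gorenstein local ring with $\mathfrak m^3=0$ and embedding dimension at least $2$ has $\mathcal P_\KK^B=1/(1-(n-1)z+z^2)$ (Sjödin / Avramov–Levin). For $C$, with $\mathfrak m_C^2=0$, we have $\mathcal P_\KK^C=1/(1-(m-1-n)z)$. The Dress–Krämer formula for fibre products,
\[
\frac1{\mathcal P^{A}}=\frac1{\mathcal P^B}+\frac1{\mathcal P^C}-1,
\]
then gives $1/\mathcal P^A_\KK=1-(m-2)z+z^2$. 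Combining this with the reduction from the first paragraph produces the stated formula. It also explains why the result is independent of $n$, and why $n=2$ is excluded: there $B=\KK[y]/(y^3)$ has a different series.

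\textbf{Main obstacle.} The main difficulty is Step 2, namely checking rigorously that the presentation of $A$ really splits as that fibre product. One has to verify that every relation among the $y_a$ is generated by the following:
\begin{itemize}
\item $y_ay_b$ for mixed or socle pairs;
\item $y_ay_b-y_{a'}y_{b'}$ for $a+b=a'+b'=n$;
\item $y_ay_b$ with $a+b\ne n$ inside $B$;
\item all cubes.
\end{itemize}
I would do this by a dimension count against the Hilbert function $(1,m-2,1)$. The relations listed kill every degree-two monomial except one class, and the Hilbert function shows that this class is not killed, so there are no further relations.
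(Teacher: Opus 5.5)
Your route differs from the paper's and is sound in substance. However, your first reduction is stated backwards, and as written the proof does not reach the theorem. For a homogeneous nonzerodivisor $x\in\mathfrak m\setminus\mathfrak m^2$ one has $\mathcal P_\KK^{R}(z)=(1+z)\,\mathcal P_\KK^{R/xR}(z)$, not $\mathcal P_\KK^{R/xR}(z)=(1+z)\,\mathcal P_\KK^{R}(z)$; you can test this on $R=\KK[x]$, where $R/xR=\KK$. So your intermediate target $\mathcal P_\KK^A(z)=(1+z)^2/(1-(m-2)z+z^2)$ is wrong, and it is not what your Step~3 delivers. Dress--Kr\"amer gives $\mathcal P_\KK^A(z)=1/(1-(m-2)z+z^2)$. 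Combined with the reduction as you stated it, this would give $1/\bigl((1+z)(1-(m-2)z+z^2)\bigr)$, so your final sentence is false as written. With the correct direction, Step~3 yields exactly $\mathcal P_\KK^R(z)=(1+z)/(1-(m-2)z+z^2)$. As a check, take $m=4$, $n=3$: then $A=\KK[y_1,y_2]/(y_1^2,y_2^2)$ with $\mathcal P_\KK^A=1/(1-z)^2$. Also $\KK[\langle 4,5,6\rangle]$ is a codimension-two complete intersection with $\mathcal P_\KK^R=(1+z)/(1-z)^2$.

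With that fixed, the rest holds, and the step you call the main obstacle is easier than you suggest. The Ap\'ery basis $\{1\}\cup\{y_a: a\neq n\}\cup\{s=t^{2m+n}\}$ of $A$, together with your observation that $y_ay_b=s$ if $a+b=n$ and $y_ay_b=0$ otherwise, already determines $A$ as a $\KK$-algebra. Thus $\mathfrak m_A=V_1\oplus V_2\oplus\KK s$ with $V_1=\langle y_1,\dots,y_{n-1}\rangle$ and $V_2=\langle y_{n+1},\dots,y_{m-1}\rangle$. Here $V_1V_2=V_2^2=0$ and the pairing on $V_1$ is antidiagonal, hence nondegenerate. This is exactly $B\times_\KK C$, and no presentation or dimension count is needed. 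The formula $\mathcal P_\KK^B=1/(1-(n-1)z+z^2)$ follows from Levin--Avramov, since $B\to B/\operatorname{soc}B=B/\mathfrak m_B^2$ is Golod when $\operatorname{edim}B\geq 2$. Both this result and Dress--Kr\"amer are characteristic-free.

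The paper never passes to $R/t^mR$. It takes the non-minimal infinite Ap\'ery resolution of $\KK$ over $R$ and encodes the linear relations among the nonzero rows of the reduced differentials by the word complex $\WW$. It proves $\WW$ exact (Lemma~\ref{lem:residue-word-exactness}) via a filtration whose associated graded pieces are augmented chain complexes of simplices. This gives the alternating-sum formula for $\alpha_d$, and the series then follows from generating functions and Lemma~\ref{lem:alpha-gives-betti-nos}.

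Your argument is much shorter and more conceptual. It explains why the series is independent of $n$ for $n\geq 3$: only $\operatorname{edim}B=n-1\geq 2$ and $\operatorname{edim}B+\operatorname{edim}C=m-2$ enter. It also recovers Theorem~\ref{thm:Poincare-series-for-n=2}, since for $n=2$ one has $B=\KK[y]/(y^3)$ with $\mathcal P_\KK^B=1/(1-z)$. The cost is reliance on two substantial external theorems. The paper's computation is self-contained given the Ap\'ery resolution, is uniform with its treatment of the defining ideals, and shows exactly where that resolution fails to be minimal.
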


Theorem \ref{thm:mainB} determines the Betti numbers of the residue field for our family in arbitrary multiplicity, agreeing in multiplicity $5$ with those predicted by the conjectured minimal free resolutions in \cite{CFJJM}.

We also show that the rings $\KK[\Gamma_m(n)]$ are rarely Golod. However, it is interesting to note that for a fixed $m$, these rings exhibit arbitrarily late first departures from Serre's bound, even though the Poincar\'e series of their residue fields have a uniform rational form. To describe this phenomenon, for a non-negatively graded $\KK$-algebra $R=Q/I$ with $Q=\KK[x_1,\ldots,x_e]$ and $I\subset(x_1,\ldots,x_e)^2$, we define the $i^{\text{th}}$ \emph{Golod defect number} $\mathcal{D}_i(R)$ to be the coefficient of $z^i$ in the power series
\[
    \frac{(1+z)^e}{1-z(\mathcal{P}_R^Q(z)-1)}-\mathcal{P}_\KK^R(z),
\]
and the \emph{Golod defect index} of $R$ to be 
\[
\gdi(R)\colonequals\inf\big(\{i\mid \mathcal{D}_i(R)>0\}\cup\{\infty\}\big).
\]

\begin{maintheorem}(cf. Theorem \ref{thm:golod})\label{thm:mainC}
    Let $m\geq4$, $2\leq n<m$, $S=\Gamma_m(n)$, and $R=\KK[S]$. Then $R$ is Golod iff $n=2$. Moreover, if $n\geq3$, then $\gdi(R)=n$ and $\mathcal{D}_n(R)=1$.
\end{maintheorem}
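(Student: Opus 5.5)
We will deduce Theorem \ref{thm:mainC} from Theorems \ref{thm:mainA}(a) and \ref{thm:mainB} together with the case $n=2$, by comparing the Serre bound series with $\mathcal P_\KK^R(z)$ coefficient by coefficient. Here $e=m-1$. Set $P(z)=\mathcal P_R^Q(z)=\sum_d\beta_d z^d$, with $\beta_d$ as in Theorem \ref{thm:mainA}(a). The key observation is that the $n$-dependence of $\beta_d$ sits entirely in the term $\binom{m-n}{d-n+1}$, which vanishes for $d\le n-2$. So we write
\[
P(z)=A(z)+z^{n-1}(1+z)^{m-n},\qquad A(z)=1+\sum_{d\ge1}\Big(d\tbinom{m}{d+1}-m\tbinom{m-2}{d-1}\Big)z^d .
\]

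\textbf{Step 1 (the case $n=2$).} For $n=2$ we show that $R$ is Golod. Either this follows from the Golodness criterion for maximal embedding dimension type rings \cite{gosd24}, if applicable, or we compute directly. For the direct route, we first obtain $\mathcal P_\KK^R$ for $n=2$ by the same method as Theorem \ref{thm:residue-betti-gamma-n}, or by checking that the Apéry resolution carries a trivial Massey product structure. We then verify the identity $(1+z)^{m-1}\big/\big(1-z(P(z)-1)\big)=\mathcal P^R_\KK(z)$. A useful sanity check is that for $n=2$ we get $P(z)-1 = A(z)-1+z(1+z)^{m-2}$, and a closed form of $A$ is obtained from $\sum_d d\binom{m}{d+1}z^d=\frac{d}{dz}\!\left(\frac{(1+z)^m-1}{z}\right)z$-type manipulations. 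The resulting rational function should reduce to a quadratic-denominator form analogous to Theorem \ref{thm:mainB}.

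\textbf{Step 2 (the case $n\ge3$).} Define the Golod bound series
\[
G_n(z)=\frac{(1+z)^{m-1}}{1-z(P(z)-1)}.
\]
Since $P$ depends on $n$ only through $z^{n-1}(1+z)^{m-n}$, the denominator $1-z(P-1)$ agrees with the $n=2$-free part $1-z(A-1)$ modulo $z^{n}$. To first order, the extra term contributes $+z^{n}$ times (constant term $1$) to $z(P-1)$, hence it contributes $+z^n$ to $G_n$ modulo $z^{n+1}$. Next, let $G_\infty=(1+z)^{m-1}/(1-z(A-1))$. We claim that
\[
G_\infty(z)=\frac{1+z}{1-(m-2)z+z^2}=\mathcal P^R_\KK(z)
\]
by Theorem \ref{thm:mainB}. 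To prove the claim, note that $A$ equals $P$ for $n=m$ formally, with the extra term then being $z^{m-1}$, or compute $A$ in closed form and check the rational identity. Granting it, we get $G_n-\mathcal P_\KK^R \equiv z^n \pmod{z^{n+1}}$. Therefore $\mathcal D_i(R)=0$ for $i<n$ and $\mathcal D_n(R)=1$, so $\gdi(R)=n$ and in particular $R$ is not Golod.

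\textbf{Main obstacle.} The crux is the closed-form identity $(1+z)^{m-1}(1-(m-2)z+z^2)=(1+z)\big(1-z(A(z)-1)\big)$, i.e.\ computing $1+z-zA(z)$ as $(1+z)^{m-2}(1-(m-2)z+z^2)$. I would attack it by generating functions, using
\[
\sum_d d\tbinom{m}{d+1}z^d=\frac{mz(1+z)^{m-1}-(1+z)^m+1}{z},\qquad \sum_d m\tbinom{m-2}{d-1}z^d=mz(1+z)^{m-2},
\]
and simplifying. If the identity instead produces a small polynomial correction, for example at degree $m-1$, then the endpoint terms must be tracked. This would also verify the case $n=2$, where the extra term $z(1+z)^{m-2}$ should cancel against the Golod-numerator behaviour. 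The hypothesis $m\ge4$ is needed so that $n=3$ is a valid case.
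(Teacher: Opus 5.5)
Your treatment of $n\geq 3$ is correct and is essentially the paper's argument. The identity $1+z-zA(z)=(1+z)^{m-2}\bigl(1-(m-2)z+z^2\bigr)$ holds exactly, so $1-z(P-1)=(1+z)^{m-2}\bigl(1-(m-2)z+z^2\bigr)-z^n(1+z)^{m-n}$. Your expansion modulo $z^{n+1}$ then gives $\mathcal D_i(R)=0$ for $i<n$ and $\mathcal D_n(R)=1$. The paper instead writes the difference exactly, as $z^n(1+z)$ over a denominator with constant term $1$.

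The gap is Step 1, the implication ``$n=2\Rightarrow R$ Golod''. It requires knowing $\mathcal P_\KK^R(z)$ for $\Gamma_m(2)$, and none of your suggested routes supplies it.
\begin{itemize}
\item The maximal embedding dimension criterion does not apply, since $\Gamma_m(2)$ has embedding dimension $m-1<m$.
\item The method of Theorem \ref{thm:mainB} fails at $n=2$. Lemma \ref{lem:residue-word-exactness} uses $n\geq 3$ so that occurrences of $1(n-1)$ cannot overlap, which makes the words with a fixed full expansion form a simplex. For $n=2$ the subword is $11$, occurrences overlap, and $\WW$ is not exact. With $\ell=4$ one has $\delta(112)=-1111$ and $\delta(121)=1111$, so $112+121$ is a cycle. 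The only boundary in that expansion class is $\delta(22)=112-211$, so this cycle is not a boundary.
\item If the rank formula of Theorem \ref{thm:residue-rank-formula} did hold for $n=2$, the counts $N_{d,t}$ (which do not depend on $n$) would again give $(1+z)/(1-(m-2)z+z^2)$. Comparing with the Serre bound would then give $\mathcal D_2(R)=1$, contradicting the general fact $\mathcal D_2=0$.
\item ``Trivial Massey products'' is a restatement of Golodness, not an argument. It also concerns Koszul homology, not the Ap\'ery resolution.
\end{itemize}

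Your expectation of a quadratic denominator for $n=2$ is also wrong. The correct series is $(1+z)/(1-(m-2)z)$, which the paper imports from \cite[Subsection 4.2]{gosd24} as Theorem \ref{thm:Poincare-series-for-n=2}. With that input, your own closed form settles the case. For $n=2$ one gets $1-z(P-1)=(1+z)^{m-2}\bigl(1-(m-2)z\bigr)$, so the Serre bound equals $(1+z)/(1-(m-2)z)=\mathcal P_\KK^R(z)$.
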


We now outline the strategy for proving our results. Let $m$ be the multiplicity of $S$. The \emph{Ap\'ery set} $\operatorname{Ap}(S)=\{0,a_1,\ldots,a_{m-1}\}$ consists of the smallest element of $S$ in each residue class modulo $m$. It gives rise to the \emph{Ap\'ery toric ideal} $J_S$ in $T=\KK[x_0,\ldots,x_{m-1}]$ (cf. Section \ref{sec:preliminaries}). Our approach uses the finite Ap\'ery resolutions of Braun--Gomes--Miller--O'Neill--Sobieska \cite{bgmns25} and the infinite Ap\'ery resolutions of Gomes--O'Neill--Sobieska--Torres D\'avila \cite{gosd24}. These resolutions are generally nonminimal for the families considered here. We determine the required Betti numbers by computing the ranks of their differentials after reduction modulo the homogeneous maximal ideal, using Lemma \ref{lem:alpha-gives-betti-nos}.

For the finite resolutions, we determine these ranks by analysing the positions and dependencies of the unit entries. This gives the Betti numbers of $T/J_S$. The Koszul tensor-product relationship between the resolutions of $T/J_S$ and $Q/I_S$ \cite[Proposition 4.1]{bgmns25} then yields the formulae in Theorem \ref{thm:mainA}.

For the resolutions of the residue field, we determine the ranks of the reduced differentials by describing the linear relations among their nonzero rows. We show that these relations are generated by those arising from the preceding differential, subject to suitable restrictions on the row labels. This yields the recursive rank formula in Theorem \ref{thm:residue-rank-formula}, from which we obtain the Poincar\'e series in Theorem \ref{thm:mainB} and consequently the Golodness results of Theorem \ref{thm:mainC}.

The computations also extend beyond the original Sally type families. Ap\'ery data organise numerical semigroups into faces of a Kunz cone. In Section \ref{sec:applications}, we give explicit families of numerical semigroups lying in the relative interiors of the same faces as our families, to which the corresponding Betti number formulae apply. We also construct gluings with $\NN$ and determine the resulting Betti numbers.

The article is structured as follows. In Section \ref{sec:preliminaries}, we collect necessary definitions, set up the notation we use throughout, and prove a key general lemma for extracting the Betti numbers from a given free resolution. In Section \ref{sec:definingideals}, we obtain the Betti numbers of $J_S$ and $I_S$ for $S=\Gamma_m(n)$ and $S=\Gamma_m(2,n)$. In Section \ref{sec:residue-field-gamma-n}, we compute the Betti numbers of $\KK$ over $\KK[S]$ for $S=\Gamma_m(n)$. We then present some applications of our results in Section \ref{sec:applications}, and conclude by posing further questions and conjectures in Section \ref{sec:questions}.

\textbf{Acknowledgements.}
We would like to thank Jonathan Monta\~{n}o for introducing us to resolutions of numerical semigroups via Apéry sets. We are also grateful to David Eisenbud and Victor Reiner for helpful discussions.

This work was initiated when M.K. was visiting Arizona State University (ASU). He thanks ASU and Jonathan Monta\~{n}o for their warm hospitality. 

M.B. was supported by NSF Grants DMS-2302430 and DMS-2100288, and by Simons Foundation Grant SFI-MPS-TSM-00012928. O.J. acknowledges support from a Postdoctoral Fellowship at the Chennai Mathematical Institute, as well as additional support from the Infosys Foundation. 

\textbf{AI disclosure.} We used ChatGPT 5.6 Sol as a collaborative tool to prove the exactness of the vector space complex in Lemma \ref{lem:residue-word-exactness}; specifically, the identification of the associated graded complex with an augmented simplicial chain complex originated from the AI tool. ChatGPT was also used for literature searches, and to locate typographical errors in the late stages of this draft. Apart from these uses, all mathematical arguments and the writing of the manuscript are our own. We take full responsibility for its content and correctness.

\section{Preliminaries}\label{sec:preliminaries}
\subsection{Numerical semigroups and their invariants} A numerical semigroup $S$ is a subset of the set $\ZZ_{\geq 0}$ of all non-negative integers such that $0 \in S$, $S$ is closed under addition, and $\ZZ_{\geq 0}\setminus S$ is finite. The smallest nonzero element of $S$ is called the \emph{multiplicity} of $S$. Throughout this article, we use $m$ to denote the multiplicity of $S$. 
It is easy to see that every numerical semigroup has a unique minimal finite generating set, which we denote by $\mathcal{A}(S)$. If $\mathcal{A}(S)=\{g_1,\ldots,g_s\}$ where $g_1<\cdots<g_s$, then the number $g_s-g_1$ is called the \emph{width of $S$}, and we denote it by $\width(S)$. The set 
\[
\operatorname{Ap}(S)\colonequals\{n\in S \mid n-m \notin S\}
=\{0,a_1,\ldots,a_{m-1}\}
\]
is called the \emph{Ap\'ery set} of $S$. In other words, the Ap\'ery set consists of the minimal element of $S$ in each congruence class modulo $m$, that is, each $a_i$ is the minimal element of $S$ satisfying $a_i \equiv i \pmod m$. In general, $\mathcal{A}(S)\subset\{m,a_1,\ldots,a_{m-1}\}$.

For convenience, we set $a_0=m$. Note that in particular we have
\[
S=\langle a_0,a_1,\ldots,a_{m-1}\rangle.
\] 
However, $\{a_0,\ldots,a_{m-1}\}$ need not form a minimal generating set of $S$; for instance when $a_i+a_j=a_{i+j}$ for some $0\leq i,j\leq m-1$.
We say that $S$ has \emph{maximal embedding dimension} if $\mathcal{A}(S)=\{a_0,\ldots,a_{m-1}\}$, that is, $\{a_0,\ldots,a_{m-1}\}$ is a minimal generating set of $S$.

Let $\KK$ be a field. The \emph{numerical semigroup ring} of $S$ is a subring of the polynomial ring $\KK[t]$ defined as
\[
\KK[S]\colonequals\KK[t^a\mid a\in \mathcal{A}(S)]=\KK[t^{a}\mid a\in S].
\]  
Let $T\colonequals \KK[x_0,x_1,\ldots,x_{m-1}]$ with the natural $\ZZ$-grading given by $\deg(x_i)=a_i$. The \emph{Ap\'ery toric ideal of $S$} is the kernel $J_S\colonequals \ker(\varphi)$ of the homomorphism 
\[
\varphi : \KK[x_0,x_1,\ldots,x_{m-1}] \to \KK[t], \qquad x_i \mapsto t^{a_i}.
\]
 
Let $Q\colonequals\KK[x_{i}\mid a_i\in\mathcal{A}(S)]\subset T$. The \emph{defining toric ideal of $S$} is
$I_S = J_S \cap Q$. Observe that 
\[
\KK[S]\cong\frac{T}{J_S}\cong\frac{Q}{I_S}.
\]

For every $0 \leq i,j \leq m-1$, define
\begin{equation}\label{2-1}
\begin{aligned}
c_{i,j} &\colonequals \frac{1}{m}(a_i+a_j-a_{i+j}) \geq 0,\\
\text{and}\qquad
b_{i,j} &\colonequals
\begin{cases}
c_{i,j} & \text{if } i+j\neq m,\\
c_{i,j}+1 & \text{if } i+j=m,
\end{cases}
\end{aligned}
\end{equation}
where the addition in subscripts is taken modulo $m$.
Observe that $c_{i,j}=0$ if and only if $a_i+a_j=a_{i+j}$. 
Since $m$ is the multiplicity, this equality is impossible when $i+j=m$. 
Hence, $b_{i,j}=0$ if and only if $c_{i,j}=0$. It is known that $J_S=\langle x_ix_j- x_0^{c_{ij}}x_{i+j} : 1 \leq i\leq j \leq m-1 \rangle$; see \cite[Lemma 3.2]{bgmns25}.

\subsection{A general tool for computing Betti numbers} 
In this subsection, we record a useful tool that we use to compute the Betti numbers. We note that the lemma below is a general result about free resolutions, and not specific to ideals coming from numerical semigroups.

Let $R$ be a non-negatively graded $\KK$-algebra with homogeneous maximal ideal $\mathfrak{m}$. Let $(\mathbb{E}_{\bullet},\partial_{\bullet})$ be an acyclic complex of free modules of finite rank
\[
\cdots \xrightarrow{\partial_{3}} E_{2}
\xrightarrow{\partial_{2}} E_{1}
\xrightarrow{\partial_{1}} E_{0},
\]
with $\operatorname{coker}(\partial_{1}) \cong M$. Then each boundary map $\partial_d: E_d \to E_{d-1}$ can be viewed as a matrix $M_d$ of size $\rank(E_{d-1}) \times \rank(E_d)$. Let $\overline{M_d}$ denote the corresponding matrix for the map $\partial_d \otimes \KK: E_d \otimes \KK \to E_{d-1} \otimes \KK$. In other words, the entries of $\overline{M_d}$ are the images of elements of $M_d$ modulo the maximal ideal $\mathfrak{m}$. We define $\alpha_d(\mathbb{E_{\bullet}})= \rank\left(\overline{M_d}\right)$.

\begin{lemma}\label{lem:alpha-gives-betti-nos}
    $\beta_d(M) = \rank(E_d)- \alpha_d(\mathbb{E_{\bullet}})-\alpha_{d+1}(\mathbb{E_{\bullet}})$ for all $d\geq 1$.
\end{lemma}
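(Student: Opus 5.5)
The plan is to compute $\beta_d(M)$ as $\dim_\KK \tor_d^R(M,\KK)$, using the complex $\mathbb{E}_\bullet\otimes_R\KK$. Since $\mathbb{E}_\bullet$ is acyclic with $\operatorname{coker}(\partial_1)\cong M$, it is a free resolution of $M$. Hence $\tor_d^R(M,\KK)=H_d(\mathbb{E}_\bullet\otimes_R\KK)$ for all $d\geq 0$. (It need not be minimal, which is why the correction terms appear.) I will use the standard fact that for a non-negatively graded $\KK$-algebra with homogeneous maximal ideal $\mathfrak m$ and a finitely generated graded module $M$, the graded Betti number $\beta_d(M)$ equals $\dim_\KK\tor_d^R(M,\KK)$. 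This holds because a minimal graded free resolution exists and becomes a complex with zero differentials after tensoring with $\KK=R/\mathfrak m$.

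The reduced complex $\mathbb{E}_\bullet\otimes\KK$ is a complex of finite-dimensional $\KK$-vector spaces
\[
\cdots\to \KK^{\rank E_{d+1}}\xrightarrow{\overline{M_{d+1}}}\KK^{\rank E_d}\xrightarrow{\overline{M_d}}\KK^{\rank E_{d-1}}\to\cdots,
\]
and $\overline{M_d}\,\overline{M_{d+1}}=0$. Rank--nullity then gives
\[
\dim H_d=\dim\ker\overline{M_d}-\dim\im\overline{M_{d+1}}=(\rank E_d-\alpha_d)-\alpha_{d+1}.
\]
This is the claimed formula.

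The main subtlety is the compatibility of the gradings. For the graded Nakayama argument to apply, the entries of $M_d$ must be homogeneous, so that $\partial_d\otimes\KK$ is exactly entrywise reduction modulo $\mathfrak m$. Implicitly, $\mathbb{E}_\bullet$ is a graded complex with degree-preserving maps, as for the Apéry resolutions used later. I would state this assumption and note that $\overline{M_d}$ is then well defined. The range $d\geq1$ is needed only so that $\alpha_d$ refers to an actual differential. At $d=0$ the same computation gives $\rank E_0-\alpha_1$.
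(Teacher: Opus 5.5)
Your proposal is correct and follows the paper's proof: you identify $\beta_d(M)$ with $\dim_\KK H_d(\mathbb{E}_\bullet\otimes\KK)$ and apply rank--nullity to $\overline{M_d}$ and $\overline{M_{d+1}}$. Your remark that the differentials must be homogeneous, so that $\partial_d\otimes\KK$ is entrywise reduction modulo $\mathfrak m$, makes explicit an assumption the paper leaves implicit.
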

\begin{proof}
 Let $(\overline{\mathbb E_{\bullet}}, \overline{\partial_{\bullet}})$ denote the complex $\mathbb E_{\bullet}\otimes \KK$. 
By definition, for every $d \geq 1$, we have 
\[
\beta_d(M)=\dim_{\KK}\left( \tor_d(M, \KK)\right)= \dim_{\KK}\left(\dfrac{ \ker(\overline{\partial_d})}{\im(\overline{\partial_{d+1}})}\right) = \dim_{\KK}\left(\ker(\overline{\partial_d})\right)  -\dim_{\KK}\left( \im(\overline{\partial_{d+1}})\right). 
\]
By the rank-nullity theorem, we have $\dim_{\KK}\left(\ker(\overline{\partial_d})\right) =  \dim_{\KK}(\overline{E_d}) -\dim_{\KK}\left( \im(\overline{\partial_d})\right) = \rank(E_d)- \alpha_d(\mathbb{E}_\bullet)$. Since $\dim_{\KK}\left(\im(\overline{\partial_{d+1}})\right) = \rank(\overline{M_{d+1}}) = \alpha_{d+1}(\mathbb{E}_\bullet)$, we get
$\beta_d(M)= \rank(E_d)- \alpha_d(\mathbb{E}_{\bullet})-\alpha_{d+1}(\mathbb{E}_{\bullet})$.
\end{proof}
The above result is useful for computing the Betti numbers of a module once a free resolution of the module is known and the ranks of the matrices $\overline{M_d}$ can be determined. We will use this result throughout the article.

\subsection{Numerical semigroups of Sally type}
Let $S$ be a numerical semigroup with multiplicity $m$. We say that $S$ is of \emph{Sally type} if $\width(S)=m-1$. 
The study of these semigroups was motivated by the work of Sally (see \cite{s80}), who introduced one such numerical semigroup as an example of a Gorenstein local ring of multiplicity two more than its embedding dimension and whose associated graded ring is Cohen--Macaulay. We consider the following numerical semigroups of Sally type.

\begin{defn}\label{defn:Sallytype}
    Let $m\geq3$ be an integer.
    \begin{enumerate}[leftmargin=*, label=(\alph*)]
        \item Given an integer $1\leq n<m$, 
        \[
        \Gamma_m(n)\colonequals \bigg\langle \{m,m+1,\ldots,2m-1\}\setminus\{m+n\}\bigg\rangle.
        \]
        \item Given integers $1\leq n_1<n_2<m$,
        \[
        \Gamma_m(n_1,n_2)\colonequals \bigg\langle \{m,m+1,\ldots,2m-1\}\setminus\{m+n_1,m+n_2\}\bigg\rangle.
        \]
    \end{enumerate}
\end{defn}

\begin{remark}
    The following technicalities must be noted about Definition \ref{defn:Sallytype}.
    \begin{enumerate}[leftmargin=*, label=(\alph*)]
        \item The semigroups $\Gamma_m(m-1)$ and $\Gamma_m(n_1,m-1)$ have width less than $m-1$; hence, they are not of Sally type. We study their homological properties nonetheless.
        \item The semigroups $\Gamma_3(1,2)$ and $\Gamma_4(1,3)$, although included in the definition above, are not numerical semigroups and consequently fall outside the scope of this article.
    \end{enumerate} 
\end{remark}

\section{Betti numbers of defining toric ideals}
\label{sec:definingideals}

In this section, we compute the Betti numbers of the Ap\'ery toric ideals and defining toric ideals of $\Gamma_m(n)$ and $\Gamma_m(2,n)$. Let $S$ be a numerical semigroup of multiplicity $m$, $R=\KK[S]$, $T=\KK[x_0,\ldots,x_{m-1}]$, and $Q=\KK[x_i\mid i\in \mathcal{A}(S)]\subset T$. Let $J_S\subset T$ and $I_S\subset Q$ be the Ap\'ery toric ideal and defining toric ideal of $S$ respectively. For convenience, we use the notation $y=x_0$. We begin by recalling the Ap\'ery resolution of $J_S$ over $T$ from \cite{bgmns25}.

\begin{construction}{(\cite[Subsection 2.3]{bgmns25}).}\label{cons:ideal-res}
    Let $F_0=T$ and for each \(d = 1, \dots, m-1\), let \(F_d\) be the free \(T\)-module with basis 
    \[
    \{\,e_{i,\:A} \mid i \in [m-1], \ A \subseteq [m-1], \ |A| = d, \ i \ge \min(A)\,\},
    \]
    where \([m-1] = \{1, 2, \dots, m-1\}\), and the degree of each basis element is given by
    \[
    \deg(e_{i,\:A}) = a_i + \sum_{j \in A} a_j.
    \]
    The rank of this module is \(\operatorname{rank}(F_d) = d\binom{m}{d+1}\). Let $F_d=0$ for $d\geq m$.

    For any subset \(A \subseteq [m-1]\), we define the sign function \(\operatorname{sign}(j, A) = (-1)^t\), where \(t\) is the index of \(j\) when the elements of \(A\) are listed in increasing order:
    \[
    A = \{\ell_0 < \ell_1 < \cdots < \ell_t = j < \cdots < \ell_r\}.
    \]

    We set \(e_{0,A} = 0\). Furthermore, for \(i \in [m-1]\) such that \(i < \min(A)\), we define:
    \begin{equation}\label{eq:eiA}
    e_{i,\:A} = \sum_{j \in A} \operatorname{sign}(j, A)\, e_{j, (A \cup \{i\}) \setminus \{j\}}.
    \end{equation}
    Consequently, for every subset \(B \subseteq [m-1]\) with $|B|>1$, we obtain the relation:
    \begin{equation}\label{eq:relation}
    \sum_{i \in B} \operatorname{sign}(i, B)\, e_{i, B \setminus \{i\}} = 0.
    \end{equation}

    In this article, we encounter elements $e_{i,\:A}$ of two types: those where $i < \min(A)$ and those where $i \geq \min(A)$. We refer to the elements satisfying $i \geq \min(A)$ as \emph{valid basis elements}, and all others as \emph{invalid}.

    Interpreting the subscripts \(i+j\) modulo \(m\), we define the boundary map \(\partial_d: F_d \longrightarrow F_{d-1}\) for \(2 \le d < m\) as follows:
    \begin{equation}\label{eq:boundary}
    \partial_d(e_{i,\:A}) = \sum_{j \in A} \operatorname{sign}(j, A) \Bigl( x_j\, e_{i, A \setminus \{j\}} - y^{\,b_{i,j}}\, e_{i+j, A \setminus \{j\}} \Bigr).
    \end{equation}
    Finally, for \(d = 1\), the boundary map is defined by:
    \begin{equation}\label{eq:boundary1}
    \partial_1(e_{i,j}) = x_i x_j - y^{\,c_{i,j}} x_{i+j}.
    \end{equation}
\end{construction}

\begin{theorem}\cite[Theorem 3.4]{bgmns25} The complex $(\FF_{\bullet},\partial_{\bullet})$ constructed above provides a free $T$-resolution of $T/J_S$. \qed
\end{theorem}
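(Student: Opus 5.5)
The plan is to verify three things in turn: that $(\FF_\bullet,\partial_\bullet)$ is a complex, that $\operatorname{coker}\partial_1=T/J_S$, and that the complex is acyclic. For acyclicity I will degenerate to a case that is already understood, namely the resolution of the square of a maximal ideal, and then lift exactness back.

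\emph{Step 1: $\partial^2=0$.} I first record the cocycle identity
\[
b_{i,j}+b_{i+j,k}=b_{j,k}+b_{i,j+k}\qquad\text{for all } i,j,k,
\]
with indices taken mod $m$ and the convention $a_0=m$. For the $c$'s this follows by expanding the definition: both sides equal $\frac1m(a_i+a_j+a_k-a_{i+j+k})$ plus boundary terms. The $+1$ corrections in $b$ occur exactly when a partial sum wraps to $0$, and these should balance on the two sides. I would check this by splitting on which of $i+j$, $j+k$, $i+j+k$ vanish mod $m$. Using this identity, I compute $\partial_{d-1}\partial_d(e_{i,A})$ on valid basis elements. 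The terms $x_jx_k$ cancel by the usual sign alternation. The mixed terms $x_j y^{b_{i,k}}$ and the $y^{b}y^{b}$ terms cancel in pairs by the cocycle identity. Some terms $e_{i',A'}$ with $i'<\min A'$ appear; these are rewritten using \eqref{eq:eiA}, and the rewriting has to be consistent, which amounts to checking that $\partial$ respects the relations \eqref{eq:relation}. The same bookkeeping with \eqref{eq:boundary1} gives $\partial_1\partial_2=0$.

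\emph{Step 2: $H_0$.} The image of $\partial_1$ is generated by the binomials $x_ix_j-y^{c_{i,j}}x_{i+j}$ with $1\le i\le j\le m-1$. By \cite[Lemma 3.2]{bgmns25} these generate $J_S$, so $\operatorname{coker}\partial_1=T/J_S$.

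\emph{Step 3: acyclicity.} I would place a weight on $T$ that makes $y$ small compared with the $x_i$, for instance an $\epsilon$-perturbation of the grading $\deg x_i=a_i$. The idea is that each differential entry $x_j$ becomes leading and each $y^{b_{i,j}}$ term becomes lower order. I would then filter $\FF_\bullet$ by this weight. In the associated graded complex all $y^{b}$ terms with $b\ge1$ disappear. The $b=0$ terms should also drop, provided the weight is chosen so that $x_ix_j$ strictly dominates $x_{i+j}$, which I expect is arranged by weighting the $x_i$ for $i\ge1$ uniformly.

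What remains is the complex on the basis $e_{i,A}$ ($i\ge\min A$) with differential $\sum_j \pm x_j e_{i,A\setminus j}$, tensored with $\KK[y]$. This is exactly the Eagon--Northcott type linear resolution of $(x_1,\dots,x_{m-1})^2$ in $m-1$ variables. Its ranks $d\binom{m}{d+1}$ match those of $F_d$, which is a useful consistency check. That complex is exact. Since the filtration is bounded in each internal degree, the spectral sequence of the filtered complex shows that $\FF_\bullet$ is acyclic.

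Equivalently, one can argue in two stages. First show that $\FF_\bullet/y\FF_\bullet$ is acyclic over $\KK[x_1,\dots,x_{m-1}]$. Then note that $y$ is a nonzerodivisor on the free modules and on $T/J_S$ (as $T/J_S$ is a domain), and use the long exact sequence of multiplication by $y$ together with graded Nakayama to get that $\FF_\bullet$ itself is acyclic.

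\emph{Main obstacle.} I expect the hardest part to be Step 1 in the presence of invalid basis elements: the signs from \eqref{eq:eiA} and the wrap-around cases $i+j\equiv0$ must all line up. I would first do this carefully in the case $i<\min A$ versus $i\ge\min A$ after one face deletion. A secondary risk is making sure the chosen weight genuinely kills the $b=0$ terms $e_{i+j,\cdot}$ in the associated graded complex. If it does not, I would instead degenerate in two stages: first send $y\to0$, then apply a further grading on the Artinian quotient.
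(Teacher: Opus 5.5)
\textbf{The gap is in Step~1.} Your Steps~2 and~3 are sound, but Step~1 does not go through as you describe it.

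First, the identity you record is false for general $i,j,k$. With $a_0=m$ one has $b_{0,k}=c_{0,k}=1$. So for $S=\langle 4,5,6,7\rangle$ and $(i,j,k)=(1,3,2)$ you get $b_{1,3}+b_{0,2}=4$ but $b_{3,2}+b_{1,1}=3$. What is true is that $b_{p,q}=\frac1m(a'_p+a'_q-a'_{p+q})$ for $p,q\neq 0$, where $a'_0=0$. Hence $b_{i,j}+b_{i+j,k}=b_{i,k}+b_{i+k,j}$ whenever $i+j\not\equiv 0$ and $i+k\not\equiv 0$.

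Second, and more seriously, the $y^by^b$ terms of $\partial_{d-1}\partial_d(e_{i,A})$ for $d\geq 3$ do not cancel in pairs. Put $\sigma_{jk}=\operatorname{sign}(j,A)\operatorname{sign}(k,A\setminus\{j\})$, so $\sigma_{jk}=-\sigma_{kj}$. The ordered pair $(j,k)$ contributes $\sigma_{jk}\,y^{b_{i,j}+b_{i+j,k}}\,e_{i+j+k,A\setminus\{j,k\}}$, and you pair it with $(k,j)$. If $j_0\colonequals m-i\in A$, the $(j_0,k)$ terms vanish because $e_{0,\cdot}=0$, so the $(k,j_0)$ terms, $k\in A\setminus\{j_0\}$, have no partner. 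For each such $k$ we have $b_{i,k}+b_{i+k,j_0}=b_{i,j_0}$ and $i+k+j_0\equiv k$, so these terms add up to
\[
-\operatorname{sign}(j_0,A)\,y^{b_{i,j_0}}\sum_{k\in A\setminus\{j_0\}}\operatorname{sign}\bigl(k,A\setminus\{j_0\}\bigr)\,e_{k,\,(A\setminus\{j_0\})\setminus\{k\}} .
\]
This is zero only by \eqref{eq:relation} applied to $B=A\setminus\{j_0\}$. You can see it already for $m=4$, $i=1$, $A=\{1,2,3\}$: the leftover is $-y^{b_{1,3}}\bigl(e_{1,\{2\}}-e_{2,\{1\}}\bigr)$.

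So \eqref{eq:relation} is not merely a consistency check for rewriting invalid symbols. That check does hold: \eqref{eq:boundary}, applied formally to an invalid $e_{i,A}$, agrees with its expansion, by \eqref{eq:relation} one level down together with $b_{i,j}=b_{j,i}$. Beyond that, \eqref{eq:relation} is exactly what makes the convention $e_{0,A}=0$ compatible with $\partial^2=0$. For $d=2$ the unpaired term instead cancels against $x_k\,y^{c_{i,j_0}}x_0=x_k\,y^{b_{i,j_0}}$ coming from \eqref{eq:boundary1}. With this repair, Step~1 is complete.

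\textbf{Comparison with the paper, and Step~3.} The paper gives no argument for this statement; it imports it from \cite{bgmns25}. Your proof is therefore a self-contained alternative, and your degeneration idea in Step~3 is correct. Your worry there is unfounded, and the two-stage fallback is unnecessary. Filter each $a$-degree by the number of $x$-factors, counting $e_{i,A}$ as $|A|+1$ factors. The $x_j$-terms preserve this count. Every $y$-term lowers it, including those with $b_{i,j}=0$ and the term $y^{c_{i,j}}x_{i+j}$ in $\partial_1$, because the target has one fewer factor.

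The associated graded complex is then $\KK[y]\otimes_{\KK}\bigl(C_{\geq 1}\to\KK[x_1,\ldots,x_{m-1}]\bigr)$, with $e_{i,j}\mapsto x_ix_j$. Here $V=\KK^{m-1}$, $K_\bullet$ is the Koszul complex on $x_1,\ldots,x_{m-1}$, and $C_d=\operatorname{coker}\bigl(K_{d+1}\to V\otimes K_d\bigr)$, the map being induced by comultiplication $\wedge^{d+1}V\to V\otimes\wedge^dV$. Relation \eqref{eq:relation} is precisely the image of this comultiplication. The long exact sequence of $0\to K_{\bullet+1}\to V\otimes K_\bullet\to C_\bullet\to 0$ (with the usual signs) shows that $C_{\geq 1}$ resolves $(x_1,\ldots,x_{m-1})^2$. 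Since the filtration is finite in each $a$-degree, exactness in positive degrees transfers to $\FF_\bullet$.
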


The resolution constructed above is minimal if and only if the numerical semigroup is MED; see \cite[Corollary 3.5]{bgmns25}. 

Minimal free resolutions of the Ap\'ery toric ideal and the defining toric ideal of a given numerical semigroup are closely related to each other. The following result makes this connection explicit.

\begin{proposition}\cite[Proposition 4.1]{bgmns25}\label{prop:tensorwithKoszul}
A minimal free resolution of $T/J_S$ can be obtained as the tensor product of a minimal free resolution of $T/I_ST$ with a Koszul complex over $r$ elements, where $r$ is the number of nonzero elements in the Ap\'ery set that are not in the minimal generating set of $S$. \qed
\end{proposition}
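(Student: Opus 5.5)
The plan is to realise $T/J_S$ as a quotient of $T/I_ST$ by a regular sequence of homogeneous elements lying in the maximal ideal. A tensor product with the corresponding Koszul complex then resolves $T/J_S$, and minimality follows because every entry of every differential lies in the homogeneous maximal ideal.

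\textbf{Step 1: a presentation of $J_S$.} Let $N=\{i\in[m-1]\mid a_i\notin\mathcal{A}(S)\}$, so $r=|N|$. For each $i\in N$, the element $a_i$ is a sum of at least two minimal generators, so there is a monomial $f_i\in Q$ of degree at least $2$ in the variables with $\deg f_i=a_i$. Hence $g_i\colonequals x_i-f_i\in J_S$ is homogeneous. I will show
\[
J_S=I_ST+(g_i\mid i\in N).
\]
The inclusion $\supseteq$ is clear. For $\subseteq$, note that modulo the $g_i$ every polynomial $h\in T$ is congruent to some $h'\in Q$, obtained by substituting $x_i\mapsto f_i$. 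If $h\in J_S$, then $h'\in J_S\cap Q=I_S$, so $h\in I_ST+(g_i)$.

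\textbf{Step 2: regularity.} We have $T/I_ST\cong (Q/I_S)[x_i\mid i\in N]$, a polynomial ring over $Q/I_S$. The $\KK$-algebra automorphism of $T$ fixing $Q$ and sending $x_i\mapsto x_i-f_i$ is well defined, since $f_i\in Q$. It preserves $I_ST$ and carries the variables $x_i$ to the $g_i$. Since the variables $x_i$, $i\in N$, form a regular sequence on $(Q/I_S)[x_i]$, the $g_i$ form a homogeneous regular sequence on $T/I_ST$.

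\textbf{Step 3: the tensor product and minimality.} Let $\mathbb{G}$ be a minimal graded free $Q$-resolution of $Q/I_S$. Because $T$ is flat over $Q$, the complex $\mathbb{G}\otimes_QT$ is a minimal $T$-resolution of $T/I_ST$. Let $K$ be the Koszul complex on $g_i$, $i\in N$. By the regularity in Step 2 and the standard argument, the two spectral sequences or equivalently iterated mapping cones, the complex $(\mathbb{G}\otimes_QT)\otimes_TK$ has homology only in degree $0$. That homology equals
\[
T/(I_ST+(g_i))=T/J_S
\]
by Step 1. For minimality, each $g_i$ lies in the homogeneous maximal ideal of $T$, because $x_i$ is a variable and $f_i$ has degree at least $2$. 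Each differential of the tensor complex is a signed sum of $\partial_{\mathbb{G}}\otimes1$ and $1\otimes\partial_K$, both of which have entries in the maximal ideal. So the tensor product is minimal.

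\textbf{Main obstacle.} I expect the only delicate point to be Step 2. One must make sure the change of variables really preserves $I_ST$, which holds because $I_S\subset Q$ is fixed. This is exactly what makes the regular-sequence argument, and hence the acyclicity of the Koszul tensor product, go through.
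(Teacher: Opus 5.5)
Your proof is correct and complete. It has four ingredients: the presentation $J_S=I_ST+(x_i-f_i\mid i\in N)$, the triangular change of coordinates showing that the $x_i-f_i$ form a homogeneous regular sequence on $T/I_ST\cong (Q/I_S)[x_i\mid i\in N]$, the acyclicity of the Koszul tensor product, and minimality because every differential entry lies in the homogeneous maximal ideal. The paper gives no argument of its own here and simply imports the result from \cite{bgmns25}; your argument is the standard one behind that citation, and it directly gives the Betti number relations the paper uses later (for example \eqref{eq:koszul_complex}, i.e.\ multiplying the Betti generating function of $Q/I_S$ by $(1+z)^r$).
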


In \cite{gsss25}, the authors conjecture descriptions of the Betti numbers of the defining toric ideals of certain numerical semigroups of Sally type. We restate their conjectures here for completeness.

\begin{conjecture}\label{conj:GSSSconj}
    Let $2\leq n<m.$
    \begin{enumerate}[leftmargin=*, label=(\alph*)]
        \item \cite[Conjecture 6.1]{gsss25} Let $S=\Gamma_m(n)$, $S_1=\Gamma_m(1)$, and $Q_1=\KK[x_0,x_2,\ldots,x_{m-1}]$. Then, 
        \[
        \beta_d(Q/I_S)=\begin{cases}
            \beta_d(Q_1/I_{S_1}) & \text{if } d\leq n-2\\
            \beta_d(Q_1/I_{S_1})+\binom{m-n}{d+1-n} & \text{if } n-2<d<m-2\\
            m-n & \text{if } d=m-2,
        \end{cases}
        \]
        \item \cite[Conjecture 6.3]{gsss25} Let $S=\Gamma_m(2,n)$ for $n=4,5$. Then $\beta_0(Q/I_S)=1$ and for $d\geq 1$, 
\[
\beta_d(Q/I_S)=d\binom{m-2}{d+1}.
\]
        \item \cite[Conjecture 6.2]{gsss25} Let $S=\Gamma_m(2,n)$ for $n\geq6$. Then 
\[
\beta_d(Q/I_S)=
        \begin{cases}
            1 & d=0\\
            \frac{d(m-1)}{m-d-2}\binom{m-3}{d+1} & 1\leq d\leq n-5\\
            \frac{d(m-1)}{m-d-2}\binom{m-3}{d+1}+1 & d=n-4
        \end{cases}\qedhere
\]
    \end{enumerate}
\end{conjecture}

We prove that Conjecture \ref{conj:GSSSconj} is true (see Theorems \ref{thm:defining-ideal-for-n}, \ref{thm:defining-ideal-for-24-25}, and \ref{thm:defining-ideal-for-2n}). In fact, Theorem \ref{thm:defining-ideal-for-2n} determines all the Betti numbers explicitly, extending the partial description conjectured in (c) above.

\subsection{Betti numbers of the defining toric ideals of $\Gamma_m(n)$ }\label{subsec-n}
In this subsection, we compute Betti numbers of Ap\'ery toric ideals and defining toric ideals of the Sally type numerical semigroups of the form $\Gamma_{m}(n)$ for $n\geq2$. Observe that for $\Gamma_m(n)$, 
\[
a_0=m, \quad a_i=m+i \; \text{for } i\neq0,n, \quad a_n=2m+n.
\]
Hence, $b_{ij}=0$ iff $i+j=n.$

\begin{lemma}\label{lem:alpha-d}
    Let $2\leq n<m$, $S=\Gamma_m(n)$, and $\FF_\bullet$ be the free resolution of the Ap\'ery toric quotient $T/J_S$ as in Construction \ref{cons:ideal-res}. We have 
    
\[
 \alpha_d({\FF_{\bullet}}) =\begin{cases}
        0 & {\text{if }} \ d=1 \\ \binom{m-1}{d-1} - \binom{m-n}{d-n} & {\text{if }}\  d \geq 2.\end{cases}
\]
\end{lemma}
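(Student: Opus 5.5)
The plan is to reduce every differential of Construction \ref{cons:ideal-res} modulo the homogeneous maximal ideal of $T$. The key observation is that almost every entry vanishes. Only a very small set of unit entries survives, and we can then count the rank directly.

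\textbf{The case $d=1$.} Each $\partial_1(e_{i,j})=x_ix_j-y^{c_{i,j}}x_{i+j}$ lies in the maximal ideal. This holds even when $c_{i,j}=0$, since then the second term is the variable $x_{i+j}$. Hence $\overline{M_1}=0$ and $\alpha_1(\FF_\bullet)=0$.

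\textbf{The case $d\ge 2$: surviving entries.} In \eqref{eq:boundary} every coefficient $x_j$ dies modulo the maximal ideal. A coefficient $y^{b_{i,j}}$ survives exactly when $b_{i,j}=0$, which for $\Gamma_m(n)$ means $i+j=n$. So for a valid basis element $e_{i,A}$ we get
\[
\overline{\partial_d}(e_{i,A})=
\begin{cases}
-\operatorname{sign}(n-i,A)\,e_{n,A\setminus\{n-i\}} & \text{if } i<n \text{ and } n-i\in A,\\
0 & \text{otherwise.}
\end{cases}
\]
Therefore the image of $\overline{\partial_d}$ is spanned by the vectors $e_{n,B}$, with $|B|=d-1$, for which there exist $j\in[1,n-1]\setminus B$ such that $A=B\cup\{j\}$ and $i=n-j$ satisfy the validity condition $n-j\ge\min(B\cup\{j\})$.

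\textbf{Which $B$ occur.} I claim such a $j$ exists if and only if $[1,n-1]\not\subseteq B$.
\begin{itemize}[leftmargin=*]
\item If $[1,n-1]\subseteq B$, no admissible $j$ exists, so these $B$ do not occur.
\item If some $j<n$ is missing from $B$, split into two cases. If $1\in B$, then $\min(B\cup\{j\})=1\le n-j$. If $1\notin B$, take $j=1$; then $n-1\ge 1=\min(B\cup\{1\})$.
\end{itemize}
The number of $(d-1)$-subsets of $[m-1]$ containing $[1,n-1]$ is $\binom{m-n}{d-n}$. So $\binom{m-1}{d-1}-\binom{m-n}{d-n}$ vectors $e_{n,B}$ occur, one for each admissible $B$.

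\textbf{Linear independence (main obstacle).} It remains to show that the vectors $e_{n,B}$, over all $(d-1)$-subsets $B$, are linearly independent in $F_{d-1}\otimes\KK$. The difficulty is that some $e_{n,B}$ are invalid symbols and must be rewritten via \eqref{eq:eiA}.
\begin{itemize}[leftmargin=*]
\item If $n\ge\min B$, then $e_{n,B}$ is itself a valid basis element with first index $n$, and different $B$ give different basis elements.
\item If $n<\min B$, then $n\notin B$ and \eqref{eq:eiA} expands $e_{n,B}$ as $\sum_{b\in B}\pm e_{b,(B\cup\{n\})\setminus\{b\}}$. These are valid basis elements whose first index $b$ is greater than $n$. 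So they are disjoint from the basis elements in the first case.
\item For two different invalid $B$, the sets $C=B\cup\{n\}$ differ, since $B=C\setminus\{n\}$. The basis elements $e_{b,C\setminus\{b\}}$ determine $C$, so the expansions have disjoint supports.
\end{itemize}
Hence all the $e_{n,B}$ have pairwise disjoint nonempty supports in the standard basis and are therefore independent. This gives $\alpha_d(\FF_\bullet)=\binom{m-1}{d-1}-\binom{m-n}{d-n}$ for $d\ge 2$.

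I expect the delicate points to be the bookkeeping in the two cases of the independence argument, and the edge cases $d\ge m$ and $d<n$. For $d<n$ the binomial $\binom{m-n}{d-n}$ vanishes, and I will check that the formula remains consistent there.
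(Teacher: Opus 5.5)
Your proof is correct. It reaches the paper's count by a dual route: the paper counts the row space of $\overline{M_d}$, while you compute the column space directly.

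The paper fixes a row. For a valid row $e_{n,B}$ with $\{1,\ldots,n-1\}\not\subseteq B$, it exhibits the column $e_{n-j,\,B\cup\{j\}}$, with $j=\min(\{1,\ldots,n-1\}\setminus B)$, whose only unit lies in that row. It then treats separately the rows $e_{k,B}$ with $k>n=\min(B)$ that arise from expanding invalid symbols, grouping them into classes $R_{k,B}$ of rows equal up to sign that each contribute $1$. The rank is assembled as
\[
\Bigl[\tbinom{m-1}{d-1}-\tbinom{m-n-1}{d-1}-\tbinom{m-n}{d-n}\Bigr]+\tbinom{m-n-1}{d-1}.
\]

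You instead note that every column of $\overline{M_d}$ is $0$ or $\pm$ a single vector $e_{n,B}$, expanded via \eqref{eq:eiA} when invalid. Distinct $B$ give vectors with pairwise disjoint nonempty supports, so the rank is simply the number of $B$ that occur. This avoids splitting according to whether $\min(B)\le n$. It also replaces the paper's informal steps (that certain rows ``can be ignored'' and that each $R_{k,B}$ ``contributes exactly $1$'') with a transparent independence criterion.

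The paper's row bookkeeping has its own payoff. It records exactly which rows carry units, which is the content of the Remark after the lemma. That description is reused for $\Gamma_m(2,n)$ in Lemma~\ref{lem:alphad-2n}, where a column $e_{1,A}$ can carry two units (in rows $e_{2,\cdot}$ and $e_{n,\cdot}$), so your ``one vector per column'' picture does not transfer verbatim.

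The edge cases you flagged are harmless:
\begin{itemize}
\item For $d<n$, no $(d-1)$-subset contains $\{1,\ldots,n-1\}$, and $\binom{m-n}{d-n}=0$.
\item For $d=m$, we have $F_m=0$, and the formula gives $\binom{m-1}{m-1}-\binom{m-n}{m-n}=0$.
\item For $d>m$, both binomials vanish.
\end{itemize}
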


\begin{proof}
Since all the entries of $M_1$ are in the maximal ideal, we see that $\alpha_1(\FF_\bullet)=0$. 
    
Assume that $d \geq 2$ and consider the matrix $M_d$ that represents the map $\partial_d: F_d \to F_{d-1}$. 
Note that the set   \[
    \{\,e_{i,\:A} \mid i \in [m-1], \ A \subseteq [m-1], \ |A| = d-1, \ i \ge \min(A)\,\},
    \] is a basis of $F_{d-1}$, and hence the rows of $M_d$ are indexed by this set. Recall that from Equation \eqref{eq:boundary} we have 
\begin{equation*}
    \partial_d(e_{i,\:A}) = \sum_{j \in A} \operatorname{sign}(j, A) \Bigl( x_j\, e_{i,\:A \setminus \{j\}} - y^{\,b_{i,j}}\, e_{i+j,\:A \setminus \{j\}} \Bigr).
\end{equation*}
Modulo the maximal ideal $\mathfrak{m}$, the term 
$\operatorname{sign}(j,A)\,y^{\,b_{i,j}} e_{i+j,\:A\setminus\{j\}}$ 
reduces to $\pm e_{i+j,\:A\setminus\{j\}}$ when $b_{i,j}=0$, and vanishes otherwise.
If $i+j < \min(A\setminus\{j\})$ then $e_{i+j,\:A\setminus\{j\}}$ is not a basis element
and must be expanded via the relation \eqref{eq:eiA} into a linear combination of {valid 
basis elements} $e_{\ell,\,((A\setminus\{j\})\cup\{i+j\})\setminus\{\ell\}}$.
Consequently, a unit entry appears in the row of $\overline{M_d}$ indexed by a basis 
element $e_{k,\:B}$ precisely when, after possibly applying \eqref{eq:eiA}, the 
coefficient of $e_{k,\:B}$ in the summation representing $\partial_d(e_{i,\:A})$ is $\pm 1$.

Now, fix a basis element $e_{k,\:B}$ of $F_{d-1}$. We consider two cases: $k=n$ and $k \neq n$.

\medskip
\noindent \textbf{Case 1: $k=n$.}
\par
A unit entry in row $e_{n,\:B}$ (with $|B| = d-1$ and $n \ge \min (B)$), if it exists, arises from a column $e_{i,\:A}$, where $i\in\{1,\ldots,n-1\}$ and $A\subset[m-1]$ such that $|A|=d$, $\:A=B\cup\{n-i\}$, and $i\geq\min(A).$ 

Let $j=n-i.$ Such a column $e_{i,\:A}$ is completely determined by a choice of $j\in\{1,\ldots,n-1\}$ such that $j\notin B$ and $n-j\geq\min(B\cup\{j\})$.
We distinguish two subcases.

\smallskip
\noindent \textit{Subcase 1: $\{1,\dots,n-1\} \subseteq B$.}
Clearly, there cannot exist $j\in\{1,\ldots,n-1\}$ such that $j\notin B$. Hence the row $e_{n,\:B}$ consists entirely of zero entries in $\overline{M_d}$.

\smallskip
\noindent \textit{Subcase 2: $\{1,\dots,n-1\} \not\subseteq B$.}
Define $S = \{1,\dots,n-1\} \setminus B \neq \emptyset$ and set $j = \min (S)$. Consider the column $\mathbf{c}_B = e_{n-j,\:A}$, where $A = B \cup \{j\}$.

We first verify that  $n-j \ge \min (A)$.
\begin{itemize}
  \item If $1 \notin B$ then $1 \in S$, so $j = \min (S) = 1$ and $\min(A) = 1$. The inequality $n-1 \ge 1$ holds as $n \ge 2$.%
  \item If $1 \in B$ then $1 \notin S$, so $j > 1$. Since $1 \in B$, we have $\min(B) = 1$, and thus $\min(A) = \min(B \cup \{j\}) = 1$. The inequality becomes $n-j \ge 1$, which holds because $j \le n-1$.
\end{itemize}
Thus $n-j \ge \min (A)$ and the basis element $e_{n-j,\:A}$ is well defined.

Now compute its boundary modulo $\mathfrak{m}$:
\[
\overline{\partial}_d(\mathbf{c}_B) = \sum_{\substack{k \in A \\ b_{n-j,k}=0}} -\operatorname{sign}(k,A)\, e_{(n-j)+k,\,A\setminus\{k\}}.
\]
Since $n-j \in \{1,\dots,n-1\}$, the condition $b_{n-j,\:k}=0$ holds only when $k \in \{1,\dots,n-1\}$ and $(n-j)+k = n$, i.e.\ $k = j$. Hence the sum reduces to the single term with $k=j$:
\[
\overline{\partial}_d(\mathbf{c}_B) = -\operatorname{sign}(j,\:A)\, e_{n,\:A\setminus\{j\}} = \pm e_{n,\:B}.
\]
All other terms vanish modulo $\mathfrak{m}$. Therefore the column $\mathbf{c}_B$ has exactly one nonzero entry in $\overline{M_d}$, which is a unit located in row $e_{n,\:B}$. 

We now determine the contribution of these rows to the rank of $\overline{M_d}$. Suppose we have a linear combination of {all} rows of $\overline{M_d}$ that equals the zero vector. For a specific row $e_{n,\:B}$ with $\{1,\ldots,n-1\}\not\subseteq B$, we examine the column $\mathbf{c}_B$ constructed above. Its only nonzero entry is in row $e_{n,\:B}$, and equals $\pm 1$. Thus, evaluating the linear combination of the rows of $\overline{M_d}$ at column $\mathbf{c}_B$ forces the coefficient of row $e_{n,\:B}$ to be zero. Since this holds for every such row, the rows $e_{n,\:B}$ with $\{1,\dots,n-1\}\not\subseteq B$ are linearly independent. The rows with $\{1,\dots,n-1\}\subseteq B$ (and $k=n$) are identically zero and contribute nothing to the rank. Consequently, among the rows of $\overline{M_d}$ with $k=n$, the row space has dimension equal to the number of subsets $B\subseteq[m-1]$ of size $d-1$ satisfying $n\ge \min(B)$ and $\{1,\dots,n-1\}\not\subseteq B$. Equivalently, this contribution is the cardinality of the set 
\[
\left\{ e_{k,\:B}  \mid B \subseteq [m-1],\: \vert B \vert = d-1,\:\{1, \ldots, n-1\} \not\subset B \text{ and } k=n \geq \min(B)\right\},
\]
 which equals
\[
\binom{m-1}{d-1}
-
\binom{m-n-1}{d-1}
-
\binom{m-n}{d-n}.
\]
 Moreover, the above argument also shows that each row $e_{n,\:B}$ with $\{1,\ldots,n-1\}\not\subseteq B$ is linearly independent from any subset of rows of $\bar{M_d}$ not containing $e_{n,\:B}$. Hence, each such row can be ``ignored'' for the rest of the rank computation.

\medskip
\noindent \textbf{Case 2: $k \neq n$.}
\par
\textit{Subcase 1: $k < n$.} Suppose a column of $\overline{M_d}$ contained a unit in row $e_{k,\:B}$. A unit can only arise from a term $\pm e_{\,i+j,\:A\setminus\{j\}}$ in $\overline{\partial}_d(e_{i,\:A})$ with $b_{i,j}=0$, for which we must have $i,j \in \{1,\ldots, n-1\}$ and $i+j=n$. Hence the term is $\pm e_{\,n,\:A\setminus\{j\}}$. If $n \ge \min(A\setminus\{j\})$, then $e_{n,\,A\setminus\{j\}}$ is already a valid basis vector, and the unit would appear in row $e_{n,\:A\setminus\{j\}}$, not in the row $e_{k,\:B}$. If $n < \min(A\setminus\{j\})$, then $e_{n,\,A\setminus\{j\}}$ is not a valid basis vector; we must apply the relation \eqref{eq:eiA} to rewrite it as a linear combination of basis vectors $e_{\ell,\,((A\setminus\{j\})\cup\{n\})\setminus\{\ell\}}$ with $\ell \in A\setminus\{j\}$. Since $n < \min(A\setminus\{j\}) \le \ell$, all resulting row indices are strictly greater than $n$, and none equals $k$ because $k < n$. Thus in all cases the row $e_{k,\:B}$ has no unit entry, and is identically zero in $\overline{M_d}$.

\smallskip
\textit{Subcase 2: $k > n$.}  
Suppose that the row $e_{k,\:B}$ contains a unit entry in $\overline{M_d}$.  
As before, a unit can only arise from a term $\pm e_{\,n,\,A\setminus\{j\}}$ in 
$\overline{\partial}_d(e_{i,\:A})$ with $b_{i,j}=0$, i.e.\ $i,j\in\{1, \ldots,n-1\}$ and 
$i+j=n$.  Since $k\neq n$, the basis element $e_{n,\,A\setminus\{j\}}$ cannot 
already be equal to $e_{k,\:B}$; therefore we must have  
$n < \min(A\setminus\{j\})$, so that $e_{n,\,A\setminus\{j\}}$ is expanded 
using \eqref{eq:eiA}. 

Write the expansion
\[
e_{n,\,A\setminus\{j\}} = \sum_{\ell\in A\setminus\{j\}} 
\operatorname{sign}(\ell,\,A\setminus\{j\})\,
e_{\ell,\;((A\setminus\{j\})\cup\{n\})\setminus\{\ell\}} .
\]
For the row $e_{k,\:B}$ to appear with a unit coefficient, there must exist an 
index $\ell\in A\setminus\{j\}$ such that  
$k = \ell$ and $B = ((A\setminus\{j\})\cup\{n\})\setminus\{k\}$.  
Note that this forces $k\notin B$.  

Since $n < \min(A\setminus\{j\})$, every element of $A\setminus\{j\}$ is 
strictly greater than $n$.  The set $B$ is obtained from 
$(A\setminus\{j\})\cup\{n\}$ by removing $k$; hence $n\in B$ and every other 
element of $B$ belongs to $A\setminus\{j\}$ and is therefore $> n$.  
Consequently $\min(B) = n$.  

Thus a necessary condition for a unit to appear in row $e_{k,\:B}$ with $k>n$ is  
$\min(B)=n$ and $k\notin B$.  Rows indexed by $e_{k,\:B}$ with $k>n$ and $\min(B)\neq n$ are zero in $\overline{M_d}$.

Now assume $\min(B)=n$ and $k\notin B$.  From the expansion \eqref{eq:eiA} we 
saw that the only way to obtain a unit in row $e_{k,\:B}$ is to start from the 
(not yet expanded) element $e_{n,\,A\setminus\{j\}}$ with 
$A\setminus\{j\} = (B\setminus\{n\})\cup\{k\}$.  Hence the term that, after 
applying \eqref{eq:eiA}, yields a unit in row $e_{k,\:B}$ is exactly 
$\pm e_{n,\,(B\setminus\{n\})\cup\{k\}}$.

Consequently, a column $e_{i,\:A}$ of $F_d$ can produce such a unit only if
\[
\overline{\partial}_d(e_{i,\:A}) \text{ contains } \pm e_{n,\,(B\setminus\{n\})\cup\{k\}}
\]
with $b_{i,j}=0$.  By \eqref{eq:boundary} this forces $A = (B\setminus\{n\})\cup\{k\}\cup\{j\}$ 
and $i+j=n$ for some $j\in\{1,\dots,n-1\}$ (so that 
$b_{i,j}=0$).  Moreover, $e_{i,\:A}$ must be a valid basis element of $F_d$, which requires $i \ge \min(A)$.

Since $\min(B)=n$, every element of $B\setminus\{n\}$ is strictly greater 
than $n$.  Also $k>n$, while $j < n$.  Therefore
\[
\min(A)=\min\bigl((B\setminus\{n\})\cup\{k\}\cup\{j\}\bigr)=j ,
\]
and the condition $i\ge \min(A)$ becomes $i\ge j$. In particular, as $n \geq 2$, we see that $\overline{\partial}_d(e_{n-1,\:A})$, where $A=(B\setminus \{n\})\cup\{k\}\cup\{1\}$, contains the term $\pm e_{n,\:(B\setminus\{n\})\cup\{k\}}$.

Given any $e_{k,\:B}$ with $k>n$ and $n=\min{(B)}$, let $R_{k,\:B}$ denote the set of rows of $\overline{M_d}$ containing a unit that are obtained by applying \eqref{eq:eiA} to $e_{n,(B\setminus\{n\})\cup\{k\}}$. Note that any two rows in $R_{k,\:B}$ are equal up to sign. 
Moreover, if $e_{(k_1,B_1)} \neq e_{(k_2,\:B_2)}$ with $k_1,k_2>n$, $n=\min(B_1)=\min(B_2)$, and $\{k_1\}\cup B_1 \neq \{k_2\} \cup B_2$, then $R_{k_1,B_1} \cap R_{k_2,\:B_2}= \emptyset$. 
These observations, together with the analysis done in Case 1, imply that all such distinct $R_{k,\:B}$  {contribute exactly} $1$ to the rank of $\overline{M_d}$. The number of such $R_{k,\:B}$ equals

\[
\dfrac{1}{d-1}\vert \left\{ e_{k,\:B} \mid B\subseteq [m-1], \vert B \vert = d-1, k \notin B,  {\text{  and }} k > n=\min(B)\right\}\vert = \binom{m-n-1}{d-1}.
\]

Therefore, by Cases 1 and 2 above, we get 
\[
\alpha_d=\rank(\overline{M_d})=\binom{m-1}{d-1}
-
\binom{m-n-1}{d-1}
-
\binom{m-n}{d-n} + \binom{m-n-1}{d-1}
= \binom{m-1}{d-1}
-
\binom{m-n}{d-n}.
\]
 
This completes the proof.
\end{proof}

\begin{remark}
    In the above proof, we have effectively shown that for $d\geq2$, a basis of the row space of $\bar{M_d}$ is in one-to-one correspondence with the set 
\[
\Lambda=\{ B \subseteq [m-1] \mid \vert B \vert = d-1 {\text{  and }}\{1, \ldots, n-1\} \not\subset B\},
\]
 under the correspondence 
    \[
         B \longleftrightarrow
         \begin{cases}
             e_{n,\:B} \text{ if }n\geq\min(B)\\
             e_{k,\:(B\cup\{n\})\setminus\{k\}} \text{ if } n<\min(B) =: k.
         \end{cases}
    \]
    We will use this crucial observation in the proof of Lemma \ref{lem:alphad-2n}.
\end{remark}

With the help of Lemma \ref{lem:alpha-d} we calculate the Betti numbers of the Ap\'ery toric ideal of $\Gamma_m(n)$.
\begin{theorem}\label{thm:betti-ideal}
    Let $2\leq n<m$ be integers and $S= \Gamma_m(n)$. Then for the Ap\'ery toric ideal $J_S$ of $S$, we have $\beta_0(T/J_S)=1$, and 
    \[
    \beta_d(T/J_S)= \begin{cases}
        \displaystyle\binom{m-1}{2}+\binom{m-n}{2-n} & \text{if } d=1,\\[10pt]
        \displaystyle d\binom{m}{d+1} - \binom{m}{d} + \binom{m-n+1}{d-n+1} & \text{if } d \ge 2.
    \end{cases}
    \]
\end{theorem}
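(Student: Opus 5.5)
The plan is to apply Lemma \ref{lem:alpha-gives-betti-nos} to the Apéry resolution $\FF_\bullet$ of Construction \ref{cons:ideal-res}, using the ranks computed in Lemma \ref{lem:alpha-d}. Since $\FF_\bullet$ is a free resolution of $T/J_S$, we get
\[
\beta_d(T/J_S)=\rank(F_d)-\alpha_d(\FF_\bullet)-\alpha_{d+1}(\FF_\bullet),\qquad \rank(F_d)=d\binom{m}{d+1}.
\]
The value $\beta_0=1$ is immediate because $\alpha_1=0$: the ideal $J_S$ lies in the maximal ideal, so $\dim_\KK(T/J_S\otimes\KK)=1$.

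For $d=1$, substitute $\alpha_1=0$ and $\alpha_2=\binom{m-1}{1}-\binom{m-n}{2-n}$. This gives
\[
\beta_1=\binom{m}{2}-(m-1)+\binom{m-n}{2-n}=\binom{m-1}{2}+\binom{m-n}{2-n},
\]
using $\binom{m}{2}-(m-1)=\binom{m-1}{2}$. Note that $\binom{m-n}{2-n}$ equals $1$ when $n=2$ and $0$ otherwise. This matches the single generator $x_1^2-x_2$ dropping out when $n=2$.

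For $d\ge 2$, both $\alpha_d$ and $\alpha_{d+1}$ are given by the $d\ge2$ branch of Lemma \ref{lem:alpha-d}, so
\[
\beta_d=d\binom{m}{d+1}-\binom{m-1}{d-1}-\binom{m-1}{d}+\binom{m-n}{d-n}+\binom{m-n}{d+1-n}.
\]
Pascal's rule collapses $\binom{m-1}{d-1}+\binom{m-1}{d}$ to $\binom{m}{d}$, and collapses $\binom{m-n}{d-n}+\binom{m-n}{d-n+1}$ to $\binom{m-n+1}{d-n+1}$. This yields the stated formula.

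The only point needing care is the boundary range. At $d=m-1$ we need $\alpha_m$, and $F_m=0$ gives $\alpha_m=0$. The formula of Lemma \ref{lem:alpha-d} at $d=m$ gives $\binom{m-1}{m-1}-\binom{m-n}{m-n}=0$, which is consistent, so the uniform expression holds. The lemma also implicitly uses the convention that binomial coefficients with negative lower index vanish. Beyond this consistency check, the argument is a direct computation from results already established.
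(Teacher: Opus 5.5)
Your proposal is correct and is essentially the paper's proof: you substitute $\alpha_1=0$ and the formula of Lemma \ref{lem:alpha-d} into Lemma \ref{lem:alpha-gives-betti-nos} and collapse with Pascal's rule, and your check that the formula gives $\alpha_m=0$ is a harmless addition. The one slip is your aside about $n=2$, which has the direction backwards: there $\binom{m-n}{2-n}=1$ \emph{adds} one to $\beta_1$, so one fewer of the $\binom{m}{2}$ listed generators is redundant, and $x_1^2-x_2$ is in fact a minimal generator of $J_S$; nothing in your argument depends on this remark.
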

\begin{proof}
Clearly, $\beta_0(T/J_S)=1$. Note that the complex $(\FF_\bullet,\partial_\bullet)$ from Construction~\ref{cons:ideal-res} is a free resolution of the Ap\'ery toric quotient $T/J_S$. Hence  by  Lemma \ref{lem:alpha-gives-betti-nos}, given any $d \geq 1$, we have
\[
\beta_d(T/J_S)=\rank(F_d)-\alpha_d(\FF_\bullet)-\alpha_{d+1}(\FF_\bullet),
\]
where $\alpha_d(\FF_\bullet)=\rank(\overline{M}_d)$ is given by Lemma~\ref{lem:alpha-d}.  
Recall that $\rank(F_d)=d\binom{m}{d+1}$ for every $d\ge 1$.

Suppose $d=1$. Then using $\alpha_1(\FF_\bullet)=0$ and the expression for $\alpha_2(\FF_\bullet)$ from Lemma~\ref{lem:alpha-d}, we get
\[
\beta_1(T/J_S)=\rank(F_1)-\alpha_2(\FF_\bullet) =\binom{m}{2}-\Bigl(\binom{m-1}{1}-\binom{m-n}{2-n}\Bigr)
 =\binom{m-1}{2}+\binom{m-n}{2-n}.
\]

Suppose $d\geq 2$. Then Lemma~\ref{lem:alpha-d} gives
\[
\alpha_d(\FF_\bullet)=\binom{m-1}{d-1}-\binom{m-n}{d-n},\qquad 
\alpha_{d+1}(\FF_\bullet)=\binom{m-1}{d}-\binom{m-n}{d+1-n}.
\]
Therefore, we get
\[
\begin{aligned}
\beta_d(T/J_S) &= d\binom{m}{d+1}
          -\Bigl(\binom{m-1}{d-1}-\binom{m-n}{d-n}\Bigr)
          -\Bigl(\binom{m-1}{d}-\binom{m-n}{d+1-n}\Bigr) \\[4pt]
        &= d\binom{m}{d+1}
           -\Bigl(\binom{m-1}{d-1}+\binom{m-1}{d}\Bigr)
           +\Bigl(\binom{m-n}{d-n}+\binom{m-n}{d+1-n}\Bigr).
\end{aligned}
\]
Using the binomial identities
\[
\binom{m-1}{d-1}+\binom{m-1}{d}=\binom{m}{d},
\qquad
\binom{m-n}{d-n}+\binom{m-n}{d+1-n}=\binom{m-n+1}{d+1-n},
\]
we obtain
\[
\beta_d(T/J_S) = d\binom{m}{d+1} - \binom{m}{d} + \binom{m-n+1}{d-n+1},
\]
which is exactly the claimed formula for $d\ge 2$.
\end{proof}

Having computed the Betti numbers of the Ap\'ery toric ideal $J_S$ of $S=\Gamma_m(n)$, we now compute the Betti numbers of the defining toric ideal $I_S$. 
By Proposition \ref{prop:tensorwithKoszul}, a minimal free resolution of \(T/J_S\) can be obtained as the tensor product of a minimal free resolution of \(T/I_ST\) with a Koszul complex on a single element. Therefore, we have the following relations among the Betti numbers of $T/J_S$ and $Q/I_S$.
\begin{equation}\label{eq:koszul_complex}
 \beta_{d}^T(T/J_S) = \beta_{d}^Q(Q/I_S)+\beta_{d-1}^Q(Q/I_S) \text{\ for all\ } d \geq 1,   \text{ and } \beta_0^T(T/J_S)=\beta_0^Q(Q/I_S).
\end{equation} 
For convenience, we drop the superscripts $T$ and $Q$. The above equation forces 
\begin{equation}\label{eq:generating-function}
\sum\limits_{i=0}^{\infty} \beta_{i}(T/J_S) z^{i} = \left(\sum_{i=0}^{\infty} \beta_{i}(Q/I_S) z^{i} \right)(1+z)
\end{equation}
Hence, from Theorem \ref{thm:betti-ideal} and the relations above, we can compute the Betti numbers of the defining ideal $I_S$. 

\begin{theorem}\label{thm:defining-ideal-for-n}
Let $2\leq n<m$ be integers and $S=\Gamma_m(n)$. Then $\beta_0(Q/I_S)=1$, and for $d \geq 1$
\begin{equation*}
\beta_{d}(Q/I_S) = d\binom{m}{d+1} - m\binom{m-2}{d-1} + \binom{m-n}{d-n+1}.
\end{equation*}
The Poincar\'e series of $R$ over $Q$ is
\[
\mathcal{P}_R^Q(z)=1+m(1+z)^{m-2}-\frac{(1+z)^m-1}{z}+z^{n-1}(1+z)^{m-n}.
\]
\end{theorem}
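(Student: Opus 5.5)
We plan to derive the statement purely from Theorem \ref{thm:betti-ideal} and the Koszul relation \eqref{eq:koszul_complex}. For $S=\Gamma_m(n)$ the Ap\'ery set is $\{0,a_1,\ldots,a_{m-1}\}$ with $a_n=2m+n=a_1+a_{n-1}$ (when $n\geq 2$, using $a_1=m+1$, $a_{n-1}=m+n-1$; for $n=2$ this is $2a_1$). Every other $a_i$ is a minimal generator, so exactly one nonzero Ap\'ery element is not a minimal generator. Thus $r=1$ in Proposition \ref{prop:tensorwithKoszul}, and we have $\beta_d(T/J_S)=\beta_d(Q/I_S)+\beta_{d-1}(Q/I_S)$. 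Inverting $(1+z)$ in \eqref{eq:generating-function} gives
\[
\beta_d(Q/I_S)=\sum_{k=0}^{d}(-1)^{d-k}\beta_k(T/J_S).
\]
Instead of summing alternating binomials directly, it is cleaner to guess the answer (the claimed formula) and verify it by checking the recursion $\beta_d(Q/I_S)+\beta_{d-1}(Q/I_S)=\beta_d(T/J_S)$ together with $\beta_0(Q/I_S)=1$. Uniqueness then follows from the recursion.

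The verification for $d\geq 2$ uses three identities. First,
\[
d\binom{m}{d+1}+(d-1)\binom{m}{d}=d\binom{m+1}{d+1}-\binom{m}{d}.
\]
We need this to combine with $-m\bigl[\binom{m-2}{d-1}+\binom{m-2}{d-2}\bigr]=-m\binom{m-1}{d-1}$ so as to produce $d\binom{m}{d+1}-\binom{m}{d}$. Using $m\binom{m-1}{d-1}=d\binom{m}{d}$, this reduces to checking
\[
d\binom{m+1}{d+1}-d\binom{m}{d}=d\binom{m}{d+1},
\]
which is Pascal's rule. Second, $\binom{m-n}{d-n+1}+\binom{m-n}{d-n}=\binom{m-n+1}{d-n+1}$, again by Pascal's rule. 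Together these give $\beta_d(T/J_S)$ as in Theorem \ref{thm:betti-ideal}.

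The case $d=1$ must be checked separately against the special formula. The claimed expression gives $\beta_1(Q/I_S)=\binom{m}{2}-m+\binom{m-n}{2-n}$. Adding $\beta_0=1$ yields $\binom{m-1}{2}+\binom{m-n}{2-n}$, as required, since $\binom{m}{2}-m+1=\binom{m-1}{2}$.

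For the Poincar\'e series, we sum $1+\sum_{d\geq1}\beta_d z^d$ term by term:
\begin{itemize}
\item $\sum_{d\geq1} d\binom{m}{d+1}z^d$ equals $\frac{d}{dz}\bigl[\bigl((1+z)^m-1\bigr)/z\bigr]\cdot z$. Equivalently, writing $\sum_d d\binom{m}{d+1}z^d=\sum_j (j-1)\binom{m}{j}z^{j-1}$, it equals $m(1+z)^{m-1}-\frac{(1+z)^m-1}{z}$.
\item $\sum_{d\geq 1} m\binom{m-2}{d-1}z^d=mz(1+z)^{m-2}$.
\item $\sum_{d} \binom{m-n}{d-n+1}z^d=z^{n-1}(1+z)^{m-n}$; the $d=0$ term vanishes because $n\geq 2$.
\end{itemize}
Combining, $m(1+z)^{m-1}-mz(1+z)^{m-2}=m(1+z)^{m-2}$, which gives the stated series.

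The main obstacle is the bookkeeping at the boundary: confirming $r=1$ (in particular that $a_n$ is not a minimal generator while all other $a_i$ are), and handling the separate $d=1$ formula and the binomial conventions with negative lower index.
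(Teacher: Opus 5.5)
Your proof is correct and takes essentially the paper's route: both derive the result from Theorem \ref{thm:betti-ideal} together with the Koszul relation \eqref{eq:koszul_complex} for $r=1$, and you usefully make $r=1$ explicit via $a_n=a_1+a_{n-1}$. The difference is only in bookkeeping: the paper divides the generating function of $\beta_\bullet(T/J_S)$ by $1+z$ and extracts coefficients, while you check that the claimed closed form satisfies the recursion (by Pascal's rule, with $d=1$ handled separately) and then sum it to obtain $\mathcal{P}_R^Q(z)$.
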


\begin{proof}
Clearly, $\beta_0(Q/I_S)=1$. 

Now, from Theorem \ref{thm:betti-ideal} and the fact that for $n \ge 2$, $\binom{m-n+1}{2-n} = \binom{m-n}{2-n}$, we get 
\begin{align*}
\beta_{1}(T/J_S) &= \binom{m-1}{2} + \binom{m-n}{2-n} = \binom{m}{2} - \binom{m-1}{1} + \binom{m-n+1}{2-n}  {\text{ \ \ \ and }}\\
\beta_{d}(T/J_S) &= d\binom{m}{d+1} - \binom{m}{d} + \binom{m-n+1}{d-n+1} {\text{ \ \ \ for }} d \geq 2.
\end{align*}
Using the above identities, we get the following:
\begin{equation}\label{eq:expansion}
\sum_{i \ge 0} \beta_{i}(T/J_S) z^{i} = 1 + z + \sum_{i \ge 1} i \binom{m}{i+1} z^{i} - \sum_{i \ge 1} \binom{m}{i} z^{i} + \sum_{i \ge 1} \binom{m-n+1}{i-n+1} z^{i}
\end{equation}
We have 
\begin{align*}
\sum_{i \ge 1} i \binom{m}{i+1} z^{i} &= \sum_{i \ge 0} i \binom{m}{i+1} z^{i} \\
&= \sum_{i \ge 0} \left( (i+1) \binom{m}{i+1} z^{i} - \binom{m}{i+1} z^{i} \right) \\
&= \frac{1}{z} \sum_{i \ge 0} (i+1) \binom{m}{i+1} z^{i+1} - \frac{1}{z} \sum_{i \ge 0} \binom{m}{i+1} z^{i+1} \\
&= \frac{1}{z} \left( m z (1+z)^{m-1} \right) - \frac{1}{z} \left( (1+z)^{m} - 1 \right) \\
&= m (1+z)^{m-1} - \frac{(1+z)^{m} - 1}{z}.
\end{align*}
Therefore, rewriting the three summations on the right-hand side of Equation \eqref{eq:expansion} as  
\begin{equation}
\sum_{i \ge 1} i \binom{m}{i+1} z^{i} =  m (1+z)^{m-1} - \frac{(1+z)^{m} - 1}{z},
\end{equation}
\begin{equation}
\sum_{i \geq 1} \binom{m-n+1}{i-n+1} z^{i} = z^{n-1} \sum_{i \ge 1} \binom{m-n+1}{i-n+1} z^{i-n+1} 
= z^{n-1} (1+z)^{m-n+1},
\end{equation}
and
\begin{equation}
\sum_{i \geq 1} \binom{m}{i} z^{i} = (1+z)^{m} - 1,
\end{equation}
we obtain
\begin{align*}
\sum_{i\geq 0} \beta_{i}(T/J_S) z^{i} &= 1 + z + m (1+z)^{m-1} - \frac{(1+z)^{m} - 1}{z} - \left( (1+z)^{m} - 1 \right) + z^{n-1} (1+z)^{m-n+1}.
\end{align*}
By Equation \eqref{eq:generating-function}, we get
\begin{align*}
\sum_{i \geq 0} \beta_{i}(Q/I_S) z^{i} &= \frac{1}{1+z} \left( 1 + z + m (1+z)^{m-1} - \frac{(1+z)^{m} - 1}{z} - ((1+z)^{m} - 1) + z^{n-1} (1+z)^{m-n+1} \right) \\
&= 1 + m (1+z)^{m-2} - \frac{(1+z)^{m} - 1}{z} + z^{n-1} (1+z)^{m-n}.
\end{align*}

Thus, for each $i \geq 1$, we have
\begin{align*}
\beta_i(Q/I_S)
&= m\binom{m-2}{i} - \binom{m}{i+1} + \binom{m-n}{i-n+1} \\
&= m\binom{m-1}{i} - m\binom{m-2}{i-1} - \binom{m}{i+1} + \binom{m-n}{i-n+1} \\
&= i\binom{m}{i+1} - m\binom{m-2}{i-1} + \binom{m-n}{i-n+1}.
\qedhere
\end{align*}
\end{proof}

As an application of Theorem~\ref{thm:defining-ideal-for-n}, we prove Conjecture \ref{conj:GSSSconj}(a).
\begin{corollary}\label{cor:betti-compare}
 Let $2\leq n<m$, $S=\Gamma_m(n)$, $S_1=\Gamma_m(1)$, and $Q_1=\KK[x_0,x_2,\ldots,x_{m-1}]$. Then for each $1\leq d \leq m-3$, we have
\[
\beta_d(Q/I_S)
=
\beta_d(Q_1/I_{S_1})
+
\binom{m-n}{d+1-n},
\]
and $\beta_{m-2}(Q/I_S)=m-n$.
\end{corollary}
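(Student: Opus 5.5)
The plan is to compare two closed formulae: the one for $\Gamma_m(n)$, already given by Theorem~\ref{thm:defining-ideal-for-n}, and one for $\Gamma_m(1)$, which I still need to establish. Note that Theorem~\ref{thm:defining-ideal-for-n} does not cover $n=1$, because its derivation used $b_{i,j}=0\iff i+j=n$. For $\Gamma_m(1)$ we instead have $a_1=2m+1$, so $b_{i,j}=0\iff i+j=m+1$.

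The target identity for $\Gamma_m(1)$ is
\[
\beta_d(Q_1/I_{S_1}) = d\binom{m}{d+1}-m\binom{m-2}{d-1}\qquad (1\le d\le m-3).
\]
This is what the statement forces: Theorem~\ref{thm:defining-ideal-for-n} gives $\beta_d(Q/I_S)-\binom{m-n}{d-n+1}=d\binom{m}{d+1}-m\binom{m-2}{d-1}$, and $\binom{m-n}{d-n+1}=\binom{m-n}{d+1-n}$. I would take this identity, together with $\beta_{m-2}(Q_1/I_{S_1})=1$, from the explicit minimal free resolution of $\KK[\Gamma_m(1)]$ constructed in \cite{gsss25}. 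The value $\beta_{m-2}=1$ is consistent with $\KK[\Gamma_m(1)]$ being Gorenstein of codimension $m-2$.

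If the citation does not directly yield this closed form, I would re-derive it by the method of Lemma~\ref{lem:alpha-d}, in three steps.
\begin{enumerate}
\item Compute $\alpha_d(\FF_\bullet)$ for $S_1$. The unit entries of $\overline{M_d}$ now come from pairs with $i+j\equiv 1 \pmod m$, i.e.\ $i+j=m+1$, which produce terms $e_{1,A\setminus\{j\}}$.
\item Apply Lemma~\ref{lem:alpha-gives-betti-nos} to get $\beta_d(T/J_{S_1})$.
\item Divide by $(1+z)$ via Proposition~\ref{prop:tensorwithKoszul}, exactly as in the proof of Theorem~\ref{thm:defining-ideal-for-n}.
\end{enumerate}
The row analysis in step 1 mirrors Case~1 of Lemma~\ref{lem:alpha-d}, with the distinguished index $1$ in place of $n$. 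There is an added wrinkle: $e_{1,B}$ is a valid basis element only when $1\in B$ or $B=\emptyset$, so the expansion \eqref{eq:eiA} is needed almost always. This re-derivation is the main obstacle. Most likely the rank turns out to be $\binom{m-1}{d-1}$ minus a boundary correction that only matters at $d=m-1$, which would account for the extra $1$ in top degree.

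Granting the formula for $S_1$, the first claim is immediate for $1\le d\le m-3$: subtracting it from Theorem~\ref{thm:defining-ideal-for-n} leaves exactly $\binom{m-n}{d+1-n}$.

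For $d=m-2$, evaluate Theorem~\ref{thm:defining-ideal-for-n} directly:
\[
(m-2)\binom{m}{m-1}-m\binom{m-2}{m-3}+\binom{m-n}{m-1-n}=(m-2)m-m(m-2)+(m-n)=m-n.
\]
Finally, I would check that this agrees with Conjecture~\ref{conj:GSSSconj}(a). For $d\le n-2$ the extra term $\binom{m-n}{d+1-n}$ vanishes since $d+1-n<0$, so the two cases of the conjecture merge into the single formula.
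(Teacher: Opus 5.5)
Your main line is the paper's route: the formula of Theorem~\ref{thm:defining-ideal-for-n} for $\Gamma_m(n)$, the Betti numbers of $\Gamma_m(1)$ taken from \cite{gsss25}, and a direct evaluation at $d=m-2$. Your evaluation at $d=m-2$ is correct.

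The step you leave to the citation is, however, the whole content of the paper's proof. \cite[Corollary~2.7]{gsss25} states $\beta_d(Q_1/I_{S_1})=\frac{md}{m-d-1}\binom{m-2}{d+1}$ for $1\le d\le m-3$, together with $\beta_{m-2}(Q_1/I_{S_1})=1$. That is not your closed form, so you still have to verify
\[
\frac{md}{m-d-1}\binom{m-2}{d+1}=\frac{m(m-2)!}{(d+1)!\,(m-d-1)!}\,d(m-d-2)=d\binom{m}{d+1}-m\binom{m-2}{d-1}.
\]
This follows by splitting $d(m-d-2)=d(m-1)-d(d+1)$. It is a two-line factorial computation, so no re-derivation through Ap\'ery resolutions is needed.

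If you ever do pursue that fallback, its starting point is wrong. The value $2m+1$ is not in $\Gamma_m(1)$, because $m+1$ is not a generator and no two generators sum to $2m+1$. In fact $a_1=3m+1=(m+2)+(2m-1)$. It is this value that makes $b_{i,j}=0$ exactly when $i+j=m+1$; with $a_1=2m+1$ those $c_{i,j}$ would all equal $1$.
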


\begin{proof} 
The statement about $\beta_{m-2}(Q/I_S)$ follows directly from Theorem~\ref{thm:defining-ideal-for-n}. Now, by 
\cite[Corollary~2.7]{gsss25}, we have the following:
\[
\beta_d(Q_1/I_{S_1}) = \frac{md}{m-d-1}\binom{m-2}{d+1}, 
\quad 1\leq d\leq m-3, 
\qquad \beta_{m-2}(Q_1/I_{S_1}) = 1.
\]
Now, for every $2 \leq n < m$ and $1 \leq d \leq m-3$, consider
\begin{align*}
\beta_d(Q_1/I_{S_1})
+
\binom{m-n}{d+1-n}
&= \frac{md}{m-d-1}\binom{m-2}{d+1}
+
\binom{m-n}{d+1-n} \\
&= \frac{md(m-2)!(m-d-2)}
{(d+1)!(m-d-1)!}
+
\binom{m-n}{d+1-n} \\
&= \frac{m(m-2)!}{(d+1)!(m-d-1)!}
\bigl(d(m-1)-d(d+1)\bigr)
+
\binom{m-n}{d+1-n} \\
&= d\frac{m!}{(d+1)!(m-d-1)!}
-
m\frac{(m-2)!}{(d-1)!(m-d-1)!}
+
\binom{m-n}{d+1-n} \\
&= d\binom{m}{d+1}
-
m\binom{m-2}{d-1}
+
\binom{m-n}{d+1-n},
\end{align*}
which agrees with the formula obtained in Theorem \ref{thm:defining-ideal-for-n}.
\end{proof}

\subsection{Betti numbers of the defining toric ideal of \texorpdfstring{$S=\Gamma_m(2,n)$}{Sm(2,n)}} \label{subsec-2n}

Let $S=\Gamma_m(2,n)$, with $4 \leq n <m$. In this case, we have $a_i=m+i$ for $i\in\{0,\ldots,m-1\}\setminus\{2,n\}$, $a_2=2m+2$, and $a_n=2m+n$. As a result, $b_{ij}=0$ iff $i=j=1$ or $i+j=n$ and $i\in\{1,\ldots,n-1\}\setminus\{2,n-2\}.$ We compute the Betti numbers of the defining toric ideals of $S$ by following the same strategy as in the previous subsection.

\begin{lemma}\label{lem:alphad-2n}
    Let $4 \leq n<m$,  $S=\Gamma_m(2,n)$, and $\FF_{\bullet}$ be the free resolution of the Ap\'ery toric quotient $T/J_S$ as in Construction \ref{cons:ideal-res}. 

    For $n \geq 6$, we have
    
\[
 \alpha_d(\FF_{\bullet}) =\begin{cases}
        0 & {\text{if }} \ d=1 \\
        2m-4 & \text{if } \ d=2 \\
        \binom{m-2}{d-1}+\binom{m-1}{d-1} - \binom{m-n+3}{d-n+3} & {\text{if }}\  d \geq 3.\end{cases}
\]
    For $n=4, 5$, we have
    
\[
 \alpha_d(\FF_{\bullet}) =\begin{cases}
        0 & {\text{if }} \ d=1 \\
        2m-5 & \text{if } \ d=2 \\
        2\binom{m-2}{d-1} & {\text{if }}\  d \geq 3.\end{cases}
\]
    
\end{lemma}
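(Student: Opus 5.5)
We follow the strategy of Lemma \ref{lem:alpha-d}. First we list the unit entries of $\overline{M_d}$, then we split the rows into families whose contributions to the rank can be separated, and finally we count. As before, $\alpha_1(\FF_\bullet)=0$ because every entry of $M_1$ lies in $\mathfrak m$.

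\textbf{Step 1: locating the unit entries.} Fix $d\ge2$. A unit entry in $\overline{\partial}_d(e_{i,A})$ comes only from a term $y^{b_{i,j}}e_{i+j,A\setminus\{j\}}$ with $b_{i,j}=0$. For $\Gamma_m(2,n)$ this happens in exactly two situations.
\begin{enumerate}[label=(\roman*)]
\item $i=j=1$. Then $1\in A$, the column is $e_{1,A}$, and the term is $\pm e_{2,A\setminus\{1\}}$.
\item $i+j=n$ with $i,j\in\{1,\dots,n-1\}\setminus\{2,n-2\}$. The term is $\pm e_{n,A\setminus\{j\}}$.
\end{enumerate}
So, before expansion, only rows labelled $e_{2,\ast}$ and $e_{n,\ast}$ receive units. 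Invalid symbols $e_{2,C}$ with $\min C>2$, and $e_{n,C}$ with $\min C>n$, are expanded via \eqref{eq:eiA}. Each such expansion yields a bundle of rows $e_{\ell,(C\cup\{2\})\setminus\{\ell\}}$, respectively $e_{\ell,(C\cup\{n\})\setminus\{\ell\}}$, and the rows in one bundle are equal up to sign. This is exactly Case 2 of Lemma \ref{lem:alpha-d}, so each bundle contributes $1$ to the rank.

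\textbf{Step 2: the two contributions.} Guided by the Remark after Lemma \ref{lem:alpha-d}, we index potential pivots by sets.

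For type (i), each $B\subseteq\{2,\dots,m-1\}$ with $|B|=d-1$ gives the column $e_{1,B\cup\{1\}}$. Its image contains the symbol $\pm e_{2,B}$, possibly expanded. This accounts for $\binom{m-2}{d-1}$ pivots.

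For type (ii), we repeat Case 1 of Lemma \ref{lem:alpha-d}, choosing $j$ as the least element of $\{1,\dots,n-1\}\setminus(B\cup\{2,n-2\})$. A valid column $e_{n-j,B\cup\{j\}}$ exists exactly when $\{1,\dots,n-1\}\setminus\{2,n-2\}\not\subseteq B$. This should account for $\binom{m-1}{d-1}$ minus the number of $B$ containing that set.

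There is one complication. The column $e_{1,A}$ with $n-1\in A$ carries both a type (i) unit (from $j=1$) and a type (ii) unit (from $j=n-1$, since $b_{1,n-1}=0$). Likewise, the pivot column chosen for type (ii) can carry a type (i) unit. So the two families are not automatically separated. We plan to handle this by ordering the pivots: first use the columns $e_{1,A}$ to clear all $e_{2,\ast}$ rows, then check that each type (ii) pivot column, after subtracting suitable type (i) columns, still has a unit in its designated $e_{n,\ast}$ row (or bundle). When $1\in B$ or $j=1$ is forced, this requires choosing $j\ne1$ whenever possible.

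The obstruction sets should be exactly those $B$ with $\{1,3,\dots,n-3,n-1\}\subseteq B$, together with a correction coming from $B$ containing $\{3,\dots,n-3\}$ whose only available pivot collides with the type (i) pivot. Counting these should produce $\binom{m-n+2}{d-n+2}+\binom{m-n+2}{d-n+3}=\binom{m-n+3}{d-n+3}$.

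For $d=2$ we expect the boundary effects to give the stated value $2m-4$ directly, and we will verify this by hand.

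\textbf{Step 3: the cases $n=4,5$.} Here $\{2,n-2\}$ has one or two elements, namely $\{2\}$ or $\{2,3\}$. The admissible pairs are $\{1,3\}$ for $n=4$ and $\{1,4\}$ for $n=5$, so the type (ii) units always come from $j\in\{1,n-1\}$ and always interact with type (i). We redo Step 2 concretely and expect the type (ii) contribution to collapse to $\binom{m-2}{d-1}$, giving $2\binom{m-2}{d-1}$ for $d\ge3$ and $2m-5$ for $d=2$.

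\textbf{Main obstacle.} The main difficulty is Step 2's disentangling of the overlapping units from the columns $e_{1,A}$ with $n-1\in A$. We must prove that the chosen pivots are genuinely independent, and that the remaining rows lie in their span, so that the rank is not larger than claimed. We would first attempt a triangularity argument: order the rows by (type, then $\min$ of the label set) and show that the pivot matrix is unitriangular. We would then sanity-check the resulting counts against small $m$ via Lemma \ref{lem:alpha-gives-betti-nos} and the conjectured Betti numbers.
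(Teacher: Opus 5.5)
\textbf{Verdict: there is a genuine gap.} Your strategy is the paper's. You use pivot columns $e_{1,B\cup\{1\}}$ for the rows $e_{2,\ast}$ and pivot columns $e_{n-j,B\cup\{j\}}$ with $j$ minimal for the rows $e_{n,\ast}$. Invalid labels are treated as bundles, and you correct for the collision at $j=n-1$. For $n\geq6$ and $d\geq3$, your count $\binom{m-2}{d-1}+\binom{m-1}{d-1}-\binom{m-n+2}{d-n+2}-\binom{m-n+2}{d-n+3}$ agrees with the lemma.

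\textbf{What is missing.} As written this is a plan rather than a proof. The independence of the pivots, and the fact that every other row is zero or a duplicate, are the whole content of the lemma. You leave them, together with $d=2$ and $n=4,5$, as expectations.

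\textbf{The false claim in Step 1.} It is exactly what hides the $d=2$ correction. The rows obtained by expanding an invalid $e_{2,C}$ are \emph{not} all equal up to sign when $n\in C$. That bundle contains $e_{n,(C\cup\{2\})\setminus\{n\}}$, which also carries the unit of its own type (ii) pivot $e_{n-1,((C\cup\{2\})\setminus\{n\})\cup\{1\}}$.
\begin{itemize}
\item When $|C|\geq2$, you can instead use a bundle row $e_{s,(C\cup\{2\})\setminus\{s\}}$ with $s\neq n$; such a row has a single nonzero entry.
\item When $C=\{n\}$, which happens only for $d=2$, the bundle is the single row $e_{2,\{n\}}=e_{n,\{2\}}$. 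The type (i) pivot $e_{1,\{1,n\}}$ and the type (ii) pivot $e_{n-1,\{1,2\}}$ then hit the same row.
\end{itemize}
So for $n\geq6$ your scheme gives $(m-2)+(m-1)=2m-3$ at $d=2$, one too many. For $m=7$, $n=6$, a direct computation gives $\alpha_2=10$, not $11$. For $n=4,5$, the value $2m-5$ needs this coincidence \emph{and} the singleton collision $B=\{n-1\}$.

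\textbf{How to close the gap.} A triangular ordering is not needed; the paper peels rows and columns. Deleting a row that equals another up to sign leaves the rank unchanged. If a column has a single nonzero entry, deleting that column and the row containing the entry lowers the rank by exactly one. Proceed as follows.
\begin{enumerate}
\item Let $i=\min(\{1,3,\dots,n-3,n-1\}\setminus B)$. If $i\neq n-1$, the column $e_{n-i,B\cup\{i\}}$ has a single unit, in $e_{n,B}$ or in its bundle.
\item If $i=n-1$, then $\{1,3,\dots,n-3\}\subseteq B$, so $e_{n,B}$ has its only nonzero entry in column $e_{1,B\cup\{n-1\}}$. That column's other units lie in $e_{2,(B\setminus\{1\})\cup\{n-1\}}$ or its bundle. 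Those rows are either duplicates of $e_{n,B}$ or rows already removed in step 1. This is where your collision term comes from.
\item After steps 1 and 2, each column $e_{1,B\cup\{1\}}$ with $1\notin B$, $\{3,\dots,n-3,n-1\}\not\subseteq B$ and $B\neq\{n\}$ has a single surviving entry, up to duplicate bundle rows.
\item All remaining rows vanish. These are the rows $e_{n,B}$ with $\{1,3,\dots,n-1\}\subseteq B$, the rows $e_{2,B}$ with $1\in B$, and the rows $e_{k,\ast}$ with $k\neq2,n$ that lie in no bundle.
\end{enumerate}

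\textbf{Two smaller slips.}
\begin{itemize}
\item The collision occurs for $j=n-1$ (column index $n-j=1$), not for $j=1$.
\item For $n=4,5$, the pivot $e_{n-1,B\cup\{1\}}$ coming from $j=1$ carries no type (i) unit. So the type (ii) units do not ``always interact'' with type (i).
\end{itemize}
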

\begin{proof}
    We build on the ideas discussed in the proof of Lemma \ref{lem:alpha-d}. Let $6\leq n<m.$

    Since all the entries of $M_1$ are in the maximal ideal, we conclude that $\alpha_1(\FF_\bullet)=0$. 

    For $d\geq 2$, we fix a basis element $e_{k,\:B}$ of $F_{d-1}$ and investigate when a nonzero entry appears in the row $e_{k,\:B}$ of $\bar{M_d}$. We consider two cases: $k=n$ and $k=2$. 

    \textbf{Case 1:} $k=n$.

    A unit entry in row $e_{n,\:B}$ arises from either
    \begin{enumerate}[leftmargin=*, label=(\alph*)]
        \item a column $e_{i,\:A}$ with $i,j\in\{1,3,4,\ldots,n-3,n-1\}$, $j\in A$, $i+j=n$, and $B=A\setminus\{j\}$, or
        \item a column $e_{1,\:A}$ with $i=1$, $1,n\in A$, $2<\min(A\setminus \{1\})$, and $B=A\cup\{2\}\setminus\{1,n\}$. 
    \end{enumerate}
    Hence, if $\{1,3,4,\ldots,n-3,n-1\}\subset B$, the row $e_{n,\:B}$ is zero in $\bar{M_d}$.

    Suppose $\{1,3,4,\ldots,n-3,n-1\}\not\subset B$. Let $\Gamma=\{1,3,4,\ldots,n-3,n-1\}\setminus B$ and $i=\min(\Gamma)$. Consider the column $\mathbf{c}_B\coloneqq e_{n-i,\:A}$, where $A=B\cup\{i\}$. We need to ensure that $n-i\geq\min(A)$. Indeed, this follows as $\min(A)=1$.
    
    Then the image under $\bar\partial_d$ of the column $\mathbf{c}_B$ contains the term $\pm e_{n,\:B}$. However, we need to exercise care on whether the column $\mathbf{c}_B$ has any other nonzero entries. We split the argument into two subcases:

    \textit{Subcase 1:} $n-i\neq1.$ 

    Here, as $n-i>1$, it is not possible to have $n-i+j=2$ for $j\in A$. Therefore, the only nonzero entry appearing in the column $\mathbf{c}_B$ is in the row $e_{n,\:B}$. 

    \textit{Subcase 2:} $n-i=1.$

    Here, as $i=n-1$ and $i=\min(\Gamma)$, we must have $1\in B\subset A$. This forces the column $\mathbf{c}_B=e_{1,A}$ to have a unit in the row $e_{2,\:A\setminus\{1\}}$ in addition to the unit in the row $e_{n,\:B}$. All other entries of $\mathbf{c}_B$ are zero. 

    Assume that the row $e_{2,\:A\setminus\{1\}}$ is valid. The only unit in this row is in the column $\mathbf{c}_B$. Also, as $\{1,3,4,\ldots,n-3\}\subset B$, the only nonzero entry in the row $e_{n,\:B}$ is in the column $\mathbf{c}_B$. Hence, the rows $e_{n,\:B}$ and $e_{2,\:A\setminus\{1\}}$ are equal up to multiplication by $\pm 1$. 

    If $e_{2,A\setminus\{1\}}$ is invalid, expand it using \eqref{eq:eiA}. For each $l\in A\setminus\{1,n\}$, the resulting row $e_{l,(A\cup\{2\})\setminus\{1,l\}}$ equals the row $e_{n,B}$ up to sign. If $n\in A$, the additional row $e_{n,(A\cup\{2\})\setminus\{1,n\}}$ is counted separately in Subcase~1, since its set label does not contain $1$. 

    We will count the contribution of the row $e_{n,\:B}$ to the rank and be cautious to not count the row $e_{2,\:A\setminus\{1\}}$ later in our calculation. From the above discussion, we conclude that each row of the form $e_{n,\:B}$ with $\{1,3,4,\ldots,n-3,n-1\}\not\subset B$ contributes to the rank of $\bar{M}_d$.

    Observe that in the argument so far, we have not used the fact that $n\geq\min(B).$ If $n<\min(B)$, the row $e_{n,\:B}$ is not valid, but as in the proof of Lemma \ref{lem:alpha-d}, each of the rows $e_{l,\:B\cup\{n\}\setminus\{l\}}$ are equal up to sign and collectively contribute $1$ to the rank of $\bar{M}_d.$ Note that as $n<\min(B)$, we have $2\notin B$ and thus we are not exposing ourselves to the possibility of double-counting in the future. 

    Therefore, the contribution of the rows $e_{n,\:B}$ (valid or not) to the rank of $\bar{M}_d$ is the cardinality of the set 
\[
\{B\:|\: B\subset [m-1], \: |B|=d-1 \text{ and } \{1,3,4,\ldots,n-3,n-1\}\not\subset B\}, 
\]
 which equals 
\[
\binom{m-1}{d-1}-\binom{m-n+2}{d-n+2}.
\]
 We may now consider these rows to be ``removed" from the matrix $\bar{M}_d$ and compute the rank of the remaining submatrix.

    \textbf{Case 2: } $k=2.$

    A unit entry in row $e_{2,\:B}$ arises from a column $e_{1,A}$ with $B=A\setminus\{1\}.$ Hence, the row $e_{2,\:B}$ is zero if $1\in B$. 

    Suppose $1\notin B$. Then $e_{2,\:B}$ has a unit in the column $e_{1,B\cup\{1\}}$. We need to be cautious that the contribution of $e_{2,\:B}$ to the rank hasn't been considered already. 

    Suppose $\{3,4,\dots,n-3,n-1\}\subset B$. Then the column $e_{1,B\cup\{1\}}$ has two nonzero entries (before expansion of invalid rows), in the rows $e_{2,\:B}$ and $e_{n,\:B\cup\{1\}\setminus\{n-1\}}$ respectively. These are precisely the rows already considered in Subcase 2 of Case 1 and thus shouldn't be considered again in the rank computation. 

    So we assume $\{3,4,\dots,n-3,n-1\}\not\subset B$. Then each of the valid rows $e_{2,\:B}$ contributes independently to the rank of $\bar{M}_d$. However, if $e_{2,\:B}$ is invalid, then each of the rows $e_{l,\:B\cup\{2\}\setminus\{l\}}$ has a unit in the column $e_{1,B\cup\{1\}}$. If $l=n$, the row $e_{l,\:B\cup\{2\}\setminus\{l\}}$ is one of the rows ``removed" from the matrix $\bar{M}_d$ at the end of the discussion in Case 1. 

    If $B\neq\{n\}$, then choose $s\in B$ such that $s\neq n$. The row $e_{s,B\cup\{2\}\setminus\{s\}}$ has only one nonzero entry (in the column $e_{1,B\cup\{1\}}$) and is equal (up to multiplication by $\pm 1$) to each of the other rows $e_{l,\:B\cup\{2\}\setminus\{l\}}$ for $l\in B\setminus\{n\}$. Hence in this case, $e_{2,\:B}$ contributes one to the rank as usual.

    However, if $B=\{n\}$, then $e_{2,\{n\}}=e_{n,\{2\}}$. So the contribution of $e_{2,\:B}$ to the rank has already been considered in Case 1.

    Therefore, the contribution of the rows $e_{2,\:B}$ (valid or not) to the rank of $\bar{M}_d$ is the cardinality of the set 
    
\[
\{B\:|\:B\subset[m-1],\: |B|=d-1,\: 1\notin B, \: \{3,4,\ldots,n-3,n-1\}\not\subset B \text{ and } B\neq\{n\}\},
\]
    which equals $m-3$ for $d=2$, and  
    
\[
\binom{m-2}{d-1}-\binom{m-n+2}{d-n+3}
\]
 for $d>2$.
    Thus, $\alpha_2=2m-4$ and 
    \begin{align*}
        \alpha_d &=\binom{m-1}{d-1}-\binom{m-n+2}{d-n+2}+\binom{m-2}{d-1}-\binom{m-n+2}{d-n+3}\\
        &=\binom{m-1}{d-1}+\binom{m-2}{d-1}-\binom{m-n+3}{d-n+3}
    \end{align*}
    for $d>2$.

    For $n=5$, the set $\{1,3,\ldots,n-3,n-1\}$ is replaced by the set $\{1,4\}$. As in the above discussion, this set has cardinality $n-3$ and hence, most of the above computations work identically. However, for $d=2$, the contribution of the rows $e_{2,\:B}$ to the rank of $\bar{M_d}$ is now the cardinality of the set 
\[
\{B\:|\:B\subset[m-1],\: |B|=d-1,\: 1\notin B, \: 4\notin B \text{ and } 5\notin B\},
\]
 which is $m-4$. Thus, $\alpha_2=2m-5$ and as before,
    \begin{align*}
        \alpha_d&=\binom{m-1}{d-1}+\binom{m-2}{d-1}-\binom{m-n+3}{d-n+3}\\
        &=\binom{m-1}{d-1}+\binom{m-2}{d-1}-\binom{m-2}{d-2}\\
        &=2\binom{m-2}{d-1}.
    \end{align*}

    For $n=4$, the key difference is that the set $\{1,3,\ldots,n-3,n-1\}$ is replaced by the set $\{1,3\}$, which has cardinality $n-2$ (and not $n-3$ as before). All the arguments discussed above hold identically. 

    Hence, the $k=n$ case contributes 
\[
\binom{m-1}{d-1}-\binom{m-3}{d-3}
\]
 to the rank, and the $k=2$ case contributes $m-4$ for $d=2$ and 
\[
\binom{m-2}{d-1}-\binom{m-3}{d-2}
\]
 for $d>2$. Thus, $\alpha_2=2m-5$ and 
    \begin{align*}
        \alpha_d&=\binom{m-1}{d-1}-\binom{m-3}{d-3}+\binom{m-2}{d-1}-\binom{m-3}{d-2}\\
        &=\binom{m-1}{d-1}+\binom{m-2}{d-1}-\binom{m-2}{d-2}\\
        &=2\binom{m-2}{d-1}
    \end{align*}
    for $d>2$.
\end{proof}

With the help of the lemma above, we now calculate the Betti numbers of the Ap\'ery toric ideal $J_S$ for $S=\Gamma_m(2, n)$, $n \geq 4$.

\begin{theorem}\label{thm:betti-ideal-2n}
Let $4\leq n<m$ be integers and $S=\Gamma_m(2,n)$. Then, for the Ap\'ery toric ideal $J_S$ of $S$, we have
\begin{enumerate}
    \item[(a)] $\beta_0(T/J_S)=1$.
    
    \item[(b)] If $n=4,5$, then
    \[
    \beta_d(T/J_S)=
    \begin{cases}
    \displaystyle \binom{m}{2}-2m+5, & d=1,\\[2mm]
    \displaystyle
    2\binom{m}{3}-(2m-5)-2\binom{m-2}{2}, & d=2,\\[2mm]
    \displaystyle
    d\binom{m}{d+1}
    -2\binom{m-1}{d},
    & d\ge3.
    \end{cases}
    \]

    \item[(c)] If $n\ge6$, then
    \[
    \beta_d(T/J_S)=
    \begin{cases}
    \displaystyle \binom{m}{2}-2m+4, & d=1,\\[2mm]
    \displaystyle
    2\binom{m}{3}
    -2\binom{m-1}{2}
    -\binom{m-2}{1}
    +\binom{m-n+3}{6-n},
    & d=2,\\[2mm]
\displaystyle d \binom{m}{d+1}- \binom{m}{d}-\binom{m-1}{d}+\binom{m-n+4}{d-n+4}
    & d\ge3.
    \end{cases}
    \]
\end{enumerate}
\end{theorem}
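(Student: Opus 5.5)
The plan mirrors the proof of Theorem \ref{thm:betti-ideal}. The complex $(\FF_\bullet,\partial_\bullet)$ of Construction \ref{cons:ideal-res} is a free resolution of $T/J_S$ with $\rank(F_d)=d\binom{m}{d+1}$. Lemma \ref{lem:alpha-gives-betti-nos} therefore gives
\[
\beta_d(T/J_S)=d\binom{m}{d+1}-\alpha_d(\FF_\bullet)-\alpha_{d+1}(\FF_\bullet)\quad\text{for } d\geq 1.
\]
The values of $\alpha_d$ are supplied by Lemma \ref{lem:alphad-2n}. Part (a) is immediate, since $F_0=T$ and $\partial_1$ has image in the maximal ideal. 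The remaining work is substitution and binomial simplification, split into the three ranges $d=1$, $d=2$ and $d\geq 3$ because of the special value of $\alpha_2$.

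For (b), with $n=4,5$:
\begin{enumerate}[leftmargin=*]
\item For $d=1$ we have $\alpha_1=0$ and $\alpha_2=2m-5$, giving $\binom{m}{2}-2m+5$.
\item For $d=2$ we subtract $\alpha_2=2m-5$ and $\alpha_3=2\binom{m-2}{2}$ from $2\binom{m}{3}$, which is exactly the stated expression.
\item For $d\geq3$ we use $\alpha_d+\alpha_{d+1}=2\bigl(\binom{m-2}{d-1}+\binom{m-2}{d}\bigr)=2\binom{m-1}{d}$ by Pascal's rule.
\end{enumerate}

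For (c), with $n\geq 6$:
\begin{enumerate}[leftmargin=*]
\item For $d=1$ we subtract $\alpha_2=2m-4$ from $\binom{m}{2}$.
\item For $d=2$ we subtract $\alpha_2=2m-4$ and $\alpha_3=\binom{m-2}{2}+\binom{m-1}{2}-\binom{m-n+3}{6-n}$ from $2\binom{m}{3}$. Matching the stated form then needs $2m-4+\binom{m-2}{2}+\binom{m-1}{2}=2\binom{m-1}{2}+\binom{m-2}{1}$, which follows from $\binom{m-1}{2}-\binom{m-2}{2}=m-2$. The term $\binom{m-n+3}{6-n}$ is retained as written; it vanishes unless $n=6$.
\item For $d\geq 3$ we add the Lemma \ref{lem:alphad-2n} expressions for $\alpha_d$ and $\alpha_{d+1}$ and apply Pascal's rule three times:
\[
\binom{m-1}{d-1}+\binom{m-1}{d}=\binom{m}{d},\qquad \binom{m-2}{d-1}+\binom{m-2}{d}=\binom{m-1}{d},
\]
\[
\binom{m-n+3}{d-n+3}+\binom{m-n+3}{d-n+4}=\binom{m-n+4}{d-n+4}.
\]
This yields $d\binom{m}{d+1}-\binom{m}{d}-\binom{m-1}{d}+\binom{m-n+4}{d-n+4}$.
\end{enumerate}

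The main point requiring care is the boundary behaviour of Lemma \ref{lem:alphad-2n}. The formula for $d\geq3$ must be applied to $\alpha_{d+1}$ even when $d+1$ exceeds $m-1$, where $F_{d+1}=0$ and so $\alpha_{d+1}=0$. One has to check that the lemma's expression also vanishes there, which holds since $\binom{m-1}{d}=\binom{m-2}{d}=\binom{m-n+3}{d-n+4}=0$ for $d\geq m-1$ given the index bounds. Otherwise the pieces are routine.
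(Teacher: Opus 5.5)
Your proposal is correct and is exactly the paper's argument: substitute the values of $\alpha_d$ from Lemma~\ref{lem:alphad-2n} and $\rank(F_d)=d\binom{m}{d+1}$ into Lemma~\ref{lem:alpha-gives-betti-nos}, then simplify by Pascal's rule (your $d=2$ identity in case (c) checks out). One small slip in your boundary remark: at $d=m-1$ we have $\binom{m-1}{d}=\binom{m-n+3}{d-n+4}=1$, not $0$, so in case (c) the lemma's expression for $\alpha_m$ vanishes because these two terms cancel rather than termwise, and the conclusion is unaffected.
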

\begin{proof}
We proceed as in the proof of Theorem \ref{thm:betti-ideal}. Firstly, it is clear that $\beta_0(T/J_S)=1$ for all $n \geq 4$. Since the complex $(\FF_\bullet,\partial_\bullet)$ from Construction~\ref{cons:ideal-res} is a free resolution of the Ap\'ery toric quotient $T/J_S$, by  Lemma \ref{lem:alpha-gives-betti-nos}, given any $d \geq 1$, we have
\[
\beta_d(T/J_S)=\rank(F_d)-\alpha_d(\FF_\bullet)-\alpha_{d+1}(\FF_\bullet),
\]
where $\alpha_d(\FF_\bullet)=\rank(\overline{M}_d)$ is given by Lemma~\ref{lem:alphad-2n}, and by Construction \ref{cons:ideal-res}, we have \(\operatorname{rank}(F_d) = d\binom{m}{d+1}\). 

\textbf{Case 1}: $n=4,5$ \\ In this case, 
\[
 \beta_1(T/J_S)=\rank(F_1)- \alpha_1(\FF_\bullet) -\alpha_{2}(\FF_\bullet)=\binom{m}{2}-2m+5,
\]
 
\[
\beta_2(T/J_S)=\rank(F_2)- \alpha_2(\FF_\bullet) -\alpha_{3}(\FF_\bullet)=2\binom{m}{3}-(2m-5)-2 \binom{m-2}{2},
\]
 and for $d \geq 3$ we have
\[
\beta_d(T/J_S)=\rank(F_d)- \alpha_d(\FF_\bullet) -\alpha_{d+1}(\FF_\bullet)=d \binom{m}{d+1}-2 \binom{m-2}{d-1}-2 \binom{m-2}{d}= d\binom{m}{d+1}
    -2\binom{m-1}{d}.
\]

\textbf{Case 2}: $n\geq 6$ \\ In this case, 
\[
 \beta_1(T/J_S)=\rank(F_1)- \alpha_1(\FF_\bullet) -\alpha_{2}(\FF_\bullet)=\binom{m}{2}-2m+4,
\]
 
\begin{align*}
 \beta_2(T/J_S)=& \rank(F_2)- \alpha_2(\FF_\bullet) -\alpha_{3}(\FF_\bullet)\\
 &=2\binom{m}{3}-(2m-4)-\binom{m-2}{2}-\binom{m-1}{2}+\binom{m-n+3}{6-n} 
 \\
 &= 2\binom{m}{3}
-2\binom{m-2}{2}
-3\binom{m-2}{1}
+\binom{m-n+3}{6-n}  \\
&=  2\binom{m}{3}
    -2\binom{m-1}{2}
    -\binom{m-2}{1}
    +\binom{m-n+3}{6-n},
\end{align*}
and for $d \geq 3$ we have
\begin{align*}
  \beta_d(T/J_S)&=\rank(F_d)- \alpha_d(\FF_\bullet) -\alpha_{d+1}(\FF_\bullet) \\
  &=d \binom{m}{d+1}- \left[ \binom{m-2}{d-1}+\binom{m-1}{d-1} - \binom{m-n+3}{d-n+3} \right] - \left[ \binom{m-2}{d}+\binom{m-1}{d} - \binom{m-n+3}{d-n+4} \right] \\
  &= d\binom{m}{d+1}
-2\binom{m-2}{d}
-3\binom{m-2}{d-1}
-\binom{m-2}{d-2}
+\binom{m-n+3}{d-n+3}
+\binom{m-n+3}{d-n+4} \\
&=d \binom{m}{d+1}- \binom{m}{d}-\binom{m-1}{d}+\binom{m-n+4}{d-n+4}.
\qedhere
\end{align*}

\end{proof}

We are now ready to calculate the Betti numbers of the defining toric ideal of $\Gamma_m(2,n)$. We split the calculation into two separate theorems, corresponding to the two cases in the preceding theorem.
\begin{theorem}\label{thm:defining-ideal-for-24-25}
Let $n \in \{4,5\}$, $m>n$, and $S=\Gamma_m(2,n)$.  Then $\beta_0(Q/I_S)=1$, and for every $d\ge1$,
\[
\beta_d(Q/I_S)=d\binom{m-2}{d+1}.
\]
\end{theorem}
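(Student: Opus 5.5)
The plan is to deduce this from Theorem \ref{thm:betti-ideal-2n}(b) using the Koszul tensor-product relation of Proposition \ref{prop:tensorwithKoszul}, exactly as Theorem \ref{thm:defining-ideal-for-n} was deduced from Theorem \ref{thm:betti-ideal}. For $S=\Gamma_m(2,n)$, the nonzero Ap\'ery elements that are not minimal generators are precisely $a_2=2m+2$ and $a_n=2m+n$, so $r=2$. Hence the analogue of \eqref{eq:generating-function} reads
\[
\sum_{i\ge0}\beta_i(T/J_S)z^i=(1+z)^2\sum_{i\ge0}\beta_i(Q/I_S)z^i .
\]
It therefore suffices to check that the left side equals $(1+z)^2P(z)$, where
\[
P(z)=1+\sum_{d\ge1}d\binom{m-2}{d+1}z^d .
\]

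First I will compute $P(z)$ in closed form, using the same manipulation as in the proof of Theorem \ref{thm:defining-ideal-for-n} with $m$ replaced by $m-2$:
\[
P(z)=1+(m-2)(1+z)^{m-3}-\frac{(1+z)^{m-2}-1}{z}.
\]
Multiplying by $(1+z)^2$ gives
\[
(1+z)^2P(z)=(1+z)^2+(m-2)(1+z)^{m-1}-\frac{(1+z)^m-(1+z)^2}{z}.
\]

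Next I will package the left side. Define
\[
G(z)=\sum_{d\ge0}\Bigl(d\binom{m}{d+1}-2\binom{m-1}{d}\Bigr)z^d
=m(1+z)^{m-1}-\frac{(1+z)^m-1}{z}-2(1+z)^{m-1}.
\]
Its coefficients agree with $\beta_d(T/J_S)$ for $d\ge3$, so only the three low-degree corrections need to be computed from Theorem \ref{thm:betti-ideal-2n}(b):
\begin{itemize}
\item at $d=0$, the value is $1$ versus $-2$, a difference of $3$;
\item at $d=1$, the value is $\binom m2-2m+5$ versus $\binom m2-2m+2$, a difference of $3$;
\item at $d=2$, use $2\binom{m-1}{2}=2\binom{m-2}{2}+2m-4$; the difference is $1$.
\end{itemize}
Thus
\[
\sum_i\beta_i(T/J_S)z^i=(m-2)(1+z)^{m-1}-\frac{(1+z)^m-1}{z}+3+3z+z^2 .
\]

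The final step is to compare the two expressions. After cancelling the common terms $(m-2)(1+z)^{m-1}-(1+z)^m/z$, the comparison reduces to the identity
\[
(1+z)^2+\frac{(1+z)^2}{z}=\frac1z+3+3z+z^2,
\]
which holds. Dividing by $(1+z)^2$ in $\ZZ[[z]]$ then yields $\beta_0(Q/I_S)=1$ and $\beta_d(Q/I_S)=d\binom{m-2}{d+1}$ for $d\ge1$.

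The only real obstacle I expect is bookkeeping. Specifically, I need to justify carefully that $r=2$, i.e.\ that both $x_2$ and $x_n$ are the omitted variables, so that the Koszul factor is $(1+z)^2$. I also need to handle the irregular values at $d=1,2$ correctly. The algebra itself is routine.
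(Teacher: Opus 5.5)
Your proposal is correct and is essentially the paper's own argument. The paper likewise uses Proposition \ref{prop:tensorwithKoszul} with $r=2$ and reaches the same closed form $3+3z+z^2+(m-2)(1+z)^{m-1}-\frac{(1+z)^m-1}{z}$ through the same low-degree corrections. The only difference is cosmetic: the paper then divides by $(1+z)^2$ and expands, whereas you multiply the target series by $(1+z)^2$ and compare.
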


\begin{proof}
 Clearly $\beta_0(Q/I_S)=1$. By Proposition \ref{prop:tensorwithKoszul}, we have
\begin{equation}\label{eq:gf-n4}
\sum_{i\ge0}\beta_i(T/J_S)z^i=\Big(\sum_{i\ge0}\beta_i(Q/I_S)z^i\Big)(1+z)^2.
\end{equation}
Using Theorem~\ref{thm:betti-ideal-2n}, the left-hand side of the above equation is given by

\begin{align*}
\sum_{d\ge0}\beta_d(T/J_S)\,z^d
&= 1 + \left(\binom{m}{2}-2m+5\right)z + \left(2\binom{m}{3}-(2m-5)-2\binom{m-2}{2}\right)z^2 \\& \quad
   + \sum_{d\ge3}\left(d\binom{m}{d+1}-2\binom{m-1}{d}\right)z^d\\[2mm]
&= 3+3z+z^2
   + \left[-2+\left(\binom{m}{2}-2m+2\right)z+\left(2\binom{m}{3}-2(m-2)-2\binom{m-2}{2}\right)z^2\right]\\
&\qquad + \sum_{d\ge3}\left(d\binom{m}{d+1}-2\binom{m-1}{d}\right)z^d\\[2mm]
&= 3+3z+z^2 + \sum_{d\ge0}\left(d\binom{m}{d+1}-2\binom{m-1}{d}\right)z^d\\[2mm]
&= 3+3z+z^2 + \sum_{d\ge0} d\binom{m}{d+1}z^d
   - 2\sum_{d\ge0}\binom{m-1}{d}z^d\\[2mm]
&= 3+3z+z^2 + \sum_{d\ge0} d\binom{m}{d+1}z^d - 2(1+z)^{m-1}\\[2mm]
&= 3+3z+z^2 + \left(m(1+z)^{m-1}-\frac{(1+z)^m-1}{z}\right) - 2(1+z)^{m-1}\\
&= 3+3z+z^2 + (m-2)(1+z)^{m-1}-\frac{(1+z)^m-1}{z}.
\end{align*}
In view of Equation \eqref{eq:gf-n4}, dividing the above equation by $(1+z)^2$, we get
\begin{align*}
\sum_{d\ge0}\beta_d(Q/I_S)\,z^d
&= \frac{1}{(1+z)^2}\sum_{d\ge0}\beta_d(T/J_S)\,z^d\\[2mm]
&= \frac{1}{(1+z)^2}\left[(3+3z+ z^2)+(m-2)(1+z)^{m-1}-\frac{(1+z)^m-1}{z}\right]\\[2mm]
&= \frac{(1+z)^2+z+2}{(1+z)^2}+ \frac{(m-2)(1+z)^{m-1}}{(1+z)^2}-\frac{(1+z)^m-1}{z(1+z)^2}\\[2mm]
&=1+\frac{z}{(1+z)^2}+\frac{2}{(1+z)^2}+(m-2)(1+z)^{m-3}-\frac{(1+z)^{m-2}-(1+z)^{-2}}{z}\\[2mm]
&= 1+\sum_{d\ge0}(-1)^d(d+2)z^d + \sum_{d\ge0}(m-2)\binom{m-3}{d}z^d
   -\sum_{d\ge0}\left[\binom{m-2}{d+1}+(-1)^d(d+2)\right]z^d
   \,\\[2mm]
&= 1+\sum_{d\ge0}\left[(m-2)\binom{m-3}{d}-\binom{m-2}{d+1}\right]z^d\\[2mm]
&= 1+\sum_{d\ge0}\left[(d+1)\binom{m-2}{d+1}-\binom{m-2}{d+1}\right]z^d\\[2mm]
&= 1+\sum_{d\ge0} d\binom{m-2}{d+1}\,z^d \\
&= 1+\sum_{d\ge 1} d\binom{m-2}{d+1}\,z^d.
\end{align*}
This completes the proof.
\end{proof}

We now compute the Betti numbers of the defining toric ideal of $\Gamma_m(2,n)$ for $n \geq 6$.

\begin{theorem}\label{thm:defining-ideal-for-2n}
Let $6\le n< m$ and  $S=\Gamma_m(2,n)$. Then $\beta_0(Q/I_S)=1$, and for every $d\ge1$,
\[
\beta_d(Q/I_S)=
d\binom{m-2}{d+1}
-\binom{m-3}{d-1}
+\binom{m-n+2}{d-n+4}.
\]
\end{theorem}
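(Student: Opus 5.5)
The plan follows the proof of Theorem \ref{thm:defining-ideal-for-24-25} exactly. For $S=\Gamma_m(2,n)$ the two nonzero Ap\'ery elements $a_2=2m+2$ and $a_n=2m+n$ are not minimal generators. So Proposition \ref{prop:tensorwithKoszul} applies with $r=2$ and gives
\[
\sum_{i\ge0}\beta_i(T/J_S)z^i=\Big(\sum_{i\ge0}\beta_i(Q/I_S)z^i\Big)(1+z)^2 .
\]
I would therefore first write the left-hand side in closed form using Theorem \ref{thm:betti-ideal-2n}(c), then divide by $(1+z)^2$ and read off coefficients.

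For the closed form, I would check that the general expression for $d\ge3$,
\[
d\binom{m}{d+1}-\binom{m}{d}-\binom{m-1}{d}+\binom{m-n+4}{d-n+4},
\]
matches the stated $\beta_1$ and $\beta_2$ after a correction supported in degrees $0,1,2$. At $d=0$ it gives $-1-1+0=-2$, whereas $\beta_0=1$, a correction of $3$. At $d=1$ it gives $\binom m2-m-(m-1)=\binom m2-2m+1$ (the last binomial vanishes since $n\ge6$), whereas $\beta_1=\binom m2-2m+4$, a correction of $3$. At $d=2$ I would compare $2\binom m3-\binom m2-\binom{m-1}2+\binom{m-n+4}{6-n}$ with the stated $\beta_2$; since $\binom m2=\binom{m-1}2+(m-1)$, I expect a correction of $1$. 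The correction is then $3+3z+z^2=(1+z)^2+z+2$, as in the $n=4,5$ case. Summing with the generating function identities from the proof of Theorem \ref{thm:defining-ideal-for-n} gives
\[
\sum_d\beta_d(T/J_S)z^d=3+3z+z^2+m(1+z)^{m-1}-\frac{(1+z)^m-1}{z}-\bigl((1+z)^m-1\bigr)-(1+z)^{m-1}+z^{n-4}(1+z)^{m-n+4}.
\]
Here the last term needs $n\ge6$, so that $z^{n-4}$ has positive degree and no extra low-degree terms appear.

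Next I would divide by $(1+z)^2$. The piece $\frac{(1+z)^2+z+2}{(1+z)^2}$ together with the $(1+z)^{-2}$ part of $\frac{(1+z)^m-1}{z(1+z)^2}$ collapses to $1$, exactly as before. The remaining pieces are $m(1+z)^{m-3}-\frac{(1+z)^{m-2}}{z}-(1+z)^{m-2}-(1+z)^{m-3}+z^{n-4}(1+z)^{m-n+2}$, plus the constant $1$ just obtained; the constant accounts for the $+1$ that makes $\beta_0(Q/I_S)=1$. Also $\frac{1}{(1+z)^{2}}$ times $1$ coming from the $+1$ inside $-((1+z)^m-1)$ must be tracked. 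To avoid sign slips, I would group $-\bigl((1+z)^m-1\bigr)$ with the $+1$ of the constant corrections before dividing.

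The coefficient of $z^d$ for $d\ge1$ should then be
\[
m\binom{m-3}{d}-\binom{m-2}{d+1}-\binom{m-2}{d}-\binom{m-3}{d}+\binom{m-n+2}{d-n+4}.
\]
Using $(m-2)\binom{m-3}{d}=(d+1)\binom{m-2}{d+1}$ and $\binom{m-3}{d}+\binom{m-3}{d-1}=\binom{m-2}{d}$, this simplifies to $d\binom{m-2}{d+1}-\binom{m-3}{d-1}+\binom{m-n+2}{d-n+4}$, the claimed formula.

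The main obstacle is bookkeeping: confirming that the low-degree correction is exactly $3+3z+z^2$, which requires the $d=2$ identity with the term $\binom{m-n+3}{6-n}$, and handling the $(1+z)^{-2}$ terms so that they cancel completely. As a final sanity check, I would verify the result numerically at $m=7$, $n=6$ against the $\beta_d(T/J_S)$ values from Theorem \ref{thm:betti-ideal-2n}.
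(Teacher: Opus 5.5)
Your route is the paper's: Proposition \ref{prop:tensorwithKoszul} with $r=2$, the low-degree correction $3+3z+z^2$, division by $(1+z)^2$, and the identity $(m-2)\binom{m-3}{d}=(d+1)\binom{m-2}{d+1}$. Your correction checks at $d=0,1,2$ are right, including $n=6$, where $\binom{m-n+3}{6-n}-\binom{m-n+4}{6-n}=0$. Your final coefficient also simplifies correctly to the claimed formula.

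There is, however, a bookkeeping slip in your displayed closed form. Your degree-$0$ correction of $3$ was measured against the value $-2=-\binom{m}{0}-\binom{m-1}{0}$ of the general expression at $d=0$. So the sum runs over all $d\ge0$, and the term must be $-\sum_{d\ge0}\binom{m}{d}z^d=-(1+z)^m$, not $-\bigl((1+z)^m-1\bigr)$. The latter is the $i\ge1$ identity from the proof of Theorem \ref{thm:defining-ideal-for-n}, where degree zero was handled by a separate correction. As displayed, your series has constant term $3+m-m+0-1=2\neq\beta_0(T/J_S)$.

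Consequently there is no ``$+1$ inside $-((1+z)^m-1)$'' to track. If you carried $\frac{1}{(1+z)^2}=\sum_{d\ge0}(-1)^d(d+1)z^d$ along as you propose, every coefficient would be off by $(-1)^d(d+1)$; for instance you would get $\beta_0(Q/I_S)=2$. Your stated coefficient $m\binom{m-3}{d}-\binom{m-2}{d+1}-\binom{m-2}{d}-\binom{m-3}{d}+\binom{m-n+2}{d-n+4}$ is exactly what the corrected term $-(1+z)^m$ produces. So the fix is simply to drop that $+1$ and the remark about tracking it.

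Likewise, $\frac{(1+z)^2+z+2}{(1+z)^2}+\frac{1}{z(1+z)^2}$ equals $1+z^{-1}$, not $1$. The $z^{-1}$ cancels the polar part of $-\frac{(1+z)^{m-2}}{z}$, so your coefficient extraction is unaffected. The paper avoids this by keeping $\frac{(1+z)^{m-2}-(1+z)^{-2}}{z}$ together as a genuine power series.
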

\begin{proof}
Clearly $\beta_0(Q/I_S)=1$. By Proposition \ref{prop:tensorwithKoszul}, we have
\begin{equation}\label{eq:gf-ngeq5}
\sum_{i\ge0}\beta_i(T/J_S)z^i=\Big(\sum_{i\ge0}\beta_i(Q/I_S)z^i\Big)(1+z)^2.
\end{equation}
Using Theorem~\ref{thm:betti-ideal-2n}, for $n\ge6$, the left-hand side of the above equation is given by

\begin{align*}
\sum_{d\ge0}\beta_d(T/J_S)\,z^d
&= 1 + \left(\binom{m}{2}-2m+4\right)z + \left(2\binom{m}{3}
    -2\binom{m-1}{2}
    -\binom{m-2}{1}
    +\binom{m-n+3}{6-n}\right)z^2\\
&\quad + \sum_{d\ge3}\left(d \binom{m}{d+1}- \binom{m}{d}-\binom{m-1}{d}+\binom{m-n+4}{d-n+4}\right)z^d\\
&=3+3z+z^2
+\sum_{d\ge0}\left(d \binom{m}{d+1}- \binom{m}{d}-\binom{m-1}{d}+\binom{m-n+4}{d-n+4}\right)z^d\\[2mm]
&=3+3z+z^2 + \sum_{d\ge0}d\binom{m}{d+1}z^d - (1+z)^m - (1+z)^{m-1} + z^{n-4} (1+z)^{m-n+4}\\
&= 3+3z+z^2 + \sum_{d\ge0}d\binom{m}{d+1}z^d - (2+z) (1+z)^{m-1} + z^{n-4} (1+z)^{m-n+4} \\[2mm]
&=3+3z+z^2
+\left(m(1+z)^{m-1}
-\frac{(1+z)^m-1}{z}\right)
-(2+z)(1+z)^{m-1}
+z^{\,n-4}(1+z)^{m-n+4}\\[2mm]
&=3+3z+z^2+(m-2-z)(1+z)^{m-1}
-\frac{(1+z)^m-1}{z}
+z^{\,n-4}(1+z)^{m-n+4}.
\end{align*}
In view of Equation \eqref{eq:gf-ngeq5}, dividing the above equation by $(1+z)^2$, we get
\begin{align*}
\sum_{d\ge0}\beta_d(Q/I_S)\,z^d
&=\frac{1}{(1+z)^2}\sum_{d\ge0}\beta_d(T/J_S)\,z^d\\[2mm]
&=\frac{1}{(1+z)^2}\left[3+3z+z^2+(m-2-z)(1+z)^{m-1}
-\frac{(1+z)^m-1}{z}
+z^{\,n-4}(1+z)^{m-n+4}\right]\\[2mm]
&=  1+ \dfrac{z+2}{(1+z)^2}+(m-2-z)(1+z)^{m-3}-\frac{(1+z)^{m-2}-(1+z)^{-2}}{z} + z^{n-4}(1+z)^{m-n+2} \\[2mm]
&=1+\sum_{d\ge0}(-1)^d(d+2)z^d+\sum_{d\ge0}\Big[(m-2)\binom{m-3}{d}-\binom{m-3}{d-1}\Big]z^d
- \\ &\qquad \sum_{d\ge0}\Big[\binom{m-2}{d+1}+(-1)^d(d+2)\Big]z^d
+\sum_{d\ge0}\binom{m-n+2}{d-n+4}z^d \\[2mm]
&=1+\sum_{d\ge 0}\left[(m-2)\binom{m-3}{d}
-\binom{m-3}{d-1}
-\binom{m-2}{d+1}
+\binom{m-n+2}{d-n+4}\right]z^d\\[2mm]
&=1+\sum_{d\ge0}\left[
d\binom{m-2}{d+1}
-\binom{m-3}{d-1}
+\binom{m-n+2}{d-n+4}
\right]z^d\\[2mm]
&=1+\sum_{d\ge1}\left[
d\binom{m-2}{d+1}
-\binom{m-3}{d-1}
+\binom{m-n+2}{d-n+4}
\right]z^d,
\end{align*}
where we used the identity
$(m-2)\binom{m-3}{d}=(d+1)\binom{m-2}{d+1}$.
\end{proof}

\section{Betti numbers of the residue field}
\label{sec:residue-field-gamma-n}

In this section, we compute the Betti numbers of the residue field $\KK$ over the semigroup ring corresponding to the Sally type numerical semigroups $\Gamma_m(n)$, where $3\leq n<m$. Let $S$, $R$, $T$, $Q$, $I_S$, and $J_S$ be as in Section \ref{sec:definingideals}. We begin by recalling the infinite Ap\'ery resolution of $\KK$ over $R=T/J_S$ constructed in \cite{gosd24}.  

\begin{construction}[{\cite[Definition 3.1]{gosd24}}]\label{cons:k-res}
    For $d\geq 1$, let $G_d$ be the free $R$-module with basis
    \[
    \{e_{\mathbf v}:\mathbf v=(v_1,\ldots,v_d)\in(\ZZ/m\ZZ)^d,
    \quad v_i\neq 0\text{ for }i>1\},
    \]
    and set $G_0=Re_{\emptyset}$. In particular,
    \begin{equation}
    \label{eq:residue-free-ranks}
    \rank(G_0)=1,
    \qquad
    \rank(G_d)=m(m-1)^{d-1}\quad\text{for }d\geq 1.
    \end{equation}
    For $\mathbf v=(v_1,\ldots,v_d),$ we write $\widehat{\mathbf v}=(v_1,\ldots,v_{d-1})$
    and define
    \[
    \tau_i\mathbf v
    =(v_1,\ldots,v_{i-1},v_i+v_{i+1},v_{i+2},\ldots,v_d),
    \]
    where the entries are interpreted modulo $m$. We use the convention that $e_{\mathbf u}=0$ whenever some
    entry $u_j$ with $j>1$ is zero. The differential $\partial_d:G_d\to G_{d-1}$ is
    \begin{equation}
    \label{eq:residue-apery-differential}
    \partial_d(e_{\mathbf v})
    =x_{v_d}e_{\widehat{\mathbf v}}
    +\sum_{i=1}^{d-1}(-1)^{d-i}y^{c_{v_i,v_{i+1}}}
    e_{\tau_i\mathbf v}.
    \end{equation}
    \end{construction}
    
\begin{theorem}[{\cite[Theorem 3.2]{gosd24}}]
    The complex
    \[
    \GG_\bullet:\quad
    0\longleftarrow G_0\longleftarrow G_1\longleftarrow G_2
    \longleftarrow\cdots
    \]
    is a free resolution of $\KK$ over $R$. This resolution is minimal iff $S$ has maximal embedding dimension. \qed
\end{theorem}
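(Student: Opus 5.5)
The statement is a result of \cite{gosd24}. To verify it directly, I would first check that $\GG_\bullet$ is a complex, then prove exactness via the $\KK[y]$-module structure of $R$, and finally read off minimality from the unit entries of the differential.

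\textbf{Complex.} The inputs are the relations $x_ix_j=y^{c_{i,j}}x_{i+j}$ in $R$ (from the generators of $J_S$) and the cocycle identity
\[
c_{i,j}+c_{i+j,k}=c_{j,k}+c_{i,j+k}.
\]
This identity holds because both sides equal $\frac1m(a_i+a_j+a_k-a_{i+j+k})$. Computing $\partial_{d-1}\partial_d(e_{\bv})$ from \eqref{eq:residue-apery-differential} gives three kinds of terms:
\begin{itemize}[leftmargin=*]
\item \emph{$x x$ terms,} from applying the two $x$-parts in succession;
\item \emph{$x$--$y$ cross terms,} where one $x$-part and one $\tau$-part are applied;
\item \emph{$\tau\tau$ terms.}
\end{itemize}
The $\tau_i\tau_j$ terms cancel in pairs, exactly as in a bar complex, using the cocycle identity for the overlapping case $|i-j|=1$. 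Most cross terms cancel similarly. The surviving cross term $\pm y^{c_{v_{d-1},v_d}}x_{v_{d-1}+v_d}$ cancels against $x_{v_d}x_{v_{d-1}}$ via the relation in $R$. Terms with a zero entry vanish by the convention $e_{\mathbf u}=0$; this is consistent because $c_{i,m-i}$ is the exponent in $x_ix_{m-i}=y^{c_{i,m-i}}x_0$. In the statement's notation $x_0=y$, so this product reduces to a pure power of $y$ times the shorter word.

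\textbf{Exactness.} I would use that $R$ is a free $\KK[y]$-module with basis $1,x_1,\ldots,x_{m-1}$ (the Ap\'ery basis $t^{a_i}$). Hence each $G_d$ is $\KK[y]$-free with basis $x_u e_{\bv}$, where $u\in\ZZ/m\ZZ$ and $x_0$ stands for $1$. Define a $\KK[y]$-linear map $s:G_{d-1}\to G_d$ by
\[
s(x_ue_{\bv})=e_{(\bv,u)}\quad\text{for }u\neq 0,\qquad s(e_{\bv})=0,
\]
with $d=1$ handled via $e_{\emptyset}$ and $\bv$ allowed to start with $0$. I would then compute $\partial s+s\partial$ on the basis. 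The leading part $x_ue_{\bv}$ is recovered exactly. The remaining terms involve either a higher power of $y$ or a word that has been shortened or merged. I would organise these by a filtration, by $y$-degree together with word length, or by the internal $\ZZ$-degree together with a suitable order. The goal is that $\partial s+s\partial=\mathrm{id}-N$ with $N$ strictly decreasing the filtration on $\bigoplus_{d\ge1}G_d$. Then $\mathrm{id}-N$ is invertible degreewise, since the grading is bounded below and the graded pieces are finite-dimensional, so the positive part is contractible. Together with $\operatorname{coker}(G_1\to G_0)=R/(y,x_1,\ldots,x_{m-1})=\KK$, this gives that $\GG_\bullet$ resolves $\KK$.

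I expect this bookkeeping to be the main obstacle: choosing the filtration so that every correction term really drops. The delicate terms are those where $c_{v_{d-1},u}=0$, since merging $v_{d-1}$ and $u$ then produces no power of $y$. My first attempt would filter by the total Ap\'ery weight $\sum a_{v_i}+a_u$, which strictly drops under any merge with $c=0$ because $a_i+a_j=a_{i+j}$ forces a shorter word. If this handling proves messy, my fallback is to reduce modulo the nonzerodivisor $y$. The Artinian ring $R/yR$ has the monomial-type multiplication $x_ix_j\in\{x_{i+j},0\}$, and there the homotopy can be checked directly; exactness then lifts to $R$ by the graded Nakayama lemma applied to the homology.

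\textbf{Minimality.} The entries of $\partial_d$ are the $x_{v_d}$, which lie in $\mathfrak m$, and the $\pm y^{c_{v_i,v_{i+1}}}$. So $\GG_\bullet$ is minimal iff $c_{i,j}>0$ for all $i,j$ with $i,j,i+j\neq0$, i.e.\ iff no relation $a_i+a_j=a_{i+j}$ holds. This is exactly the condition that $\{a_0,\ldots,a_{m-1}\}$ is a minimal generating set, i.e.\ that $S$ has maximal embedding dimension. For the converse I need that any $c_{i,j}=0$ with $i,j\neq0$ actually appears as an entry. It does, in $\partial_2(e_{(i,j)})$ and in every higher $\partial_d$ via words ending in $(i,j)$.
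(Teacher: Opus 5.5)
The paper gives no proof of this statement; it imports it from \cite{gosd24}. Your direct homotopy argument is therefore an independent route. It is viable, and easier than you expect, but you never carry out the key computation, and as set up it fails in degree $1$.

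\textbf{Degrees $d\geq 2$ and minimality.} No filtration is needed, because your $s$ satisfies $\partial_{d+1}s_d+s_{d-1}\partial_d=\mathrm{id}_{G_d}$ exactly.
\begin{itemize}
\item On $x_ue_{\bv}$ with $u\neq0$, the terms $\pm y^{c_{v_i,v_{i+1}}}e_{(\tau_i\bv,u)}$ with $i<d$ in $\partial e_{(\bv,u)}$ cancel those of $s(x_u\partial e_{\bv})$.
\item The last merge $-y^{c_{v_d,u}}e_{(\widehat{\bv},v_d+u)}$ cancels $s(x_ux_{v_d}e_{\widehat{\bv}})=y^{c_{u,v_d}}e_{(\widehat{\bv},u+v_d)}$. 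If $u+v_d=0$, both vanish: the first has an interior zero, and in the second $x_ux_{v_d}\in\KK[y]$ while $s(e_{\widehat{\bv}})=0$.
\item On $e_{\bv}$ you get $s(x_{v_d}e_{\widehat{\bv}})=e_{\bv}$, since $v_d\neq0$.
\end{itemize}
So the merges with $c=0$ that you flagged as the main obstacle cause no trouble at all. Your minimality paragraph is correct. The words $\widehat{\bv}$ and $\tau_i\bv$ are pairwise distinct because $v_j\neq0$ for $j\geq2$, so the entries of $M_d$ really are $x_{v_d}$ and $\pm y^{c_{v_i,v_{i+1}}}$.

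\textbf{The gap: degree $1$.} With $s(e_\emptyset)=0$ you get $(\partial_2s_1+s_0\partial_1)(e_{(0)})=s_0(y\,e_\emptyset)=0$. You also get $x_{-v}e_{(v)}\mapsto x_{-v}e_{(v)}-y^{c_{v,-v}}e_{(0)}$. Hence $\partial s+s\partial$ is not injective on $G_1$. No filtration can then write it as $\mathrm{id}-N$ with $N$ filtration-decreasing on $\bigoplus_{d\geq1}G_d$, so your invertibility claim is false there.

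The repair is to define $s_0$ only on $\operatorname{im}\partial_1=\mathfrak m$, which is $\KK[y]$-free on $y,x_1,\ldots,x_{m-1}$, by $y\mapsto e_{(0)}$ and $x_u\mapsto e_{(u)}$. Then $\partial_1s_0=\mathrm{id}_{\mathfrak m}$ and $\partial_2s_1+s_0\partial_1=\mathrm{id}_{G_1}$. Alternatively, argue directly: a $1$-cycle is congruent modulo boundaries to $f(y)e_{(0)}$, and $\partial_1(f(y)e_{(0)})=yf(y)e_\emptyset=0$ forces $f=0$.

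Your fallback has the same blind spot. If $\GG$ resolves $\KK$, then $H_1(\GG/y\GG)\cong\tor_1^R(\KK,R/yR)\cong\KK$, spanned by the class of $e_{(0)}$. So the reduced complex is not acyclic, and $H_1$ must be handled separately in the Nakayama lift.

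\textbf{A cleaner packaging.} Let $\bar R=R/\KK[y]$. The assignment $e_{(v_1,\ldots,v_d)}\mapsto 1\otimes[\bar x_{v_d}|\cdots|\bar x_{v_2}]\otimes x_{v_1}$, with $x_0=y$, is a chain isomorphism from $\GG_{\geq1}$, shifted down by one, onto the normalized bar resolution $R\otimes_{\KK[y]}\bar R^{\otimes\bullet}\otimes_{\KK[y]}\mathfrak m$ of $\mathfrak m$ relative to $\KK[y]$. Under this identification:
\begin{itemize}
\item the convention $e_{\mathbf u}=0$ for an interior zero is exactly the normalization, since $x_ix_{m-i}\in\KK[y]$;
\item the $\tau_1$-merges land in $\mathfrak m$;
\item the corrected $s$ is the standard contraction;
\item $\partial^2=0$ holds automatically.
\end{itemize}
In your hand check of $\partial^2=0$, the cancellation is not the naive bar-complex pairing once a merge creates an interior zero. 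The surviving overlapping term then cancels against a different $\tau\tau$ term, or against $x_{v_d}x_{v_{d-1}}e_{\widehat{\widehat{\bv}}}$ when the zero arises at the end. Both cancellations use $c_{k,0}=1$ and $c_{i,j}=c_{j,i}$.
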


Let $S=\Gamma_m(n)$ for $3\leq n<m$, and recall that for such an $S$, we have that $c_{ij}=0$ if and only if $i+j=n$ with $1 \leq i , j \leq n-1$. Let $M_d$ denote the matrix representing the differential $\partial_d$ in the Ap\'ery resolution $\GG_\bullet$ of the residue field $\KK$ over $T/J_S$, and let $\overline{M_d}$ denote its
reduction modulo $\mathfrak m$. We use a similar strategy as in the previous section, and make use of Lemma \ref{lem:alpha-gives-betti-nos} to extract the Betti numbers of $\KK$. 

We label the rows of $\overline{M_d}$ by vectors $\mathbf v=(v_1,\ldots,v_{d-1})$, where $v_1 \in \ZZ / m \ZZ$ and $v_j\in \ZZ / m \ZZ \setminus \{0\}$ for all $j \geq 2$, and call such vectors \emph{admissible}. If $v_p=n$, then we let $\mathbf v^{p,i,n-i}$ denote the word obtained by replacing the entry $v_p=n$ by the ordered pair $(i,n-i)$. For $d\geq 1$ and $i\geq 1$, let $A_{d,i}$ denote the number of rows of $M_d$ whose labels contain at least $i$ occurrences of $n$, that is, the rows of $M_d$ which contain at least $i$ \emph{label occurrences} of $n$. We set $A_{q,i}=0$ for $q\leq 0$.

Recall that $\alpha_d(\GG_\bullet)$ is defined to be the rank of $\overline{M_d}$. 

\begin{theorem}
\label{thm:residue-rank-formula}
For every $d\geq 1$,
\[
\alpha_d(\GG_\bullet)
=\rank(\overline{M_d})
=A_{d,1}-A_{d-1,2}+A_{d-2,3}-A_{d-3,4}+\cdots.
\]
\end{theorem}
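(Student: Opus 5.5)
The plan is to reduce everything to the combinatorics of how occurrences of $n$ are split. Three steps: identify $\overline{M_d}$ explicitly, express the row relations through $\overline{M_{d-1}}$, and then unwind the resulting recursion.

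\textbf{Step 1: the shape of $\overline{M_d}$.} Every $x_{v_d}$ (including $x_0=y$) lies in $\mathfrak m$. Moreover $c_{v_i,v_{i+1}}=0$ exactly when $1\le v_i,v_{i+1}\le n-1$ and $v_i+v_{i+1}=n$; note $c_{0,j}=1$. Hence
\[
\overline{\partial}_d(e_{\mathbf v})=\sum_{\substack{1\le i\le d-1\\ v_i+v_{i+1}=n,\ 1\le v_i,v_{i+1}\le n-1}}(-1)^{d-i}e_{\tau_i\mathbf v}.
\]
Read by rows, this says the following. The row of $\overline{M_d}$ labelled by an admissible word $\mathbf w$ of length $d-1$ has a $\pm1$ in column $\mathbf w^{p,i,n-i}$ for each position $p$ with $w_p=n$ and each $1\le i\le n-1$, and zeros elsewhere. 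In particular, rows whose labels contain no $n$ vanish. Distinct pairs $(p,i)$ give distinct columns, so each such row is nonzero. Thus $\overline{M_d}^{\,t}$ is a ``splitting'' map $\delta_d$ from the span of words of length $d-1$ containing at least one $n$ to the span of words of length $d$. I will compute $\alpha_d=\operatorname{rank}\delta_d$.

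\textbf{Step 2: relations among rows come from the previous differential.} Since $\overline{M_{d-1}}\,\overline{M_d}=0$, we have $\delta_d\circ\delta_{d-1}=0$, so every row of $\overline{M_{d-1}}$ gives a linear relation among the rows of $\overline{M_d}$. Only the entries of a row of $\overline{M_{d-1}}$ in columns whose labels contain $n$ matter here. A row with label $\mathbf u$ (length $d-2$) hits such columns only if $\mathbf u$ has at least two occurrences of $n$, because splitting one $n$ must leave another. The key claim is that $\ker\delta_d$ is spanned by these relations. More precisely, let $\delta'_{d-1}$ be $\delta_{d-1}$ restricted to labels with $\ge 2$ occurrences of $n$ and projected onto words still containing an $n$. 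I claim
\[
\ker\delta_d=\operatorname{im}\delta'_{d-1}.
\]
Iterating this, with $\delta^{(k)}$ the analogous map from labels with $\ge k$ occurrences to words with $\ge k-1$ occurrences, gives exact sequences
\[
\cdots\to\langle\text{length }d-3,\ \ge3\ n\text{'s}\rangle\to\langle\text{length }d-2,\ \ge2\ n\text{'s}\rangle\to\langle\text{length }d-1,\ \ge1\ n\rangle\xrightarrow{\delta_d}\langle\text{length }d\rangle.
\]
The leftmost terms vanish for length reasons, so the sequence is finite.

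\textbf{Step 3: exactness, and the expected obstacle.} Exactness of this sequence of vector spaces is the heart of the argument; I expect it to be the main obstacle, and it is the content of Lemma~\ref{lem:residue-word-exactness}. To prove it, I would filter by a fixed ``skeleton'': a word together with a subset of marked occurrences of $n$, the unmarked pieces being frozen. I would pass to the associated graded complex with respect to the number of $n$'s not being split. Over a fixed word $\mathbf w$ with $r$ occurrences of $n$, the graded pieces consist of choosing a subset of those occurrences to split, each in $n-1$ ways. With the alternating signs $(-1)^{d-i}$, this is an augmented simplicial chain complex of a join of $r$ discrete sets of size $n-1$, equivalently a tensor product of $r$ copies of the two-term complex $\KK^{n-1}\to\KK$ (which is surjective). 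Such a complex is exact away from the top, and a spectral-sequence (filtration) argument then lifts exactness back to the original complex. The delicate points are twofold. First, splitting an $n$ may create pairs summing to $n$ that could be re-merged elsewhere; the filtration must be chosen so that this is invisible on the graded level. Second, the position-$1$ entry may be $0$, which is never split.

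With exactness established, the Euler characteristic of the finite exact sequence gives
\[
\alpha_d=\dim\operatorname{im}\delta_d=A_{d,1}-A_{d-1,2}+A_{d-2,3}-\cdots.
\]
Equivalently, $\alpha_d=A_{d,1}-\operatorname{rank}\delta'_{d-1}$, and the formula follows by induction on $d$.
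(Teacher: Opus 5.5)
Your Steps 1 and 2 and the closing rank--nullity count are exactly the paper's argument. The row of $\overline{M_d}$ labelled $\bv$ is $\delta(\bv)$ for the splitting differential on $\WW$, row relations are $\delta$-cycles graded by the number of $n$'s, and exactness of the truncated complexes gives the alternating sum. If you simply cite Lemma~\ref{lem:residue-word-exactness}, as the paper does, the proof is complete. The gap is in your own sketch of that exactness, which is where all the content lies.

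\textbf{Where the sketch fails.} Your pieces (all splittings of a subset of the $n$'s in a fixed word $\bw$, giving $\bigotimes_{j=1}^r(\KK\to\KK^{n-1})$) are subcomplexes of $\WW$, but they are not disjoint. The word $(i,n-i,i)$ arises both from $(n,i)$ and from $(i,n)$. So $\WW$ is not their direct sum, and you never give a filtration whose associated graded is that direct sum. The ``delicate point'' you flag is the whole difficulty, not a detail.

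A concrete sign that something is missing: nothing in your sketch uses $n\geq 3$, yet the conclusion fails for $n=2$. There $\delta(2,1)=(1,1,1)=-\delta(1,2)$, so $(2,1)+(1,2)$ is a nonzero cycle in $W_{3,1}$ while $W_{2,2}=0$. Correspondingly, the rows $(2,1)$ and $(1,2)$ of $\overline{M_3}$ are dependent, so $\alpha_3<A_{3,1}$, contrary to the formula.

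\textbf{The missing idea.} The paper filters by the full expansion $E(\bv)$, obtained by replacing every $n$ with $1(n-1)$, ordered lexicographically with $1$ largest.
\begin{enumerate}[leftmargin=*, label=(\roman*)]
\item The splitting $n\mapsto 1(n-1)$ preserves $E$, and every other splitting strictly lowers it. So the associated graded keeps only $n\mapsto 1(n-1)$.
\item Because $n\geq 3$ forces $1\neq n-1$, occurrences of $1(n-1)$ in a fixed expansion $\mathbf u$ never overlap. Hence the words with $E(\bv)=\mathbf u$ correspond bijectively to subsets of $P(\mathbf u)$.
\item Each graded piece is then the augmented chain complex of a full simplex, which is exact when $P(\mathbf u)\neq\emptyset$.
\item A leading-term induction on $E$ lifts this exactness back to $\WW$ in degrees $t\geq 1$.
\end{enumerate}
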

\begin{proof}
By \eqref{eq:residue-apery-differential}, every row of $\overline{M_d}$ whose label does not contain the symbol $n$ is zero. Therefore, we need to find the dimension of the span of the $A_{d,1}$ rows containing at least one label occurrence of $n$. Such a row of $\overline{M_d}$ indexed by $\bv$ has unit entries in the columns indexed by 
\[
\{\bv^{p,i,n-i}\;|\;p \text{ such that } v_p=n, \;i=1,\ldots,n-1\}.
\] Moreover, the unit entry in the column indexed by $\bv^{p,i,n-i}$ is $(-1)^{d-p}$. Thus, we say that the row vector indexed by $\bv$ is 
\[
r_{\mathbf v}
\colonequals\sum_{\substack{p \\ v_p=n}}(-1)^{d-p}
\sum_{i=1}^{n-1}{\mathbf v^{p,i,n-i}}
\]
We now define the following complex $\WW_\bullet$ of $\KK$-vector spaces: given $\ell\geq 1$ and $t\geq 0$, let $W_{\ell,t}$ denote the $\KK$-vector space whose basis elements are admissible vectors
$\bw=(w_1,\ldots,w_{\ell-1})$ that contain \emph{exactly} $t$ occurrences of the symbol $n$. For $t\geq1$, define $
\delta_{\ell,t}:W_{\ell,t}\longrightarrow W_{\ell+1,t-1}$ as
\begin{equation}
\label{eq:residue-word-differential}
\delta_{\ell,t}(\bw)
=\sum_{\substack{p \\ w_p=n}}(-1)^{\ell-p}
\sum_{i=1}^{n-1}\mathbf w^{p,i,n-i}.
\end{equation}
A simple check yields $\delta^2=0$.
Observe that for $\bv$ as above, $\delta(\bv)=r_\bv$. Thus, any element in the kernel of $\delta: W_{d,t}\to W_{d+1,t-1}$ produces a relation between the rows of $\overline{M_d}$. Conversely, any relation between the rows of $\overline{M_d}$ can be separated into relations which involve only rows indexed by vectors with the same number of label occurrences of $n$. Each of these relations can be represented by an element of $\ker(\delta)$. 

Let $D_{\ell,i}=\bigoplus_{t\geq i}\delta_{\ell,t}$. By Lemma \ref{lem:residue-word-exactness}, the complex $\WW$ is exact. Since $\delta_{\ell,t}$ reduces the number of label occurrences of $n$ by exactly one, we get 
\[
\ker\left(
D_{\ell,i}:\bigoplus_{t\geq i}W_{\ell,t}
\longrightarrow
\bigoplus_{t\geq i-1}W_{\ell+1,t}
\right)
=
D_{\ell-1,i+1}\left(\bigoplus_{t\geq i+1}W_{\ell-1,t}\right).
\]
Thus, every relation among rows of $\overline{M_d}$ whose labels contain at least $i$ occurrences of $n$ is a linear combination of relations arising from rows of $\overline{M_{d-1}}$ whose labels contain at least $i+1$ occurrences of $n$.

We know that there are $A_{d,1}$ rows with at least one label occurrence of $n$.  Now, a linear relation among these rows is an element of the kernel of the corresponding word map $D_{d,1}$. As $\WW$ is exact, the space of all such relations is precisely the image of the space spanned by the rows of $M_{d-1}$ containing at least two label occurrences of $n$. By the Rank-Nullity theorem, 
\[
\alpha_d=A_{d,1}-\rank\left(D_{d-1,2}\right)
\]

Once again, as $\WW$ is exact, the kernel of $D_{d-1,2}$ is the image of the rows of $M_{d-2}$ containing at least three label occurrences of $n$. Repeated application of the Rank-Nullity theorem gives the alternating sum as in the statement, and hence the theorem follows.
\end{proof}

A key component of the above proof was the exactness of the complex $\WW$, which we now prove.
Given an admissible word $\mathbf v$, we define its \emph{full expansion}
$E(\mathbf v)$ to be the word obtained by replacing every occurrence of the symbol $n$ by the two-letter word $1(n-1)$. Also, we let $\vert\bv\vert_n$ denote the number of occurrences of $n$ in $\bv$.

Before moving to the next result about the complex $\mathbb W$, in the next remark we recall a well-known fact about simplicial chain complexes and their homology. For more details, we refer the reader to \cite[Chapter 2]{Hatcher}.

\begin{remark}\label{rem:homology-rem}
Let $\Delta$ be a nonempty simplicial complex. Its augmented
simplicial chain complex over $\KK$ is
\[
\cdots\to C_2(\Delta;\KK)
\xrightarrow{\partial_2}C_1(\Delta;\KK)
\xrightarrow{\partial_1}C_0(\Delta;\KK)
\xrightarrow{\varepsilon}\KK\to0,
\]
where $C_r(\Delta;\KK)$ is the $\KK$-vector space with basis
the oriented $r$-simplices, with one orientation chosen for
each simplex. For $r\geq1$, the boundary maps are given by
\[
\partial_r[v_0,\ldots,v_r]
=\sum_{i=0}^r(-1)^i
[v_0,\ldots,\widehat{v_i},\ldots,v_r],
\]
and $\varepsilon([v])=1$ for every vertex $v$.
The homology of this complex is the reduced simplicial
homology of $\Delta$. If $\Delta$ is a nonempty simplex,
then it is contractible, so its augmented simplicial
chain complex is exact.
\end{remark}

\begin{lemma} \label{lem:residue-word-exactness}
Let $\WW$ be the complex of $\KK$-vector spaces defined above. For every $\ell\geq 2$ and $t\geq 1$, we have $\ker\left(\delta:W_{\ell,t}\longrightarrow W_{\ell+1,t-1}\right) =
\im\left(\delta:W_{\ell-1,t+1}\longrightarrow W_{\ell,t}\right)$. 
\end{lemma}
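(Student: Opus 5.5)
The plan is to filter $\WW$ so that its associated graded complex splits into a direct sum of augmented chain complexes of full simplices, which are exact by Remark~\ref{rem:homology-rem}, and then lift exactness back to $\WW$.

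\textbf{Step 1: grading by full expansion.} For each basis word $\bw$ of $W_{\ell,t}$, its full expansion $E(\bw)$ is an admissible word of length $\ell-1+t$ with no occurrence of $n$. The differential $\delta$ raises length by one and lowers $t$ by one, so $\ell-1+t$ is constant along $\delta$. Hence $\WW$ splits as a direct sum of finite subcomplexes indexed by this total length, and it suffices to work inside one of them, where there are only finitely many full expansions $\mathbf u$.

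\textbf{Step 2: the filtration.} Order the words $\mathbf u$ of fixed length lexicographically. Consider a term $\bw^{p,i,n-i}$ of $\delta(\bw)$.
\begin{itemize}[leftmargin=*]
\item For $i=1$, we have $E(\bw^{p,1,n-1})=E(\bw)$.
\item For $2\le i\le n-1$, the words $E(\bw^{p,i,n-i})$ and $E(\bw)$ agree before the block coming from $v_p$. At that block they read $i$ versus $1$, so $E(\bw^{p,i,n-i})>E(\bw)$ lexicographically.
\end{itemize}
Let $\mathcal F_{\mathbf u}$ be the span of basis words $\bw$ with $E(\bw)\ge\mathbf u$. Then $\delta(\mathcal F_{\mathbf u})\subseteq\mathcal F_{\mathbf u}$, so this is a finite exhaustive filtration of subcomplexes. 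The associated graded differential keeps only the $i=1$ terms:
\[
\delta^{\mathrm{gr}}(\bw)=\sum_{p:\,w_p=n}(-1)^{\ell-p}\,\bw^{p,1,n-1}.
\]

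\textbf{Step 3: each graded piece is a simplex chain complex.} Fix $\mathbf u$. Let $P(\mathbf u)$ be the set of positions $q$ with $u_q=1$ and $u_{q+1}=n-1$. Since $n\ge3$ gives $1\ne n-1$, any two such adjacent pairs are disjoint.
\begin{itemize}[leftmargin=*]
\item The words $\bw$ with $E(\bw)=\mathbf u$ and $|\bw|_n=t$ correspond bijectively to $t$-element subsets $\sigma\subseteq P(\mathbf u)$. The correspondence collapses the chosen pairs to $n$.
\item Admissibility is automatic, since collapsing never creates a $0$ in positions $>1$.
\item Under this bijection, $\delta^{\mathrm{gr}}$ sends $\sigma$ to a signed sum of $\sigma\setminus\{q\}$ over $q\in\sigma$.
\end{itemize}
Listing $\sigma=\{q_1<\dots<q_t\}$ and writing $p$ for the position in $\bw$ of the $n$ at $q_j$, the sign $(-1)^{\ell-p}$ differs from the simplicial sign $(-1)^{j-1}$ by a factor depending only on $\ell$ and $\mathbf u$. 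The reason is that $p=q_j-(j-1)$, so $(-1)^{\ell-p}=(-1)^{\ell-q_j}(-1)^{j-1}$. The remaining factor $(-1)^{\ell-q_j}$ still depends on $q_j$. I would absorb it by rescaling each basis element $\sigma$ by $\prod_{q\in\sigma}(-1)^{q}$, together with a global sign depending on $\ell$.

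Checking that this rescaling produces exactly the simplicial boundary is the main obstacle. I expect the bookkeeping to work because $\delta^2=0$ forces the signs to be compatible. After the rescaling, the graded piece for $\mathbf u$ is the augmented chain complex of the full simplex on $P(\mathbf u)$, with $t$-subsets in homological degree $t-1$ and $t=0$ playing the role of $\KK$.

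If $P(\mathbf u)\ne\emptyset$, this complex is exact everywhere by Remark~\ref{rem:homology-rem}, including at $t=1$. If $P(\mathbf u)=\emptyset$, the piece lives only in $t=0$, which is outside the range $t\ge1$ in the lemma.

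\textbf{Step 4: lifting exactness.} The associated graded complex is exact at every $W_{\ell,t}$ with $t\ge1$, and the filtration is finite. A standard argument therefore gives exactness of $\WW$ at $W_{\ell,t}$ for $t\ge1$: either the spectral sequence of a finitely filtered complex, or induction on the filtration via the short exact sequences $0\to\mathcal F_{>\mathbf u}\to\mathcal F_{\ge\mathbf u}\to\mathrm{gr}_{\mathbf u}\to0$ and the long exact sequence. Concretely, given a cycle $z$, we subtract $\delta$ of a lift of a graded preimage of its leading component and induct on the leading $\mathbf u$. This yields $\ker\delta=\im\delta$ at $W_{\ell,t}$ for all $\ell\ge2$ and $t\ge1$.
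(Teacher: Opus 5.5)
Your proposal is correct and follows essentially the same route as the paper: filter each fixed expanded length by the full expansion $E(\bw)$, note that the associated graded differential keeps only the splittings $n=1+(n-1)$, identify each graded piece with the augmented chain complex of the simplex on $P(\mathbf u)$, and lift exactness by induction on the leading expansion. The differences are cosmetic. You use the natural lexicographic order, so the other splittings raise $E$ instead of lowering it as in the paper's order with $1$ largest. Your explicit rescaling $s(\sigma)=(-1)^{\sum_{q\in\sigma}q}g(|\sigma|)$ does work: the required compatibility reduces to $g(t)g(t-1)=(-1)^{\ell}$, which can be solved recursively because $\ell$ is determined by $t$ once the expansion is fixed.
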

\begin{proof}
    Observe that for every admissible word $\mathbf{v}$, the quantity $\vert E(\mathbf v)\vert=\vert\mathbf v\vert+\vert\mathbf v\vert_n$ is preserved by $\delta$. Consider any lexicographic order $<$ on the set of expanded words in which we have $1 >j$ for every $j \neq 1$. Then we see that $E(\mathbf v^{p,1,n-1})=E(\mathbf v)$, and
$E(\mathbf v^{p,i,n-i})<E(\mathbf v)$
for all $2\leq i\leq n-1$.

 The above lexicographical order thus induces a finite filtration on the subcomplex of each fixed expanded length. Its associated graded differential retains only the
terms that preserve the full expansion, and is therefore given by
\begin{equation}
\label{eq:residue-graded-differential}
\delta_0(\mathbf v)
=\sum_{\substack{p\\ v_p=n}}(-1)^{\ell-p}\mathbf v^{p,1,n-1}.
\end{equation}

Now, fix a word $\mathbf u$ having no occurrence of $n$, and let
\[
P(\mathbf u)=\{p \mid u_p=1\text{ and }u_{p+1}=n-1\}.
\]
Since $n\geq 3$, the symbols $1$ and $n-1$ are distinct, and hence no two occurrences of the subword $1(n-1)$ in $\mathbf u$ overlap. Every word $\mathbf v$ satisfying $E(\mathbf v)=\mathbf u$ is therefore obtained uniquely by choosing a subset $Q\subseteq P(\mathbf u)$ and replacing each occurrence of $1(n-1)$ indexed by $Q$ with the symbol $n$.

Let $\Delta_{\mathbf u}$ be the simplex with vertex set $P(\mathbf u)$. Under this identification, the basis elements are indexed by the faces $Q\subseteq P(\mathbf u)$ of $\Delta_{\mathbf u}$. The differential $\delta_0$ induces the map
\begin{equation}
\label{eq:residue-simplex-differential}
Q\longmapsto \sum_{p\in Q}\varepsilon(Q,p)\bigl(Q\setminus\{p\}\bigr),
\end{equation}
where $\varepsilon(Q,p)\in\{1,-1\}$. 
After rescaling the basis elements by appropriate signs,
\eqref{eq:residue-simplex-differential} agrees with the
augmented simplicial boundary map of $\Delta_{\mathbf u}$,
with word degree $t$ corresponding to simplicial degree
$t-1$. Thus, when $P(\mathbf u)$ is nonempty, this complex
is exact by Remark~\ref{rem:homology-rem}.
If $P(\mathbf u)=\emptyset$, then the complex is concentrated in degree zero, with the degree zero component generated by the word $\mathbf u$. Hence, in this case as well, the complex has no homology in positive degrees.

Now, suppose that $t \geq 1$ and $x = \sum a_j \mathbf v_j \in W_{\ell,t}$ is such that 
$\delta(x)=0$. Let $\mathbf u$ be the largest full expansion of the terms $\mathbf v_j$ appearing in
$x$, and let $x_{\mathbf u}$ denote the part of $x$ with full expansion
$\mathbf u$, i.e., $x_{\mathbf u} = \sum\limits_{\substack{j \\ E(\mathbf v_j) = \mathbf u}} a_j \mathbf v_j$. Since every term coming from a splitting other than
$n=1+(n-1)$ has smaller full expansion, from
$\delta(x)=0$, we get $
\delta_0(x_{\mathbf u})=0$.
Since $t\geq1$, we have $P(\mathbf u)\neq\emptyset$.
The exactness of the augmented simplicial chain complex therefore gives an element $y_{\mathbf u}\in W_{\ell-1,t+1}$ such that $x_{\mathbf u}=\delta_0(y_{\mathbf u})$.

Thus, we see that in the full expansion of $x-\delta(y_{\mathbf u})$, every term is smaller than $\mathbf u$. We may now repeat the above process with $x$ replaced by $x - \delta(y_{\mathbf u})$. Since there are only finitely many words of a fixed expanded length, this process terminates after finitely many steps and yields  $x \in \im(\delta)$. This completes the proof.
\end{proof}

For $d\geq 2$, let $N_{d,t}$ denote the number of rows of $M_d$ whose labels
contain exactly $t$ occurrences of $n$. The first entry of a row label has $m$
possible values, whereas each later entry has $m-1$ possible values. By
separating the cases in which the first entry is $n$ or not, we get  
\[
N_{d,t}
=\binom{d-2}{t-1}(m-2)^{d-1-t}
+(m-1)\binom{d-2}{t}(m-2)^{d-2-t}.
\]
Note that we have $A_{d,i}=\sum_{t=i}^{d-1}N_{d,t}$. This gives
\[
\begin{aligned}
\sum_{d\geq 1}\alpha_d(\GG_\bullet)z^d
&=\sum_{d\geq2}\sum_{t\geq1}N_{d,t}z^d
-\sum_{d\geq2}\sum_{t\geq2}N_{d,t}z^{d+1}
+\sum_{d\geq2}\sum_{t\geq3}N_{d,t}z^{d+2}-\cdots\\
&=\sum_{d\geq2}z^d\big(N_{d,1}+N_{d,2}(1-z)+N_{d,3}(1-z+z^2)+\cdots\big)\\
&=\frac{1}{1+z}
\sum_{d\geq2}\sum_{t\geq1}
N_{d,t}z^d\bigl(1-(-z)^t\bigr).
\end{aligned}
\]

Using the value of $N_{d,t}$ and applying the binomial theorem, we get
\[
\begin{aligned}
\sum_{t\geq1}N_{d,t}\bigl(1-(-z)^t\bigr)
={}&m(m-1)^{d-2}-(m-1-z)(m-2-z)^{d-2}.
\end{aligned}
\]

Therefore, we obtain
\begin{align*}
\sum_{d\geq1}\alpha_d(\GG_\bullet)z^d
&=\frac{1}{1+z}\sum_{d\geq2}z^d
\Bigl[
m(m-1)^{d-2}-(m-1-z)(m-2-z)^{d-2}
\Bigr] \\
&= \frac{z^2}{1+z}\left[
\frac{m}{1-(m-1)z}-\frac{m-1-z}{1-(m-2)z+z^2}
\right] \\ &= 
\frac{z^2(1+z)}
{\bigl(1-(m-1)z\bigr)
\bigl(1-(m-2)z+z^2\bigr)}.
\end{align*}

The Betti numbers of $\KK$ can now be obtained simply by using Lemma~\ref{lem:alpha-gives-betti-nos}.

\begin{theorem} \label{thm:residue-betti-gamma-n}
    Let $3 \leq n <m$ and $S=\Gamma_m(n)$, and let $R=\KK[S]$. Then the Poincar\'e series $\mathcal P_{\KK}^R(z) = \sum_{i\geq 0}\beta_i^R(\KK)z^i$ of the residue field $\KK$ is given by 
    \[
    \mathcal P_{\KK}^R(z) 
    =\frac{1+z}{1-(m-2)z+z^2}.
    \]
    Equivalently,
    \[
    \beta_0^R(\KK)=1,
    \qquad
    \beta_1^R(\KK)=m-1,
    \]
    and
    \[
    \beta_d^R(\KK)
    =(m-2)\beta_{d-1}^R(\KK)-\beta_{d-2}^R(\KK)
    \qquad\text{for }d\geq 2.
    \]
\end{theorem}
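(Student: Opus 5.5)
The plan is to apply Lemma \ref{lem:alpha-gives-betti-nos} to the infinite Ap\'ery resolution $\GG_\bullet$ of Construction \ref{cons:k-res}, and then sum everything at the level of generating functions. We already have both ingredients. The ranks satisfy $\rank(G_0)=1$ and $\rank(G_d)=m(m-1)^{d-1}$. The generating function of the ranks of the reduced differentials, computed just before the statement from Theorem \ref{thm:residue-rank-formula}, is
\[
\alpha(z)\colonequals\sum_{d\geq1}\alpha_d(\GG_\bullet)z^d=\frac{z^2(1+z)}{\bigl(1-(m-1)z\bigr)\bigl(1-(m-2)z+z^2\bigr)}.
\]
Note that $\alpha_1=0$, since all entries of $\partial_1$ lie in $\mathfrak m$; this matches the fact that the series starts at $z^2$. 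So $\beta_0^R(\KK)=1$ directly, because $\GG_\bullet$ is a resolution with $G_0=R$.

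For $d\geq1$, Lemma \ref{lem:alpha-gives-betti-nos} gives $\beta_d=\rank(G_d)-\alpha_d-\alpha_{d+1}$. Multiplying by $z^d$ and summing over $d\geq0$, with the $d=0$ term supplied separately, yields
\[
\mathcal P_\KK^R(z)=\sum_{d\geq0}\rank(G_d)z^d-\alpha(z)-\frac{\alpha(z)}{z}.
\]
This uses $\alpha_0=0$ and the fact that $\alpha_1=0$ makes the shift $\sum_{d\geq1}\alpha_{d+1}z^d=\alpha(z)/z$ exact. The rank series equals
\[
1+\frac{mz}{1-(m-1)z}=\frac{1+z}{1-(m-1)z}.
\]
The correction term is
\[
\alpha(z)\,\frac{1+z}{z}=\frac{z(1+z)^2}{\bigl(1-(m-1)z\bigr)\bigl(1-(m-2)z+z^2\bigr)}.
\]

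The remaining step is the algebraic simplification. Put everything over the common denominator $\bigl(1-(m-1)z\bigr)\bigl(1-(m-2)z+z^2\bigr)$ and factor out $1+z$. The numerator becomes
\[
(1+z)\Bigl[1-(m-2)z+z^2-z(1+z)\Bigr]=(1+z)\bigl(1-(m-1)z\bigr).
\]
The factor $1-(m-1)z$ therefore cancels, leaving $(1+z)/(1-(m-2)z+z^2)$ as claimed.

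The equivalent recursive description follows by clearing denominators. From
\[
\mathcal P_\KK^R(z)\bigl(1-(m-2)z+z^2\bigr)=1+z,
\]
comparing coefficients gives $\beta_0=1$ and $\beta_1-(m-2)\beta_0=1$, so $\beta_1=m-1$. For $d\geq2$ it gives $\beta_d=(m-2)\beta_{d-1}-\beta_{d-2}$.

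There is no real obstacle left at this stage. All the difficulty was in Theorem \ref{thm:residue-rank-formula} and the exactness Lemma \ref{lem:residue-word-exactness}, which are assumed here. The only care needed is bookkeeping at the low end of the index shift: $d=0$ is not covered by Lemma \ref{lem:alpha-gives-betti-nos}, and the shift relies on $\alpha_1=0$. The cancellation of the factor $1-(m-1)z$ serves as a consistency check on the earlier formula for $\alpha(z)$.
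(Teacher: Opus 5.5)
Your proposal is correct and is essentially the paper's own proof: apply Lemma \ref{lem:alpha-gives-betti-nos} to $\GG_\bullet$, subtract $\alpha(z)+\alpha(z)/z$ from the rank series $(1+z)/(1-(m-1)z)$, cancel the factor $1-(m-1)z$, and read off the recurrence from $\mathcal P_\KK^R(z)\bigl(1-(m-2)z+z^2\bigr)=1+z$. Your explicit handling of the $d=0$ term and of the index shift via $\alpha_1=0$ is slightly more careful than the paper's write-up, but the argument is the same.
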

\begin{proof}
    The recurrence follows simply by expanding the rational function expression of $\mathcal P_{\KK}^R(z)$ obtained as a power series. 
    
    We have $\sum_{d\geq 0}\rank(G_d)z^d = 1+ \sum_{d\geq 1} m(m-1)^{d-1}z^d =  \frac{1+z}{1-(m-1)z}$. To get the required expression of $\mathcal P_{\KK}^R(z) $, observe that by Lemma~\ref{lem:alpha-gives-betti-nos} and the expression for the power series of $\alpha_d(\GG_{\bullet})$ obtained earlier, we have
    \begin{align*}
        \mathcal P_{\KK}^R(z) 
        &=
        \sum_{d\geq 0}\rank(G_d)z^d - \sum_{d\geq 1}\alpha_d(\GG_\bullet)z^d - \dfrac{1}{z}\sum_{d\geq 1}\alpha_d(\GG_\bullet)z^d\\
        &= \frac{1+z}{1-(m-1)z} - \frac{z^2(1+z)}
{\bigl(1-(m-1)z\bigr)
\bigl(1-(m-2)z+z^2\bigr)} - \frac{z(1+z)}
{\bigl(1-(m-1)z\bigr)
\bigl(1-(m-2)z+z^2\bigr)} \\
&= \frac{1+z}{1-(m-2)z+z^2}.
    \end{align*} 
    This completes the proof.
\end{proof}

\begin{remark}
In particular, for a fixed multiplicity $m$, the Poincar\'e series
obtained in the theorem above is independent of the parameter $n$ as
long as $3\leq n<m$.
\end{remark}

\begin{theorem}[cf. {\cite[Subsection 4.2]{gosd24}}]\label{thm:Poincare-series-for-n=2}
    Let $m\geq4$, $S=\Gamma_m(2)$, and $R=\KK[S]$. Then the Poincar\'e series $\mathcal{P}_{\KK}^R(z)$ of the residue field $\KK$ is given by
    \[
    \mathcal{P}_{\KK}^R(z)=\frac{1+z}{1-(m-2)z}.
    \]
    Equivalently, $\beta_0^R(\KK)=1$ and $\beta_d^R(\KK)=(m-1)(m-2)^{d-1}$ for $d\geq 1$.
\end{theorem}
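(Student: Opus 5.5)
We follow the same strategy as for Theorem \ref{thm:residue-betti-gamma-n}: compute $\alpha_d(\GG_\bullet)=\rank(\overline{M_d})$ for the Ap\'ery resolution of Construction \ref{cons:k-res}, then apply Lemma \ref{lem:alpha-gives-betti-nos}. For $S=\Gamma_m(2)$ we have $c_{i,j}=0$ if and only if $i=j=1$ (since $i+j=2$ with $1\le i,j\le 1$). In $\overline{M_d}$, a row labelled $\bv=(v_1,\ldots,v_{d-1})$ therefore has unit entries only in the columns $\bv^{p,1,1}$ with $v_p=2$, i.e.\ those obtained by replacing one occurrence of the symbol $2$ by the pair $(1,1)$. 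The difference from the case $n\ge3$ is that the subword $11$ can overlap with itself, so the simplicial argument of Lemma \ref{lem:residue-word-exactness} does not apply directly. The rank of $\overline{M_d}$ must instead be computed by understanding the word complex $\WW$ with the single splitting $2\mapsto 11$.

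Fix a word $\mathbf u$ without the letter $2$ and decompose it into maximal runs of $1$'s. The words $\bv$ with $E(\bv)=\mathbf u$ (where $E$ replaces each $2$ by $11$) correspond to choosing, in each run of length $r$, a tiling by monominoes $1$ and dominoes $2$. The graded pieces are exact tensor products over the runs, so the complex attached to $\mathbf u$ is the tensor product over runs of the complex of tilings of a path of length $r$ with differential ``split a domino'' and alternating signs. This is the matching complex (independence complex of domino placements) of a path. Its homology is known to be a single copy of $\KK$ or zero, depending on $r \bmod 3$. Here there is no filtration step: $\delta$ preserves $E(\bv)$ exactly.

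Rather than tracking homology, the cleaner route is to use the Euler characteristic. Lemma \ref{lem:alpha-gives-betti-nos} gives
\[
\mathcal P^R_\KK(z)=\sum_d \rank(G_d)z^d-(1+z^{-1})\sum_d \alpha_d z^d.
\]
Since each $\overline{M_d}$ is a block sum over fixed $\mathbf u$, the Betti numbers equal the total homology of the reduced complex $\overline{\GG}$, which splits over expansions $\mathbf u$. We would therefore compute, for each run length, the homology of the path tiling complex. Each run of length $r$ contributes a homology class in one word length, determined by $r \bmod 3$. The final count then becomes a transfer-matrix generating function over words in $\ZZ/m\ZZ$ with first letter unrestricted and later letters nonzero.

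The main obstacle is the run-homology computation and its bookkeeping. We expect it to collapse the enumeration so that only words with no letter $2$ and no adjacent $11$, suitably interpreted, survive, giving $(m-1)(m-2)^{d-1}$. Intuitively, in each position after the first the letter $2$ is killed, leaving $m-2$ choices.

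If this bookkeeping becomes messy, the fallback is to cite \cite[Subsection 4.2]{gosd24}, where this case is treated. A second alternative is to verify Golodness directly. Combine Theorem \ref{thm:defining-ideal-for-n} with $n=2$,
\[
\mathcal P^Q_R(z)=1+m(1+z)^{m-2}-\tfrac{(1+z)^m-1}{z}+z(1+z)^{m-2},
\]
with the proven Golodness for $n=2$ (Theorem \ref{thm:mainC}), and check the algebraic identity
\[
\frac{(1+z)^{m-1}}{1-z(\mathcal P^Q_R-1)}=\frac{1+z}{1-(m-2)z},
\]
that is, $1-z(\mathcal P^Q_R-1)=(1+z)^{m-2}(1-(m-2)z)$. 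We would try this identity first: after multiplying out it is a routine binomial simplification, and it avoids the run-homology bookkeeping entirely.
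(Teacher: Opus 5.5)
The route you say you would try first is circular. The identity $1-z(\mathcal P^Q_R(z)-1)=(1+z)^{m-2}(1-(m-2)z)$ is true; it is the case $n=2$ of the computation in the proof of Theorem \ref{thm:golod}. By itself, though, it only evaluates Serre's bound, giving $\beta^R_d(\KK)\le (m-1)(m-2)^{d-1}$. Equality is exactly the statement that $R$ is Golod. In the paper, the Golodness of $\KK[\Gamma_m(2)]$ in Theorem \ref{thm:golod} is \emph{deduced from} the very theorem you are proving, and you give no independent argument for it. So this alternative assumes its conclusion. Your citation fallback is in fact all the paper does here: it refers to \cite[Subsection 4.2]{gosd24} and remarks that the $m=4$ argument there goes through verbatim for all $m$.

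Your main route is correct in outline. You rightly observe three things:
\begin{enumerate}[leftmargin=*, label=(\roman*)]
\item The self-overlap of $11$ destroys exactness of $\WW$. Indeed, running Theorem \ref{thm:residue-rank-formula} with $n=2$ would return $(1+z)/(1-(m-2)z+z^2)$, which is already wrong at $\beta_2$.
\item $\overline{\GG}$ splits into finite blocks indexed by full expansions $\mathbf u$.
\item Each block is a tensor product, over the maximal runs of $1$'s, of path tiling complexes.
\end{enumerate}
However, you stop exactly at the decisive step, and your guess that words with ``no letter $2$ and no adjacent $11$'' survive is not what happens.

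The missing computation is as follows. For a run of $r$ ones, the block is, up to duality, the augmented chain complex of the independence complex of a path on $r-1$ vertices. By Kozlov's computation, equivalently $\mathrm{Ind}(P_k)\simeq\Sigma\,\mathrm{Ind}(P_{k-3})$, this complex is contractible when $r\equiv 2\pmod 3$. Otherwise it has one-dimensional reduced homology, carried by the tilings with $\lfloor r/3\rfloor$ dominoes, i.e.\ in word length $r-\lfloor r/3\rfloor$. The position signs $(-1)^{d-p}$ are Koszul signs for concatenation of blocks, so K\"unneth applies. Hence a block $\mathbf u$ contributes exactly one class, in degree $|\mathbf u|-\sum\lfloor r/3\rfloor$, when no run has length $\equiv 2\pmod 3$, and contributes nothing otherwise.

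Finally, $r\mapsto r-\lfloor r/3\rfloor$ is a bijection from $\{r\ge 1 : r\not\equiv 2\pmod 3\}$ onto $\ZZ_{\ge 1}$. Shrinking each run accordingly therefore gives a bijection from the contributing $\mathbf u$ onto the admissible words of length $d$ avoiding the letter $2$. There are $(m-1)(m-2)^{d-1}$ such words, which is the claimed $\beta_d^R(\KK)$. With this filled in, you would have a self-contained proof inside the paper's framework, which is more than the paper itself provides.
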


\begin{remark}
    Although Subsection 4.2 of \cite{gosd24} assumes $m=4$, all the arguments follow through verbatim for larger $m$. 
\end{remark}

\begin{defn}
    Let $Q=\KK[x_1,\ldots,x_e]$ be non-negatively graded, $I$ be a homogeneous ideal of $Q$ such that $I\subset(x_1,\ldots,x_e)^2$, and $R=Q/I$. The graded algebra $R$ is called \emph{Golod} if 
    \[
    \mathcal{P}_\KK^R(z)=\frac{(1+z)^e}{1-z(\mathcal{P}^Q_R(z)-1)}.
    \]
\end{defn}

\begin{defn}\label{defn:Goloddefect}
    Let $Q$, $I$, and $R$ be as above. Let 
    \[
    \sum_{i\geq0}g_iz^i\colonequals\frac{(1+z)^e}{1-z(\mathcal{P}^Q_R(z)-1)}.
    \]
    Serre proved that $g_i\geq\beta_i^R(\KK)$ for all $i$. We define the $i^{\text{th}}$ \emph{Golod defect number} of $R$ to be $\mathcal{D}_i(R)\colonequals g_i-\beta_i^R(\KK)$, and the \emph{Golod defect index} of $R$ to be 
    \[
    \gdi(R)\colonequals\inf\big(\{i\mid \mathcal{D}_i(R)>0\}\cup\{\infty\}\big).
    \]
\end{defn}

\begin{remark}
    The invariant $\mathcal{D}_i(R)$ has been studied previously in \cite{NguyenVeliche}. We give it the terminology of Golod defect number. 
    
    As the name suggests, the Golod defect numbers and Golod defect index measure how far a ring $R$ is from being Golod, that is, how far and at what stage the Betti numbers of $\KK$ over $R$ fall below Serre's upper bound. If $\gdi(R)\geq N$, then the Betti numbers $\beta_i^R(\KK)$ attain Serre's upper bound for $i<N$. In particular, $R$ is Golod iff $\gdi(R)=\infty$. For more comments on these invariants, see Remark \ref{rmk:gdi}.
\end{remark}

\begin{theorem}\label{thm:golod}
    Let $m\geq4$, $2\leq n<m$, $S=\Gamma_m(n)$, and $R=\KK[S]$. Then $R$ is Golod iff $n=2$. Moreover, if $n\geq3$, then $\gdi(R)=n$ and $\mathcal{D}_n(R)=1$.
\end{theorem}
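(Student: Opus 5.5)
We compare the two explicit power series term by term. The embedding dimension is $e=m-1$, since exactly one element ($a_n$) of the Ap\'ery set is not a minimal generator. Theorem~\ref{thm:defining-ideal-for-n} gives
\[
\mathcal{P}_R^Q(z)-1=m(1+z)^{m-2}-\frac{(1+z)^m-1}{z}+z^{n-1}(1+z)^{m-n}.
\]
Set $P_1(z)\colonequals m(1+z)^{m-2}-\frac{(1+z)^m-1}{z}$, the $n$-independent part. The Serre bound is then
\[
\mathcal{S}(z)=\frac{(1+z)^{m-1}}{1-zP_1(z)-z^{n}(1+z)^{m-n}}.
\]

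\textbf{Step 1: the case $n=2$.} Here $z^{n-1}(1+z)^{m-n}=z(1+z)^{m-2}$. We will verify the identity
\[
1-zP_1(z)-z^2(1+z)^{m-2}=(1+z)^{m-2}\bigl(1-(m-2)z\bigr).
\]
Indeed, $1-zP_1=1-mz(1+z)^{m-2}+(1+z)^m-1=(1+z)^{m-2}\bigl((1+z)^2-mz\bigr)$. Subtracting $z^2(1+z)^{m-2}$ leaves $(1+z)^{m-2}(1+2z-mz)$, which is the claimed expression. Therefore $\mathcal{S}(z)=(1+z)/(1-(m-2)z)$. This agrees with Theorem~\ref{thm:Poincare-series-for-n=2}, so $R$ is Golod when $n=2$.

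\textbf{Step 2: the case $n\geq3$.} The same computation shows that the denominator of $\mathcal{S}$ is
\[
(1+z)^{m-2}\bigl((1+z)^2-mz\bigr)-z^n(1+z)^{m-n}.
\]
Dividing numerator and denominator by $(1+z)^{m-2}$ gives
\[
\mathcal{S}(z)=\frac{1+z}{1-(m-2)z+z^2-z^n(1+z)^{2-n}}.
\]
By Theorem~\ref{thm:residue-betti-gamma-n}, $\mathcal{P}_\KK^R=(1+z)/D$ with $D=1-(m-2)z+z^2$. Put $h(z)\colonequals z^n(1+z)^{2-n}=z^n+O(z^{n+1})$. Then
\[
\mathcal{S}-\mathcal{P}_\KK^R=\frac{1+z}{D-h}-\frac{1+z}{D}=\frac{(1+z)\,h}{D(D-h)}.
\]
Since $D(0)=1$, this difference is $z^n+O(z^{n+1})$. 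Hence $\mathcal{D}_i(R)=0$ for $i<n$ and $\mathcal{D}_n(R)=1$, so $\gdi(R)=n$ and $R$ is not Golod.

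\textbf{Where care is needed.} The only real obstacle is the algebraic simplification of $1-z(\mathcal{P}_R^Q-1)$, in particular the factorisation $1-zP_1=(1+z)^{m-2}\bigl((1+z)^2-mz\bigr)$. Before this, I will double-check that $e=m-1$ is the correct exponent in Serre's bound. I will also check that the lowest-order term of $h$ survives after dividing by $D(D-h)$. It does, because both $D$ and $D-h$ have constant term $1$.

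Two remaining points should be handled explicitly. First, the hypothesis $m\geq4$ is only needed so that $n=2$ and $n\geq3$ are both admissible cases, and so that Theorem~\ref{thm:Poincare-series-for-n=2} applies. Second, the characterisation ``Golod iff $n=2$'' follows by combining Step~1 with the fact that $\gdi(R)=n<\infty$ for $n\geq3$.
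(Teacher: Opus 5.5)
Your proposal is correct and follows essentially the same route as the paper: substitute the Poincar\'e series from Theorem~\ref{thm:defining-ideal-for-n} into Serre's bound with $e=m-1$, simplify, match the $n=2$ case against Theorem~\ref{thm:Poincare-series-for-n=2}, and for $n\geq3$ subtract the series of Theorem~\ref{thm:residue-betti-gamma-n} to obtain a difference of the form $z^n(1+z)/(\text{series with constant term }1)$. The only cosmetic difference is that you divide by $(1+z)^{m-2}$ and work with the power series $h=z^n(1+z)^{2-n}$. The paper instead divides by $(1+z)^{m-n}$ and keeps the polynomial denominator $(1+z)^{n-2}\bigl(1-(m-2)z+z^2\bigr)-z^n$. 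These are the same expression, since $(1+z)^{n-2}(D-h)$ equals that polynomial.
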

\begin{proof}
    The minimal number of generators of $S$ is $m-1$. 
    By Theorem \ref{thm:defining-ideal-for-n}, for $2\leq n<m$
    \begin{align*}
        \sum_{i\geq0}g_iz^i
        &\colonequals\frac{(1+z)^{m-1}}{1-z(\mathcal{P}^Q_R(z)-1)}\\
        &=\frac{(1+z)^{m-1}}{1-z\big(m(1+z)^{m-2}-\frac{(1+z)^m-1}{z}+z^{n-1}(1+z)^{m-n}\big)}\\
        &=\frac{(1+z)^{m-1}}{(1+z)^{m-2}\big(1-(m-2)z+z^2\big)-(1+z)^{m-n}z^n}\\
        &=\frac{(1+z)^{n-1}}{(1+z)^{n-2}\big(1-(m-2)z+z^2\big)-z^n} \quad \text{as }n\geq2.
    \end{align*}
    For $n=2$, the above expression readily simplifies to the expression of $\mathcal{P}_\KK^R(z)$ in Theorem \ref{thm:Poincare-series-for-n=2}, thus proving that $R$ is Golod.
    For $n\geq 3$, by Theorem \ref{thm:residue-betti-gamma-n},
    \begin{align*}
        \sum_{i\geq0}g_iz^i-\mathcal{P}_\KK^R(z)
        &=\frac{(1+z)^{n-1}}{(1+z)^{n-2}(1-(m-2)z+z^2)-z^n}-\frac{1+z}{1-(m-2)z+z^2}\\
        &=\frac{z^n(1+z)}{\bigg((1+z)^{n-2}\big(1-(m-2)z+z^2\big)-z^n\bigg)\big(1-(m-2)z+z^2\big)}.
    \end{align*}
    As $z$ does not divide the denominator in the above expression, the resulting power series has order $n$. Hence, $\mathcal{D}_i(R)=0$ for $i<n$. However, $\mathcal{D}_n(R)=1$ and hence, $R$ is not Golod for $n\geq3$.
\end{proof}

\section{Applications}\label{sec:applications}

In this section, we apply our results to two further families of numerical semigroups. First, we obtain families with the same Betti numbers by considering numerical semigroups in the relative interior of the same face of the Kunz cone. We then consider gluings with $\NN$, for which both Poincar\'e series can be expressed in terms of those of the original semigroup ring. 

\subsection{Numerical semigroups in the same face of the Kunz cone}

We briefly recall the description of the Kunz cone in terms of Ap\'ery sets; see \cite{bgmns25,gosd24}. For $m\geq2$, the \emph{Kunz cone} is the rational polyhedral cone
\[
\mathcal{C}_m\colonequals
\{(a_1,\ldots,a_{m-1})\in\RR_{\geq0}^{m-1}\ \mid 
a_i+a_j\geq a_{i+j}\text{ whenever }1\leq i,j\leq m-1\text{ and }i+j\not\equiv0\pmod m\},
\]
where the sum in the subscript is taken modulo $m$. If $S$ has multiplicity $m$ and $\operatorname{Ap}(S)=\{0,a_1,\ldots,a_{m-1}\}$, then $(a_1,\ldots,a_{m-1})\in\mathcal{C}_m$. Conversely, an integer point of $\mathcal{C}_m$ with $a_i>m$ and $a_i\equiv i\pmod m$ is the Ap\'ery vector of a numerical semigroup of multiplicity $m$. Two such vectors lie in the relative interior of the same face precisely when they satisfy the same equalities $a_i+a_j=a_{i+j}$. 

We say that $S$ is on the face of the Kunz cone if its Ap\'ery vector is on the said face.  Kunz \cite{Kunz} proved that the Betti numbers of the defining toric ideals are the same for numerical semigroups in the relative interior of each face. The same is true of the Betti numbers of the residue field by \cite[Proposition 3.5]{gosd24}. 

\begin{proposition}\label{prop:shifted-sally-face}
Let $H=\{n\}$ with $2\leq n<m$, or $H=\{2,n\}$ with $4\leq n<m$. For each integer $q\geq1$, set
\[
S^{(q)}\colonequals\langle m,\ qm+i\mid 1\leq i\leq m-1,\ i\notin H\rangle.
\]
Then $S^{(q)}$ has multiplicity $m$ and embedding dimension $m-|H|$, and its Ap\'ery set is $\{0,a_1^{(q)},\ldots,a_{m-1}^{(q)}\}$, where
\[
a_i^{(q)}=
\begin{cases}
qm+i & \text{if }i\notin H,\\
2qm+i & \text{if }i\in H.
\end{cases}
\]
Moreover, $S^{(q)}$ and $S^{(1)}$ lie in the relative interior of the same face of $\mathcal{C}_m$.
\end{proposition}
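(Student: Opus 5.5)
The plan is to first pin down $S^{(q)}$ completely, by exhibiting its elements in each residue class. Then I read off the Apéry set. Finally I compare the equalities $a_i+a_j=a_{i+j}$ for $q$ and for $q=1$.

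\textbf{Step 1: Apéry set.} Fix a residue $i\in\{1,\ldots,m-1\}$.
\begin{itemize}
\item If $i\notin H$, then $qm+i$ is a generator. Every nonzero element of $S^{(q)}$ other than multiples of $m$ is at least $qm+1$. Any element $\equiv i$ that is smaller than $qm+i$ would have to be a sum involving at least one generator $qm+j$, so it is at least $qm+1$. The only possibilities below $qm+i$ are then single generators $qm+j$ with $j<i$, whose residues differ from $i$. Hence $a_i^{(q)}=qm+i$.
\item If $i\in H$, no generator lies in the class of $i$. Any element $\equiv i$ must therefore use at least two generators $qm+j,\,qm+k$, so it is at least $2qm+2$ plus a multiple of $m$, which gives at least $2qm+i$ once we track the residue.
\end{itemize}
For the second case I must check that $2qm+i$ is actually attained:
\begin{itemize}
\item For $i=n$ with $n\ge 3$, take $j+k=n$ with $j,k\notin H$, e.g.\ $j=1$, $k=n-1$. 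This works when $n-1\notin H$. For $H=\{2,n\}$ we have $n\ge4$, so $n-1\ne 2$ and $n-1\notin H$.
\item For $i=2\in H$, use $1+1$.
\item For $H=\{2\}$, i.e.\ $n=2$, we again have $2=1+1$.
\end{itemize}
The lower bound needs one more remark: a sum of two generators has the form $2qm+(j+k)$. When $j+k\equiv i \pmod m$ with $j+k<m$, this equals $2qm+i$. When $j+k=i+m$, it is larger. Sums of three or more generators are larger still. So $a_i^{(q)}=2qm+i$ for $i\in H$.

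\textbf{Step 2: multiplicity and embedding dimension.} Multiplicity $m$ is clear, since all other generators exceed $m$. For minimality, each generator $qm+i$ with $i\notin H$ is the Apéry element in its class, so it is not $m$ plus an element of $S^{(q)}$. It is also not a sum of two nonzero Apéry elements, since any such sum is at least $2qm+2>qm+i$. Hence $\mathcal A(S^{(q)})$ has $1+(m-1-|H|)=m-|H|$ elements.

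\textbf{Step 3: same face.} By the description of faces, it suffices to show that for $1\le i,j\le m-1$ with $i+j\not\equiv0$, the equality $a_i^{(q)}+a_j^{(q)}=a_{i+j}^{(q)}$ holds if and only if it holds for $q=1$.

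Write $a_i^{(q)}=\epsilon_i qm+i$ with $\epsilon_i\in\{1,2\}$. Then
\[
a_i^{(q)}+a_j^{(q)}-a_{i+j}^{(q)}=(\epsilon_i+\epsilon_j-\epsilon_{i+j})\,qm+(i+j-\overline{i+j}),
\]
where $\overline{i+j}\in\{1,\ldots,m-1\}$ is the residue. The second term is either $0$ or $m$. Since $\epsilon_i+\epsilon_j\ge2\ge\epsilon_{i+j}$, the first term is nonnegative. The total is therefore zero exactly when $i+j<m$ and $\epsilon_i+\epsilon_j=\epsilon_{i+j}$, i.e.\ when $\epsilon_i=\epsilon_j=1$, $\epsilon_{i+j}=2$ and $i+j<m$. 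This condition does not depend on $q$.

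I expect the main care point to be Step 1's verification that the classes in $H$ are reached exactly at $2qm+i$, which uses $n\ge4$ when $2\in H$. The rest is bookkeeping.
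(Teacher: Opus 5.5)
Your proof is correct and follows essentially the same route as the paper. You determine the Ap\'ery set by the same case analysis on how many non-$m$ generators are used, with the same witnesses $(qm+1)+(qm+n-1)$ and $2(qm+1)$, and you settle the face statement via the same decomposition $a_i^{(q)}+a_j^{(q)}-a_{i+j}^{(q)}=qm(\epsilon_i+\epsilon_j-\epsilon_{i+j})+(i+j-\overline{i+j})$ into two nonnegative terms; your minimality check for the generating set is slightly more explicit than the paper's, which simply asserts it.
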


\begin{proof}
Since $m$ and $qm+1$ are generators, the generators have greatest common divisor $1$, and $S^{(q)}$ is a numerical semigroup of multiplicity $m$. Observe that the displayed generating set is minimal.

For $i\notin H$, the least element of residue $i$ is $qm+i$. For the missing residue/s, we have
\[
2qm+n=(qm+1)+(qm+n-1),
\]
and, when $H=\{2,n\}$, also $2qm+2=2(qm+1)$. An element of a missing residue involving at most one generator other than $m$ is impossible. If it involves two, their residues must sum to $i$ or $m+i$, and hence its value is at least $2qm+i$. If it involves at least three, its value is at least $3(qm+1)>2qm+i$. This proves the formula for the Ap\'ery set.

Write $h_i=1$ for $i\notin H$ and $h_i=2$ for $i\in H$. For $1\leq i,j\leq m-1$, let $k$ be the representative of $i+j$ modulo $m$ in $\{0,\ldots,m-1\}$. If $k\neq0$, then
\[
a_i^{(q)}+a_j^{(q)}-a_k^{(q)}
=qm(h_i+h_j-h_k)+(i+j-k).
\]
Both terms on the right are non-negative. Equality holds precisely when $i+j=k$, $i,j\notin H$, and $k\in H$. This condition is independent of $q$, so the Ap\'ery vectors satisfy exactly the same defining equalities of $\mathcal{C}_m$ for every $q\geq1$.
\end{proof}

\begin{corollary}\label{cor:shifted-sally-betti}
With the notation of Proposition \ref{prop:shifted-sally-face}, let $R^{(q)}=\KK[S^{(q)}]$, and let $Q$ be the presenting polynomial ring on the minimal generators of $S$. Then, for all $d\geq0$,
\[
\beta_d^Q(R^{(q)})
=\beta_d^Q(R^{(1)}),
\qquad
\beta_d^{R^{(q)}}(\KK)=\beta_d^{R^{(1)}}(\KK).
\]
In particular, the results of Theorem \ref{thm:mainA} hold for $S^{(q)}$. If $H=\{n\}$ and $m\geq4$, then
\[
\mathcal{P}_{\KK}^{R^{(q)}}(z)=
\begin{cases}
\displaystyle\frac{1+z}{1-(m-2)z} & \text{if }n=2,\\[6pt]
\displaystyle\frac{1+z}{1-(m-2)z+z^2} & \text{if }n\geq3.
\end{cases}
\]
Under these hypotheses, $R^{(q)}$ is Golod iff $n=2$. Moreover, if $n\geq3$, then $\gdi(R^{(q)})=n$ and $\mathcal{D}_n(R^{(q)})=1$.
\end{corollary}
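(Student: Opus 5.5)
The corollary follows from Proposition \ref{prop:shifted-sally-face} together with Kunz's invariance theorem and its residue-field analogue; no new homological computation is needed. By Proposition \ref{prop:shifted-sally-face}, $S^{(q)}$ and $S^{(1)}$ have the same multiplicity $m$ and lie in the relative interior of the same face of $\mathcal{C}_m$. Kunz's result then gives $\beta_d^Q(R^{(q)})=\beta_d^Q(R^{(1)})$ for all $d$. Here we identify the presenting rings: both have one variable $x_i$ for each minimal generator, indexed by residues $i\in\{0,\ldots,m-1\}\setminus H$, so $Q$ is the same ring up to regrading. Likewise \cite[Proposition 3.5]{gosd24} gives $\beta_d^{R^{(q)}}(\KK)=\beta_d^{R^{(1)}}(\KK)$.

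One should remark why only ungraded Betti numbers are claimed. The Ap\'ery resolutions of Constructions \ref{cons:ideal-res} and \ref{cons:k-res} depend on $S$ only through the numbers $c_{i,j}$. Moreover, the reductions $\overline{M_d}$ depend only on which $c_{i,j}$ vanish, and that is exactly the face data. Hence $\alpha_d$, and therefore the Betti numbers via Lemma \ref{lem:alpha-gives-betti-nos}, coincide. This also justifies directly that the face-invariance results apply.

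Next we identify $S^{(1)}$ with the semigroups studied earlier. For $q=1$, the generating set is $\{m\}\cup\{m+i : i\notin H\}$. This is exactly $\Gamma_m(n)$ when $H=\{n\}$ and $\Gamma_m(2,n)$ when $H=\{2,n\}$. Thus Theorem \ref{thm:mainA} transfers to $S^{(q)}$. When $H=\{n\}$ and $m\ge4$, the Poincar\'e series formulas transfer from Theorems \ref{thm:Poincare-series-for-n=2} and \ref{thm:residue-betti-gamma-n}.

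Finally, the Golod statements. Serre's bound $\frac{(1+z)^e}{1-z(\mathcal{P}^Q_R(z)-1)}$ depends only on $e=m-|H|$ and on $\mathcal{P}^Q_R(z)$. By the first step, both of these agree for $R^{(q)}$ and $R^{(1)}$. The series $\mathcal{P}_\KK^R(z)$ also agrees, so $\mathcal{D}_i(R^{(q)})=\mathcal{D}_i(R^{(1)})$ for every $i$. Theorem \ref{thm:golod} then yields Golodness iff $n=2$, and for $n\ge3$ it gives $\gdi=n$ and $\mathcal{D}_n=1$. The hypothesis $I\subset(x_1,\ldots,x_e)^2$ holds because the generating set is minimal. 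The only point requiring care is the applicability of the face-invariance results, which is settled by the vanishing-pattern argument above.
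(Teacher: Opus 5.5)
Your proposal is correct and follows the paper's proof. Proposition \ref{prop:shifted-sally-face} puts $S^{(q)}$ and $S^{(1)}\in\{\Gamma_m(n),\Gamma_m(2,n)\}$ in the relative interior of the same Kunz face. Face-invariance (the paper cites \cite[Corollary 4.3]{bgmns25} and \cite[Proposition 3.5]{gosd24}) then transfers both sets of Betti numbers, equal embedding dimension makes the Golod defect numbers agree, and Theorems \ref{thm:residue-betti-gamma-n}, \ref{thm:Poincare-series-for-n=2} and \ref{thm:golod} finish the argument. Your supplementary vanishing-pattern check is sound, but for $Q/I_S$ it only gives invariance of $\beta^T_d(T/J_S)$ directly; you need Proposition \ref{prop:tensorwithKoszul} with $r=|H|$ in both cases to descend to $\beta^Q_d(Q/I_S)$.
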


\begin{proof}
Observe that $S^{(1)}$ equals $\Gamma_m(n)$ or $\Gamma_m(2,n)$. The equalities follow from Proposition \ref{prop:shifted-sally-face}, \cite[Corollary 4.3]{bgmns25}, and \cite[Proposition 3.5]{gosd24}. Since the embedding dimensions also agree, so do the Golod defect numbers. The remaining assertions follow from Theorems \ref{thm:residue-betti-gamma-n}, \ref{thm:Poincare-series-for-n=2}, and \ref{thm:golod}.
\end{proof}

\begin{example}\label{ex:shifted-one-gap}
For $S=\Gamma_8(5)$ and $q=2$, Proposition \ref{prop:shifted-sally-face} gives
\[
S^{(2)}=\langle8,17,18,19,20,22,23\rangle.
\]
Then $\KK[S^{(2)}]$ has Betti numbers $(1,20,64,90,65,23,3)$, and
\[
\mathcal{P}_{\KK}^{\KK[S^{(2)}]}(z)=\frac{1+z}{1-6z+z^2},
\]
as for $\KK[S]$.
\end{example}

\subsection{Gluing with \texorpdfstring{$\NN$}{N}}

Let $S=\langle g_1,\ldots,g_e\rangle$ be minimally generated. Choose an integer $a\geq2$ and a positive element $b\in S\setminus\{g_1,\ldots,g_e\}$ such that $\gcd(a,b)=1$. The numerical semigroup
\[
\widetilde S\colonequals aS+b\NN
=\langle ag_1,\ldots,ag_e,b\rangle
\]
is a \emph{gluing of $S$ and $\NN$}. The condition that $b$ is not a minimal generator ensures that the displayed generating set of $\widetilde S$ is minimal.

\begin{proposition}\label{prop:gluing-with-N}
With the notation above, let $R=\KK[S]$ and $\widetilde R=\KK[\widetilde S]$. Let $Q=\KK[x_1,\ldots,x_e]$ and $\widetilde Q=Q[y]$ (with appropriately scaled grading) be the respective presenting polynomial rings. Then
\[
\mathcal{P}_{\widetilde R}^{\widetilde Q}(z)
=(1+z)\mathcal{P}_R^Q(z),
\qquad
\mathcal{P}_{\KK}^{\widetilde R}(z)
=\frac{\mathcal{P}_{\KK}^R(z)}{1-z}.
\]
Consequently, for $d\geq0$,
\[
\beta_d^{\widetilde Q}(\widetilde R)
=\beta_d^Q(R)+\beta_{d-1}^Q(R),
\qquad
\beta_d^{\widetilde R}(\KK)=\sum_{j=0}^d\beta_j^R(\KK),
\]
where $\beta_{-1}(R)=0$.
\end{proposition}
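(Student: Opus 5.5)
The plan is to realise $\widetilde R$ as a quotient of a polynomial ring by $R$'s ideal together with one extra binomial, and then to prove both identities by elementary change-of-rings arguments.

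\textbf{Step 1: the defining ideal.} Since $b\in S$, we may write $b=\sum_i c_i g_i$ with $c_i\ge 0$. Set $f=y^a-\prod_i x_i^{c_i}$. With the grading $\deg x_i=ag_i$ and $\deg y=b$, the polynomial $f$ is homogeneous. The standard gluing result (Rosales, Delorme) gives
\[
I_{\widetilde S}=I_S\widetilde Q+(f).
\]
I would check this directly. Modulo $f$, every monomial can be reduced so that the exponent of $y$ is less than $a$. Because $\gcd(a,b)=1$, two monomials $y^{r}\mathbf x^{\mathbf u}$ and $y^{s}\mathbf x^{\mathbf w}$ with $r,s<a$ and equal degree must have $r=s$. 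Hence they already agree in $S$, so the relation between them comes from $I_S$.

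\textbf{Step 2: the Betti numbers over $\widetilde Q$.} The element $f$ is monic in $y$, so it is a nonzerodivisor on $R[y]=\widetilde Q/I_S\widetilde Q$. We have $\widetilde R\cong R[y]/(f)$, and $\widetilde Q$ is flat over $Q$. It follows that the tensor product of a minimal $Q$-resolution of $R$ (extended to $\widetilde Q$) with the Koszul complex on $f$ resolves $\widetilde R$. This resolution is minimal because $f\in\mathfrak m^2$: we have $a\ge 2$, and the monomial $\prod_i x_i^{c_i}$ has total degree at least $2$ since $b$ is not a minimal generator. This gives $\mathcal P^{\widetilde Q}_{\widetilde R}=(1+z)\mathcal P^Q_R$, and comparing coefficients of $z^d$ gives the displayed Betti number formula.

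\textbf{Step 3: the residue field.} I would write $\widetilde R\cong R[y]/(f)$ with $R[y]$ a polynomial ring over $R$. Then
\[
\mathcal P^{R[y]}_\KK=(1+z)\,\mathcal P^R_\KK.
\]
Next, $f$ is a nonzerodivisor on $R[y]$ and $f\in\mathfrak m^2$. A classical result (Tate, Gulliksen; see Avramov, \emph{Infinite free resolutions}, Prop.~3.3.5) states that if $x$ is a nonzerodivisor in $\mathfrak n^2$ on a local or graded ring $A$, then
\[
\mathcal P^{A/(x)}_\KK=\frac{\mathcal P^{A}_\KK}{1-z^2}.
\]
Combining the two,
\[
\mathcal P^{\widetilde R}_\KK=\frac{(1+z)\,\mathcal P^R_\KK}{1-z^2}=\frac{\mathcal P^R_\KK}{1-z}.
\]
The coefficient formula $\beta_d^{\widetilde R}(\KK)=\sum_{j=0}^d\beta_j^R(\KK)$ follows by expanding $1/(1-z)$ as a geometric series.

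The main obstacle is verifying the hypotheses in Step 3: that $f$ lies in $\mathfrak n^2$, where $\mathfrak n$ is the homogeneous maximal ideal of $R[y]$, and that the graded version of the Tate–Gulliksen result applies. The first holds because $a\ge2$ and $\prod_i x_i^{c_i}\in\mathfrak m^2$. For the second, I would check that the positive grading lets the local argument transfer verbatim.
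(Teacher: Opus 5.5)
Your proposal is correct and matches the paper's proof. Both use the presentation $\widetilde R\cong R[y]/(y^a-M)$, with $y^a-M$ a regular element in the square of the maximal ideal, and then Avramov's Proposition 3.3.5 to get $\mathcal{P}_{\KK}^{\widetilde R}(z)=(1+z)\mathcal{P}_{\KK}^R(z)/(1-z^2)$. The only difference is that you check the gluing presentation directly and derive the first identity by tensoring with the Koszul complex on $f$, whereas the paper assumes the presentation and cites Gimenez--Srinivasan, Corollary 3.2, for the first identity.
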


\begin{proof}
The first formula follows from \cite[Corollary 3.2]{GimenezSrinivasan}, since $\KK[\NN]$ is a polynomial ring. Write $b=\sum_{i=1}^e u_i g_i$ and set $M=x_1^{u_1}\cdots x_e^{u_e}$. The gluing presentation gives
\[
\widetilde R\cong R[y]/(y^a-M).
\]
Since $a\geq2$ and $b\notin\{g_1,\ldots,g_e\}$, the monic polynomial $y^a-M$ is regular and lies in the square of the homogeneous maximal ideal. Thus \cite[Proposition 3.3.5(2)]{Avramov} gives
\[
\mathcal{P}_{\KK}^{\widetilde R}(z)
=\frac{\mathcal{P}_{\KK}^{R[y]}(z)}{1-z^2}
=\frac{(1+z)\mathcal{P}_{\KK}^R(z)}{1-z^2}
=\frac{\mathcal{P}_{\KK}^R(z)}{1-z}.
\]
The Betti number expressions follow by comparing coefficients.
\end{proof}

For either $S=\Gamma_m(n)$ or $S=\Gamma_m(2,n)$ in the parameter ranges considered here, both $m$ and $m+1$ belong to $\mathcal{A}(S)$. Therefore we may always take $a=2$ and $b=2m+1=m+(m+1)$, obtaining
\[
\widetilde S=2S+(2m+1)\NN.
\]
Proposition \ref{prop:gluing-with-N} and Theorem \ref{thm:mainA} produce the Betti numbers of the toric ideals of these gluings explicitly. For the one-gap family, we also obtain the following formulae for the residue field.

\begin{corollary}\label{cor:gluing-sally}
Let $m\geq4$, $2\leq n<m$, $S=\Gamma_m(n)$, and $\widetilde S=2S+(2m+1)\NN$. Then
\[
\mathcal{P}_{\KK}^{\KK[\widetilde S]}(z)=
\begin{cases}
\displaystyle\frac{1+z}{(1-z)(1-(m-2)z)} & \text{if }n=2,\\[6pt]
\displaystyle\frac{1+z}{(1-z)(1-(m-2)z+z^2)} & \text{if }n\geq3.
\end{cases}
\]
\end{corollary}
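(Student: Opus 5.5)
This corollary is a direct specialisation of Proposition \ref{prop:gluing-with-N} to $S=\Gamma_m(n)$ with $a=2$ and $b=2m+1$. The plan has three steps.

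\textbf{Step 1: the gluing hypotheses.} We must confirm that $a=2$ and $b=2m+1$ are admissible.
\begin{itemize}
\item Clearly $\gcd(2,2m+1)=1$.
\item Since $2\le n$, the element $m+1$ is a minimal generator of $\Gamma_m(n)$. So is $m$. Hence $b=m+(m+1)\in S$.
\item We need $b\notin\mathcal A(S)$. Every minimal generator of $\Gamma_m(n)$ lies in $\{m,\dots,2m-1\}$, and $2m+1$ exceeds all of them.
\end{itemize}
So $\widetilde S=2S+(2m+1)\NN$ is a genuine gluing of $S$ and $\NN$, and Proposition \ref{prop:gluing-with-N} applies to it.

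\textbf{Step 2: apply the proposition.} With $R=\KK[\Gamma_m(n)]$, Proposition \ref{prop:gluing-with-N} gives
\[
\mathcal P_\KK^{\KK[\widetilde S]}(z)=\frac{\mathcal P_\KK^R(z)}{1-z}.
\]

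\textbf{Step 3: substitute $\mathcal P_\KK^R(z)$.}
\begin{itemize}
\item For $n=2$ and $m\ge4$, Theorem \ref{thm:Poincare-series-for-n=2} gives $\mathcal P_\KK^R(z)=(1+z)/(1-(m-2)z)$.
\item For $3\le n<m$, Theorem \ref{thm:residue-betti-gamma-n} gives $\mathcal P_\KK^R(z)=(1+z)/(1-(m-2)z+z^2)$.
\end{itemize}
Dividing each by $1-z$ yields the two displayed formulas.

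No step is a real obstacle. The only point needing care is the non-generator condition on $b$, which is what makes the gluing relation $y^2-x_0x_1$ lie in the square of the maximal ideal.
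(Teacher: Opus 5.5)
Your proposal is correct and matches the paper's proof: the paper also notes that $m$ and $m+1$ are minimal generators, so $a=2$ and $b=2m+1$ give a valid gluing, and then combines Proposition \ref{prop:gluing-with-N} with Theorems \ref{thm:Poincare-series-for-n=2} and \ref{thm:residue-betti-gamma-n}. Your explicit check that $2m+1\notin\mathcal{A}(S)$, since all minimal generators lie in $\{m,\ldots,2m-1\}$, just makes precise a hypothesis the paper leaves implicit.
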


\begin{proof}
The formulae follow from Proposition \ref{prop:gluing-with-N} and Theorems \ref{thm:residue-betti-gamma-n} and \ref{thm:Poincare-series-for-n=2}.
\end{proof}

\begin{example}
For $S=\Gamma_8(5)$, the gluing is
\[
\widetilde S=2S+17\NN=\langle16,17,18,20,22,24,28,30\rangle.
\]
Then $\KK[\widetilde S]$ has Betti numbers $(1,21,84,154,155,88,26,3)$, and
\[
\mathcal{P}_{\KK}^{\KK[\widetilde S]}(z)
=\frac{1+z}{(1-z)(1-6z+z^2)}.
\]
\end{example}

\section{Further Questions and Conjectures}\label{sec:questions}

In \cite[Conjecture 6.4]{gsss25}, the authors conjecture that given $4\leq n_1\leq n_2<m-1$, the $i^{\text{th}}$ Betti numbers of $\Gamma_m(n_1,n_1+1)$ and $\Gamma_m(n_2,n_2+1)$ agree for all indices $i\leq n_1-3.$ We make a stronger conjecture, based on the techniques utilised in the proof of Lemma \ref{lem:alphad-2n}. Macaulay2 computations \cite{M2} verify the conjecture for small $m$ and $n$. 

\begin{conjecture}
    Let $4\leq n<m-1$, $S=\Gamma_m(n,n+1)$, and $R=\KK[S]$. Let $Q$ be a polynomial ring in $m-2$ variables and $I_S\subset Q$ be the defining ideal of $S$. Then the Betti numbers of $Q/I_S$ are $\beta_0=1$ and
    \[
    \beta_d=d\binom{m-2}{d+1}-2\binom{m-3}{d-1}+(n-1)\binom{m-n}{d-n+2}-(n-3)\binom{m-n-1}{d-n+1}\;\text{for } d\geq 1. \qedhere
    \]
\end{conjecture}

Although we compute the Betti numbers of the defining ideals of $\KK[\Gamma_m(2,n)]$, their Golodness remains to be analysed.

\begin{question}
    Let $4\leq n<m$, $S=\Gamma_m(2,n)$, and $R=\KK[S]$. 
    \begin{enumerate}[leftmargin=*, label=(\alph*)]
        \item When is $R$ Golod? If not, what is the smallest $i$ such that $\mathcal{D}_i(R)>0$?
        \item Is the Poincar\'e series $\mathcal{P}_\KK^R(z)$ a rational function?
        \item What are the Betti numbers of $\KK$ over $R$?
    \end{enumerate}
\end{question}

We end with some thoughts on the Golod defect numbers and Golod defect index introduced in Definition \ref{defn:Goloddefect}.

\begin{remark}\label{rmk:gdi}
    We believe that the Golod defect index might be an enlightening object of study for other classes of rings. Let $Q$, $I$, and $R$ be as in Definition \ref{defn:Goloddefect}. Recall that $R$ is Golod iff $\gdi(R)=\infty$. We note down some preliminary facts:
    \begin{enumerate}[label=(\alph*), leftmargin=*]
        \item By \cite[Proposition 4.2]{NguyenVeliche}, we have $\mathcal{D}_0(R)=\mathcal{D}_1(R)=\mathcal{D}_2(R)=0$. Hence, $\gdi(R)\geq3.$
        \item By \cite[Corollary 6.10]{Burke}, it follows that if $\text{codepth}(R)=c$, then $\gdi(R)\leq c+1$ or $\gdi(R)=\infty$.
        \item If $R$ is a complete intersection of embedding dimension $e$ and codimension $c\geq2$, then 
        \[
        \mathcal{P}_\KK^R(z)=\frac{(1+z)^e}{(1-z^2)^c} \quad \text{and} \quad \mathcal{P}^Q_R(z)=(1+z)^c.
        \]
        It follows that $\mathcal{D}_3(R)=\binom{c}{2}$ and therefore, $\gdi(R)=3$. Thus complete intersections take the least possible value of Golod defect index (excluding hypersufaces, which are always Golod).
        \item The converse of (c) does not hold, that is, there exist rings with Golod defect index three which are not complete intersections. For instance, the numerical semigroup ring $\KK[\Gamma_5(3)]$ is shown to have Golod defect index three in Theorem \ref{thm:golod}. However, the defining toric ideal of $\Gamma_5(3)$ has height three and a minimal generating set of size five (by Theorem \ref{thm:defining-ideal-for-n}). It follows that $\KK[\Gamma_5(3)]$ is not a complete intersection.
    \end{enumerate}
\end{remark} 

\begingroup
\renewcommand{\UrlFont}{\fontsize{8}{10}\selectfont\ttfamily}
\bibliographystyle{alpha}
\bibliography{references.bib}
\endgroup

\end{document}